\newcommand{\bbS}{\mathbb{S}}
\newcommand{\bbZ}{\mathbb{Z}}
\newcommand{\rmA}{\mathrm{A}}
\newcommand{\rmB}{\mathrm{B}}
\newcommand{\rmE}{\mathrm{E}}
\newcommand{\rmF}{\mathrm{F}}
\newcommand{\rmH}{\mathrm{H}}
\newcommand{\rmQ}{\mathrm{Q}}
\newcommand{\rmS}{\mathrm{S}}
\newcommand{\FQ}{\mathrm{FQ}}
\newcommand{\FR}{\mathrm{FR}}
\newcommand{\ab}{\mathrm{ab}}
\newcommand{\st}{\mathrm{st}}
\newcommand{\Sets}{\mathbf{Sets}}
\newcommand{\Racks}{\mathbf{Racks}}
\newcommand{\Quandles}{\mathbf{Quandles}}
\newcommand{\colim}{\mathrm{colim}}
\newcommand{\hofib}{\mathrm{hofib}}
\newcommand{\Tor}{\mathrm{Tor}}
\newcommand{\id}{\mathrm{id}}
\newcommand{\gr}{\mathrm{gr}}
\newtheorem{theorem}{Theorem}
\newtheorem{lemma}[theorem]{Lemma}
\newtheorem{proposition}[theorem]{Proposition}
\newtheorem{corollary}[theorem]{Corollary}
\theoremstyle{definition}
\newtheorem{definition}[theorem]{Definition}
\newtheorem{examples}[theorem]{Examples}
\newtheorem{example}[theorem]{Example}
\newtheorem{remark}[theorem]{Remark}
\newtheorem*{basstrack}{Abstract}
\newtheorem*{outline}{Outline}
\newtheorem*{notation}{Notation}
\numberwithin{theorem}{section}
\numberwithin{equation}{section}
\begin{document}


\title{\bf The homotopy types of free racks and quandles}

\author{Tyler Lawson and Markus Szymik}

\newdateformat{mydate}{\monthname~\twodigit{\THEYEAR}}
\date{\mydate\today}

\maketitle


\begin{basstrack}
We initiate the homotopical study of racks and quandles, two algebraic structures that govern knot theory and related braided structures in algebra and geometry. We prove analogs of Milnor's theorem on free groups for these theories and their pointed variants, identifying the homotopy types of the free racks and free quandles on spaces of generators. These results allow us to complete the stable classification of racks and quandles by identifying the ring spectra that model their stable homotopy theories. As an application, we show that the stable homotopy of a knot quandle is, in general, more complicated than what any Wirtinger presentation coming from a diagram predicts.
\end{basstrack}


\section{Introduction}

Racks and quandles form two algebraic theories that are closely related to groups and symmetry. A{\it~rack}~$R$ has a binary operation~$\rhd$ such that the left-multiplications~$s\mapsto r\rhd s$ are automorphism of~$R$ for all elements~$r$ in~$R$. This means that racks bring their own symmetries. All {\it natural} symmetries, however, are generated by the canonical automorphism~$r\mapsto r\rhd r$~(see~\cite[Thm.~5.4]{Szymik:1}). A{\it~quandle} is a rack for which the canonical automorphism is the identity. Every group defines a quandle via conjugation~\hbox{$g\rhd h = ghg^{-1}$}, and so does every subset closed under conjugation. The most prominent applications of these algebraic concepts so far are to the classification of knots, first phrased in terms of quandles by Joyce~\cite[Cor.~16.3]{Joyce} and Matveev~\cite[Thm.~2]{Matveev}. The utility of these algebraic notions when studying the geometry of braids, branched covers, mono\-dromy, and singularities is becoming more and more recognized~\cite{BGP-MR,Brieskorn,EVW,Randal-Williams,Yetter}, as is their interest for the theory of Hopf algebras, quantum groups, Yang--Baxter equations, and related Lie theory~\cite{AG,Drinfeld,Etingof+Grana, ESS}. This paper works out the homotopical foundations for the theories of racks, quandles, and their based variants, with the goal of a more conceptual understanding of the construction of these structures and their invariants.

To state our results, let~$X$ be a set, and let~$\rmF(X)$ denote the free group generated by~$X$. If~$X$ is a space, then so is~$\rmF(X)$, and Milnor proved that in the case of a CW-complex the space~$\rmF(X)$ has the same homotopy type as~\hbox{$\rmF(X)\simeq\Omega\Sigma X_+$}, the loop space of the suspension of~$X$ with a disjoint base point~(see~\cite[Thm.~1]{Milnor}
or~\cite[V.6]{Goerss+Jardine}). The first aim of this text is to prove analogous results for the free quandle functor~$\FQ$ and the free rack functor~$\FR$. The following result summarizes Theorems~\ref{thm:FQtype} and~\ref{thm:FRtype} from the main text.

\begin{theorem}\label{thm:Milnor}
For every space~$X$, the free simplicial quandle~$\FQ(X)$ sits in a homotopy fibration sequence
\[
\FQ(X)\longrightarrow X\times\rmS^1\longrightarrow X\times\rmS^1\!/X
\]
of spaces. The free simplicial rack~$\FR(X)$ comes with a natural homotopy equivalence
\[
\FR(X)\simeq\Omega\Sigma X_+\times X.
\]
\end{theorem}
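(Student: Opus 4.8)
The plan is to prove the two assertions by understanding free racks and quandles structurally, using the known description of free racks in terms of free groups and then transporting homotopy information through simplicial/level-wise arguments.

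The plan is to deduce both statements from a single algebraic input---the structure theorem describing free racks and quandles in terms of the free group---together with Milnor's equivalence $\rmF(X)\simeq\Omega\Sigma X_+$, applied degreewise to the free \emph{simplicial} group and then realized.

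First I would treat the rack, which is essentially bookkeeping. The free rack on a set has underlying set $\rmF(X)\times X$, naturally in $X$: the associated (enveloping) group of $\FR(X)$ is the free group $\rmF(X)$, and $\FR(X)$ is the free $\rmF(X)$-set on $X$. Since $\FR$ is computed degreewise on a simplicial set, this gives a natural isomorphism of simplicial sets $\FR(X)\cong\rmF(X)\times X$ with $\rmF(X)$ the free simplicial group. Milnor's theorem identifies $\rmF(X)\simeq\Omega\Sigma X_+$ naturally, and since the second factor is an untwisted product of spaces this yields $\FR(X)\simeq\Omega\Sigma X_+\times X$ at once.

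The quandle is the substantial part. The canonical automorphism $\phi\colon r\mapsto r\rhd r$ generates a $\bbZ$-action on $\FR(X)$ which is free, commutes with the $\rmF(X)$-action, and satisfies $\FQ(X)=\FR(X)/\bbZ$; on underlying sets $\FQ(X)\cong\coprod_{x\in X}\rmF(X)/\langle x\rangle$, the conjugates of the generators inside $\rmF(X)$. I would realize the asserted sequence as the Borel fibration
\[
\FQ(X)\longrightarrow\FQ(X)_{h\rmF(X)}\longrightarrow B\rmF(X)
\]
for the residual $\rmF(X)$-action. The total space I would compute by commuting homotopy orbits: because $\FR(X)$ is a free $\rmF(X)$-set with $\FR(X)_{h\rmF(X)}\simeq X$, and because $\phi$ acts freely and commutes with $\rmF(X)$, one gets
\[
\FQ(X)_{h\rmF(X)} \simeq \FR(X)_{h(\rmF(X)\times\bbZ)} \simeq \bigl(\FR(X)_{h\rmF(X)}\bigr)_{h\bbZ} \simeq X_{h\bbZ} \simeq X\times\rmS^1 .
\]
The crucial point is that $\phi$ induces the \emph{trivial} action on the orbit space $X=\FR(X)/\rmF(X)$, since it moves only the group coordinate; this is what trivializes the circle family $X_{h\bbZ}\simeq X\times B\bbZ$ and is exactly where one uses that the canonical automorphism is a single global symmetry (cf.\ \cite[Thm.~5.4]{Szymik:1}). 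For the base, Milnor gives $B\rmF(X)\simeq\Sigma X_+$, and a direct cofiber computation gives $X\times\rmS^1\!/X\simeq\Sigma X_+$, so it remains to check that the base map of the Borel fibration is homotopic to the quotient map $X\times\rmS^1\to X\times\rmS^1\!/X$ under this identification.

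The hard part will be carrying out these homotopy-orbit manipulations rigorously at the \emph{simplicial} level: verifying that the degreewise free actions make strict quotients model homotopy quotients, that geometric realization preserves the Borel fibration and the relevant iterated homotopy orbits, and that the two identifications match the base map with the quotient map. The algebraic inputs feed in cleanly, but the homotopical transport of the orbit decomposition into an honest fibration of spaces is where the care is needed.
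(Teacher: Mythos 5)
Your argument for $\FR(X)$ is the paper's argument essentially verbatim: the natural isomorphism $\FR(X)\cong\rmF(X)\times X$ of simplicial sets plus Milnor's theorem applied degreewise to the free simplicial group. For $\FQ(X)$ you take a genuinely different, though closely related, route. The paper starts from the given map $X\times\rmS^1\to X\times\rmS^1\!/X$, which for discrete $X$ is $\coprod_{x}\rmB\langle x\rangle\to\rmB\rmF(X)$, reads off its homotopy fiber $\coprod_{x}\rmF(X)/\langle x\rangle$ by covering-space theory, identifies this with Joyce's conjugacy-class model of $\FQ(X)$, and then passes to general $X$ via the Bousfield--Friedlander Fibre Square Theorem applied to the level-wise pullback squares (the bases being level-wise connected). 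You instead build the sequence from the fiber outward, as the Borel fibration of the $\rmF(X)$-action on $\FQ(X)$, computing the total space by commuting homotopy orbits of the commuting $\rmF(X)$- and $\bbZ$-actions on $\FR(X)$; this is correct and has the merit of explaining conceptually where the circle factor and the base come from. Two points need more care than your sketch gives them. First, your ``crucial point'' is slightly misstated: the residual $\bbZ$-action is trivial on the strict orbit set $X$ but only \emph{homotopically} trivial on the homotopy orbit space $\FR(X)_{h\rmF(X)}\simeq E\rmF(X)\times X$, where it acts by $(f,x)\mapsto(x^{-1}f,x)$; equivalently, the combined $\rmF(X)\times\bbZ$-action on $\FR(X)$ is not free --- the stabilizer of $(g,x)$ is the infinite cyclic group generated by $(gx^{-1}g^{-1},1)$ --- and it is exactly these stabilizers that produce $\rmS^1$, so pinning down $X_{h\bbZ}\simeq X\times\rmS^1$ naturally requires the canonical contraction of $E\rmF(X)$. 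Second, your route incurs the extra obligation of matching the Borel base map with the quotient map; discharging it amounts to observing that on the component indexed by $x$ the map is $\rmB\langle x\rangle\to\rmB\rmF(X)$, the inclusion of the $x$-th circle into the wedge, at which point you have essentially rederived the paper's covering-space computation. Finally, for the passage from discrete to simplicial $X$ you should name a tool: the paper invokes the Fibre Square Theorem, while your version could instead rely on the fact that degreewise Borel constructions for simplicial group actions realize to fibration sequences over connected bases --- but some such input must be cited rather than only flagged.
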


Theorems~\ref{thm:free_pointed_rack},~\ref{thm:free_fixed_rack}, and~\ref{thm:free_pointed_quandle} in the main text are variants of this result for pointed situations that we shall also need later on. We remark that the appearance of homotopy fibers in the statement of these results should not come as a surprise. The quandle of a knot~$K$ is closely related to paths from the knot to the base-point in the exterior~$\rmS^3 \setminus K$: more specifically, the quandle of a knot is the set of components of the homotopy fiber of the inclusion of the boundary torus into the knot complement~(compare~\cite[Sec.~12]{Joyce} and~\cite[\S6]{Matveev}). This observation alone prompts a proper homotopical investigation of the theories of racks and quandles, and we initiate that here.


One of the applications that our analogs of Milnor's theorem have is toward a deeper understanding of the homological invariants of racks and quandles. Indeed, these two structures have already been studied using homological methods, first using explicit chain complexes~\cite{CJKLM,FRS:3,FRS:4}, and then in the more versatile context of Quillen's homotopical algebra~\cite{Quillen:HomotopicalAlgebra,Szymik:3}, leading to new homological knot invariants~\cite{Szymik:2} and new computations of rack homology~\cite{Lebed--Szymik}. Quillen homology arises from a derived version of{\it~abelianization}, and the abelian group objects in the categories of quandles and racks are known to be given by the modules over the rings~\hbox{$\Quandles^\ab=\bbZ[\rmA^{\pm}]$} of Laurent polynomials and~\hbox{$\Racks^\ab=\bbZ[\rmA^{\pm},\rmE]/(\rmE^2-\rmE(1-\rmA))$}, respectively. We give a new description of the latter in Proposition~\ref{prop:pullback_of_rings}.

The reader may wonder if a{\it~generalized} homology theory, such as a flavor of K-theory or bordism, might not be better suited to define invariants. We pursue this thought for the~{\it universal} generalized homology theory: stable homotopy. Stable homotopy arises from a derived version of{\it~stabilization}, and our Theorems~\ref{thm:free_pointed_rack} and~\ref{thm:free_pointed_quandle} allow us to give similar descriptions of the ring spectra~$\Quandles_\star^\st$ and~$\Racks_\star^\st$ that model the stable objects in the categories of pointed simplicial quandles and racks. Recall that in ring spectra the initial object is the sphere spectrum~$\bbS$ rather than the ring~$\bbZ$ of integers.

\begin{theorem}\label{thm:ring_spectra}
There is an equivalence 
\[
\Quandles_\star^\st\simeq\bbS[\rmA^{\pm1}]
\]
of associative ring spectra. The spectrum~$\Racks_\star^\st$ also has the homotopy type of a wedge of~$0$--spheres, and it sits inside a homotopy pullback diagram
\[
\xymatrix{
\Racks^\st_\star \ar[r] \ar[d] &
\bbS[\rmA^{\pm1}] \ar[d] \\
\bbS[\rmA^{\pm1}] \ar[r] &
\bbS
}
\]
of associative ring spectra. Both morphisms out of the Laurent polynomial spectrum~$\bbS[\rmA^{\pm1}]$ are given by~\hbox{$\rmA\mapsto1$}.
\end{theorem}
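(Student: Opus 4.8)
The plan is to deduce the theorem from the pointed Milnor-type computations together with the general principle that the ring spectrum governing the stabilization of a pointed simplicial algebraic theory is the spectrification of its free functor. In this setting the stable objects form modules over a ring spectrum $\mathcal{C}^\st_\star$ whose underlying spectrum is assembled from the reduced free pointed $\mathcal{C}$-objects on the spheres $\rmS^n$, with unit and multiplication inherited from the monad structure on the free functor. Under this dictionary $\pi_0\mathcal{C}^\st_\star$ recovers the abelianized ring $\mathcal{C}^\ab$: for quandles $\pi_0=\Quandles^\ab=\bbZ[\rmA^{\pm1}]$, and for racks $\pi_0=\Racks^\ab$, the pullback of Proposition~\ref{prop:pullback_of_rings}. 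The task is therefore to upgrade these $\pi_0$-statements to equivalences of ring spectra.

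First I would treat the quandle case. Applying the pointed free-quandle computation (Theorem~\ref{thm:free_pointed_quandle}) to the spheres and passing to the colimit defining $\Quandles^\st_\star$, the canonical automorphism $r\mapsto r\rhd r$ contributes a single invertible class — the $\rmS^1$-factor visible in the fibration of Theorem~\ref{thm:Milnor} — so that the free pointed simplicial quandle on one generator stabilizes to the discrete group $\bbZ$. Its spectrification is then the wedge of $0$-spheres $\bigvee_{\bbZ}\bbS$, and I would identify the monad multiplication with concatenation of these winding classes, exhibiting $\Quandles^\st_\star$ as the spherical group ring $\bbS[\bbZ]=\bbS[\rmA^{\pm1}]$ with $\rmA$ the generator. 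Associativity and unitality are forced by the corresponding properties of the monad, and the comparison map is an equivalence of ring spectra because it is one on underlying spectra.

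For racks I would use the splitting $\FR(X)\simeq\Omega\Sigma X_+\times X$ of Theorem~\ref{thm:Milnor} in its pointed form (Theorem~\ref{thm:free_pointed_rack}). After stabilizing, the factor $\Omega\Sigma X_+$ reproduces, via the James splitting and exactly as in the quandle case, a copy of $\bbS[\rmA^{\pm1}]$, while the extra factor $X$ contributes a second, independent copy; consequently the underlying spectrum of $\Racks^\st_\star$ is again a wedge of $0$-spheres. There are two natural ring homomorphisms $\Racks^\st_\star\to\bbS[\rmA^{\pm1}]$ onto the Laurent spectrum — one induced by the quotient to the quandle theory — and I would check that both become the augmentation $\rmA\mapsto1$ after the map $\bbS[\rmA^{\pm1}]\to\bbS$, which is the commutativity of the asserted square. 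To prove the square is a homotopy pullback I would compare it, after spectrification, with the discrete pullback of Proposition~\ref{prop:pullback_of_rings}: all four corners are wedges of $0$-spheres, the induced map on $\pi_0$ is the isomorphism $\Racks^\ab\cong\bbZ[\rmA^{\pm1}]\times_{\bbZ}\bbZ[\rmA^{\pm1}]$, and a connectivity comparison compatible with the splittings on all of $\pi_*$ upgrades this to an equivalence of ring spectra.

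The main obstacle I anticipate is multiplicative rather than additive. The underlying-spectrum statements (wedges of $0$-spheres) follow fairly directly from the Milnor-type equivalences and standard stable splittings, but promoting these to equivalences of \emph{associative ring} spectra — and, for racks, verifying that the square commutes and is a homotopy pullback at the level of ring structures rather than merely underlying spectra — requires tracking the monad multiplication through the stabilization and confirming its compatibility with the James splitting. That the answer is forced on $\pi_0$ by the abelianization is reassuring, but the genuine content lies in ruling out higher multiplicative corrections.
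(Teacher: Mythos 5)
There is a genuine gap in the rack half of your argument. You assert that there are ``two natural ring homomorphisms $\Racks^\st_\star\to\bbS[\rmA^{\pm1}]$'' and identify only one of them (the quotient to the quandle theory). The paper's key idea, which your proposal is missing, is the introduction of an auxiliary Lawvere theory: racks pointed at a \emph{fixed} element in the sense of Definition~\ref{def:fix}. This theory has its own free-model description (Proposition~\ref{prop:free_fixed_rack}), its own Milnor theorem (Theorem~\ref{thm:free_fixed_rack}), and its stabilization is the second copy of $\bbS[\rmA^{\pm1}]$ (Theorem~\ref{thm:fixedrackspectrum}, realizing $\rmE\mapsto 0$ on $\pi_0$, while the quandle leg realizes $\rmE\mapsto 1-\rmA$). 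The entire square of ring spectra is then obtained in one stroke from the strictly commutative square of forgetful functors among $\Sets_\star$, $\Quandles_\star$, $\Racks_{\mathrm{fixed}}$, and $\Racks_\star$, with $\Sets_\star^\st\simeq\bbS$ by Barratt--Priddy--Quillen. Without this, you have no construction of the second leg --- the universal property of $\bbS[\rmA^{\pm1}]$ only produces maps \emph{out} of it, not into it, and the additive splitting $\FR(X)\simeq\Omega\Sigma X_+\times X$ is a splitting of spaces, not of rack-theoretic structure, so it does not hand you ring maps. Likewise, ``checking that both composites are $\rmA\mapsto1$'' cannot substitute for exhibiting a commuting square of ring spectra; the paper gets the commutativity for free from the square of theories. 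Once the square exists, your plan for verifying the pullback property (all corners are wedges of $0$--spheres; the $\pi_0=\rmH_0$ square is the biCartesian square of Proposition~\ref{prop:pullback_of_rings}; conclude by comparing with the homotopy pullback) does match the paper's.

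A smaller issue concerns the quandle case. Your additive computation (universal covers of $\rmS^n\times\rmS^1\to\rmS^{n+1}\vee\rmS^1$, a split fibration, colimit to $\rmQ$ of a wedge of $0$--spheres) is the paper's argument. But ``identifying the monad multiplication with concatenation of winding classes'' is exactly the step you should avoid: the paper instead invokes the universal property of the spherical group ring --- a ring map $\bbS[\rmA^{\pm1}]\to R$ is the same as a unit in $\pi_0R$ --- to produce a ring map $\bbS[\rmA^{\pm1}]\to\Quandles_\star^\st$ inducing an isomorphism on $\pi_0\cong\bbZ[\rmA^{\pm1}]$, which is then automatically an equivalence because both sides are wedges of $0$--spheres. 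This removes the need to track the monad multiplication through the stabilization, which is the obstacle you correctly flag at the end but do not resolve.
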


This result summarizes Theorem~\ref{thm:quandlespectrum}, Proposition~\ref{prop:rackspectrum}, and Theorem~\ref{thm:pullback_of_ring_spectra} from the main text. The proof builds on Theorem~\ref{thm:Milnor} and the above-mentioned Proposition~\ref{prop:pullback_of_rings} in the case of racks. It also involves the description of the ring spectra belonging to other, related theories of racks, such as the one in Theorem~\ref{thm:fixedrackspectrum}. Theorem~\ref{thm:ring_spectra} implies the presence of {\it commutative} ring spectra structures on~$\Racks^\st_\star$ and~$\Quandles_\star^\st$~(Proposition~\ref{prop:comm}), which is by no means automatic, and false for other theories. The result also allows us to give an elementary description of the stable objects entirely in terms of ordinary spectra~(Remark~\ref{rem:mod_cats}). 

Building on Theorem~\ref{thm:ring_spectra} and its surrounding results, we can substantially improve our understanding of homological and, more generally, stable invariants of racks and quandles. For instance, homology operations for racks and quandles were studied earlier~(see~\cite{Clauwens,Niebrzydowski+Przytycki}). Our computations automatically determine the ring of natural stable homotopy operations of racks and quandles as the homotopy ring of the spectra appearing in Theorem~\ref{thm:ring_spectra}~(see~\cite[4.11]{Schwede:Topology} and our Remark~\ref{rem:stable_homotopy_operations}). Needless to say, these operations involve the stable homotopy groups of spheres. 

A more substantial consequence of Theorem~\ref{thm:ring_spectra} is Theorem~\ref{thm:homology_from_stable_homotopy}: the homology of a rack or quandle is isomorphic to the ordinary homology of its stable homotopy type. This should be compared with the homology of groups, for instance, which can likewise be described as the homology of the suspension spectra of their classifying spaces. One thing that makes the homology of groups so complicated is the fact that the classifying space, or a free resolution, cannot immediately be obtained from a presentation of the group. The only immediately accessible object is the presentation~$2$--complex, and this serves as a classifying space only if it is aspherical.

In the final section, we apply our techniques to knot quandles. Knot complements {\it are} aspherical and modeled by the presentation~$2$--complexes of the Wirtinger presentations. These facts make computing the homology of knot groups a triviality~(see Proposition~\ref{prop:p2c} for the underlying homotopy theory). In contrast, we demonstrate that {\it no} Wirtinger presentation of a knot quandle can be used to compute its quandle homology directly~(Theorem~\ref{thm:presentations}).


\begin{outline}
In Section~\ref{sec:racks_and_quandles}, we review the algebraic theories of racks and quandles and exhaustively discuss variants with chosen base-points, including free models for them. We recall abelian racks and abelian~(Alexander) quandles. We give a new characterization of the rack ring, which models abelian racks, and another exhaustive discussion of the pointed variants. Abelianization naturally leads to Quillen homology, via derived functors, and this theory is briefly summarized at the end of that section as well. The central Section~\ref{sec:Milnor} contains the statements and proofs of our analogs of Milnor's theorem for racks, quandles, and pointed variants, identifying the homotopy types of the free models generated by a space. These results allow us to describe explicitly the stable homotopy theory, a natural and canonical refinement of the homology theory. We begin Section~\ref{sec:stable} with a review of the necessary generalities from stable homotopical algebra before determining the ring spectra that model the stable homotopy theories of pointed simplicial quandles and racks. As applications, we prove that the homology of a rack or quandle is isomorphic to the ordinary homology of its stable homotopy type, and in the final Section~\ref{sec:knots}, we show that this implies that we cannot, in general, compute the stable homotopy type of a knot quandle directly from a Wirtinger presentation.
\end{outline}


\begin{notation}
Let~$T$ be an algebraic theory such as the theory of racks or the theory of quandles. We will write~$T^\ab$ for the ring such that the category of abelian~$T$--models is equivalent to the category of~$T^\ab$--modules. This gives a functor~$T\mapsto T^\ab$ from the category of algebraic theories to the category of rings. If~$M$ is a~$T$--model, we will write~$M^\ab$ for its abelianization, which can be thought of as a~$T^\ab$--module. This gives a functor~\hbox{$M\mapsto M^\ab$} from the category of~$T$--models to the category of~$T^\ab$--modules. (In particular, the type of the symbol~$X^\ab$ will depend on the type of the symbol~$X$.) We will use similar notation for stabilization: the functors~$T\mapsto T^\st$ from pointed theories~$T$ to ring spectra and~$M\mapsto M^\st$ from~$T$--models to~$T^\st$--module spectra.
\end{notation}


\section{Racks and quandles}\label{sec:racks_and_quandles}

In this section, we collect all the purely algebraic definitions and results about racks and quandles that we need in the following. Most of these are standard, and we will refer to~\cite{Dehornoy, Fenn+Rourke, Joyce, Krueger, Szymik:1} for those, but some of these results are new, and we will provide proofs for them. For our homotopical purposes, and in particular for stabilization, we need to work in a pointed context. In Section~\ref{sec:pointed}, we carefully discuss which kinds of distinguished elements in racks and quandles are appropriate for these purposes. We give explicit models for the free pointed racks and quandles in Propositions~\ref{prop:free_pointed_quandle} and~\ref{prop:free_pointed_rack}. In Section~\ref{sec:abelianization}, we review the rings that underlie the theories of abelian (or Alexander) racks and quandles. For quandles, this is well-known to be the ring of Laurent polynomials. For racks, the usual description is more complicated, and our Proposition~\ref{prop:pullback_of_rings} is a new categorical characterization of this ring. In Proposition~\ref{prop:pointedAlexander}, we show that we can neglect base-points when it comes to abelianization and homology.

\subsection{Free racks and quandles}

We begin by fixing the terminology and notation for racks and quandles.

\begin{definition}
A {\it rack}~$(R,\rhd)$ is a set~$R$ together with a binary operation~$\rhd\colon R\times R\to R$ such that, for every element~$x$ in the set~$R$, the left multiplication~$y\mapsto x\rhd y$ is an automorphism of~$(R,\rhd)$. 
\end{definition}

By definition, we have a potentially large supply of automorphisms of any rack. The natural automorphisms are generated by the {\it canonical} automorphism~\hbox{$y\mapsto y\rhd y$}~(see~\cite[Thm.~5.4]{Szymik:1}). This leads to the following definition.

\begin{definition}
A {\it quandle} is a rack such that its canonical automorphism is the identity. 
\end{definition}

\begin{examples}\label{ex:conjugation_quandle}
If~$Q$ is a subset of a group that is invariant under the conjugation, then the definition~\hbox{$x\rhd y=xyx^{-1}$} gives a quandle structure on~$Q$. In particular, every group~$G$ itself can be thought of as a rack/quandle in this way. 
\end{examples}

The following result is due to Joyce~\cite[Thm.~4.1]{Joyce}. It is also proven in Kr\"uger's thesis~(see~\cite[Satz~2.5]{Krueger}) and by Dehornoy~\cite[Prop.~X.4.8]{Dehornoy}, for example. The statement~\cite[Ex.~V.1.22]{Dehornoy} is erroneous. 

\begin{proposition}\label{prop:FQ}
Let~$X$ be a set. The free quandle~$\FQ(X)$ generated by the set~$X$ can be modeled as the subset of the free group~$\rmF(X)$ on~$X$ that consists of the conjugacy classes of the generators~\hbox{$X\subseteq\rmF(X)$}, and the operation is given by conjugation, 
as in Example~\ref{ex:conjugation_quandle}.
\end{proposition}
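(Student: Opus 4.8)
The plan is to verify the universal property of the free quandle directly: show that the conjugacy-class subset of $\rmF(X)$, equipped with the conjugation operation, is a quandle, and that any set map from $X$ into a quandle $Q$ extends uniquely to a quandle morphism out of it. First I would make precise the proposed model. Let $C(X)\subseteq\rmF(X)$ be the set of all conjugates $w x w^{-1}$ of generators $x\in X$ by elements $w\in\rmF(X)$; equivalently, the union of the conjugacy classes of the generators. This is visibly closed under the conjugation operation $u\rhd v=uvu^{-1}$, since a conjugate of a generator conjugated by another conjugate of a generator is again a conjugate of a generator. By Examples~\ref{ex:conjugation_quandle}, the conjugation operation makes $C(X)$ a quandle, and $X\subseteq C(X)$ via $x\mapsto x$.

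The heart of the argument is the universal property. Given any quandle $Q$ and any function $f\colon X\to Q$, I would construct the extension $\tilde f\colon C(X)\to Q$ and show it is the unique quandle morphism restricting to $f$. The natural route is through the associated group. Recall that every quandle $Q$ embeds into the conjugation quandle of its \emph{enveloping} (or \emph{adjoint}) group $\Aut$-type construction $\mathrm{As}(Q)$, the group with one generator $\overline{q}$ for each $q\in Q$ and relations $\overline{u}\,\overline{v}\,\overline{u}^{-1}=\overline{u\rhd v}$; the structure map $Q\to\mathrm{As}(Q)$ is a quandle homomorphism into the conjugation quandle. Then $f\colon X\to Q\to\mathrm{As}(Q)$ induces a \emph{group} homomorphism $F\colon\rmF(X)\to\mathrm{As}(Q)$ by the universal property of the free group. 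Since $F$ is a group homomorphism, it carries conjugates to conjugates, hence sends $C(X)$ into the conjugacy-class set of the images of the generators inside $\mathrm{As}(Q)$, and being compatible with conjugation it is a quandle map on $C(X)$. Composing with the quandle retraction/projection back to $Q$—more carefully, observing that each $w x w^{-1}\in C(X)$ maps under $F$ into the sub-quandle generated by $f(X)$ inside $\mathrm{As}(Q)$, which lands in (the image of) $Q$—produces the desired $\tilde f\colon C(X)\to Q$ with $\tilde f|_X=f$.

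The main obstacle is uniqueness, and dually, checking that $\tilde f$ genuinely lands in $Q$ rather than merely in the conjugation quandle of $\mathrm{As}(Q)$. For uniqueness, any quandle morphism $g\colon C(X)\to Q$ extending $f$ is determined on generators, and since $C(X)$ is generated as a quandle by $X$ (every element $wxw^{-1}$ with $w=y_1^{\pm1}\cdots y_k^{\pm1}$ can be written using the quandle operation $\rhd$ and the fact that $u\rhd(-)$ is invertible with inverse given by the canonical-automorphism machinery), $g$ is forced to agree with $\tilde f$. The delicate point is precisely that one must express an \emph{arbitrary} conjugate $w x w^{-1}$, with $w$ an alternating word in positive and negative powers, purely in terms of the quandle structure; handling the negative powers requires the inverse operation $u\rhd^{-1}(-)$, which exists in any rack because left multiplication is an automorphism. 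I expect this bookkeeping—translating group-theoretic conjugation by arbitrary words into iterated quandle operations—to be where the real work lies, and it is exactly the step where the rack axiom (invertibility of left multiplication) is essential.
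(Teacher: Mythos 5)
Your overall strategy---verifying the universal property of the conjugacy-class model directly---is the right one, and your uniqueness argument is sound: the set $C(X)$ of conjugates of generators is generated by $X$ under $\rhd$ and $\rhd^{-1}$, and any quandle morphism automatically commutes with $\rhd^{-1}$. (The paper itself offers no proof but cites Joyce, Kr\"uger, and Dehornoy, whose arguments follow the direct route.) However, your construction of the extension $\tilde f$ has a genuine gap: the claim that every quandle $Q$ embeds into the conjugation quandle of its enveloping group $\mathrm{As}(Q)$ is false. For a counterexample, take $Q=\{a,b,c\}$ where left multiplication by $a$ and by $b$ is the identity and left multiplication by $c$ is the transposition exchanging $a$ and $b$; this is a quandle, but in $\mathrm{As}(Q)$ the relations force $\overline{a}$ and $\overline{b}$ to be central, whence $\overline{b}=\overline{c}\,\overline{a}\,\overline{c}^{-1}=\overline{a}$, so the unit map $Q\to\mathrm{As}(Q)$ identifies $a$ with $b$. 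Consequently your homomorphism $F\colon\rmF(X)\to\mathrm{As}(Q)$ only lands in the image of $Q$ inside $\mathrm{As}(Q)$, which is in general a proper quotient quandle of $Q$, and there is no canonical ``retraction/projection back to $Q$'': the value of $\tilde f(wxw^{-1})$ in $Q$ itself is simply not determined by $F$.

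The repair is to replace $\mathrm{As}(Q)$ by the action of the free group on $Q$ itself. Since each left multiplication $y\mapsto f(x)\rhd y$ is an automorphism of $Q$, the assignment $x\mapsto(y\mapsto f(x)\rhd y)$ extends to a group homomorphism $\rho\colon\rmF(X)\to\Aut(Q)$, and one sets $\tilde f(wxw^{-1})=\rho(w)(f(x))$. Well-definedness is exactly the bookkeeping you anticipated, but it rests on two specific facts: distinct generators of a free group are not conjugate and the centralizer of a generator $x$ is $\langle x\rangle$, so the only ambiguity in writing an element of $C(X)$ as $wxw^{-1}$ is replacing $w$ by $wx^{k}$; and the quandle axiom $q\rhd q=q$ gives $\rho(x^{k})(f(x))=f(x)$, which kills that ambiguity. (This is also precisely where the argument fails for racks, as it must in view of Proposition~\ref{prop:FR}.) The identity $\rho(u)\circ(y\mapsto f(x)\rhd y)\circ\rho(u)^{-1}=(y\mapsto\rho(u)(f(x))\rhd y)$, valid because $\rho(u)$ is a quandle automorphism, then shows that $\tilde f$ is a quandle morphism. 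With this substitution your proof goes through.
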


The following result appeared with proof in Kr\"uger's thesis (see~\cite[Satz~2.5]{Krueger}). It is also stated by Fenn and Rourke~\cite[Ex.~9]{Fenn+Rourke}, and proven in~\cite[Prop.~V.1.17]{Dehornoy} and~\cite[Prop.~1.3]{FGG}, for instance. 

\begin{proposition}\label{prop:FR}
Let~$X$ be a set. The free rack~$\FR(X)$ generated by the set~$X$ can be modeled as the cartesian product~\hbox{$\rmF(X)\times X$}, together with the binary operation
\[
(g,x)\rhd(h,y)=(gxg^{-1}h,y), 
\]
and the universal map~$X\to\FR(X)$ sends~$x$ to~$(e,x)$, where~$e$ is the unit in the group~$\rmF(X)$. 
\end{proposition}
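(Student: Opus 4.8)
The statement requires two things: that the described structure is genuinely a rack, and that it satisfies the universal property characterizing the free rack. Verifying the first is a direct computation. Writing $L_{(g,x)}$ for the left multiplication $(h,y)\mapsto(g,x)\rhd(h,y)=(gxg^{-1}h,y)$, one sees that it acts on the first coordinate as left translation by the group element $gxg^{-1}$ and fixes the second; hence it is a bijection, with inverse $(h,y)\mapsto(gx^{-1}g^{-1}h,y)$. A short calculation, equivalently the check of left self-distributivity, shows that $L_{(g,x)}$ also respects $\rhd$, so it is an automorphism of $\rmF(X)\times X$, as required.

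For the universal property, the plan is to avoid working with reduced words directly and instead exploit the action of automorphisms. Given any rack $R$ and any map of sets $f\colon X\to R$, each $f(x)$ determines the left-multiplication automorphism $L_{f(x)}\in\Aut(R)$. Since $\rmF(X)$ is free on $X$, the assignment $x\mapsto L_{f(x)}$ extends uniquely to a group homomorphism $\phi\colon\rmF(X)\to\Aut(R)$, and I would then define
\[
\bar f(g,y)=\phi(g)\big(f(y)\big).
\]
Because $\phi$ is a genuine homomorphism out of the free group, this formula is automatically well defined, with no need to verify invariance under any relations, and $\bar f(e,x)=f(x)$, so $\bar f$ extends $f$ along the universal map.

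The heart of the argument is checking that $\bar f$ is a rack homomorphism, and this is the step I expect to be the main obstacle. It rests on the identity $a\circ L_r\circ a^{-1}=L_{a(r)}$, valid for any automorphism $a$ of a rack and any $r\in R$, which is just the statement that automorphisms intertwine the left multiplications. Applying it with $a=\phi(g)$ and $r=f(x)$, and using $\phi(x)=L_{f(x)}$, one gets $L_{\phi(g)(f(x))}=\phi(g)\phi(x)\phi(g)^{-1}$, whence
\[
\bar f(g,x)\rhd\bar f(h,y)
=L_{\phi(g)(f(x))}\big(\phi(h)(f(y))\big)
=\phi(gxg^{-1}h)\big(f(y)\big),
\]
which is exactly $\bar f\big((g,x)\rhd(h,y)\big)=\bar f(gxg^{-1}h,y)$. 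This is the point at which the conjugation appearing in the first coordinate of the operation is indispensable, and it is the only computation that requires genuine care.

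Finally, uniqueness follows from a generation argument. Writing $g=x_1^{\epsilon_1}\cdots x_n^{\epsilon_n}$ in reduced form, the element $(g,y)$ is obtained from the generator $(e,y)$ by successively applying the automorphisms $L_{(e,x_n)}^{\epsilon_n},\dots,L_{(e,x_1)}^{\epsilon_1}$; thus the elements $(e,x)$ generate $\FR(X)$ under left multiplication and its inverse. Since any rack homomorphism commutes with the left-multiplication automorphisms, hence with both $\rhd$ and its inverse operation, its values on all of $\FR(X)$ are forced by its values on these generators. Therefore $\bar f$ is the unique extension of $f$, completing the verification of the universal property.
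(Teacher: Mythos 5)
The paper does not prove Proposition~\ref{prop:FR} at all: it quotes the result from the literature (Kr\"uger, Fenn--Rourke, Dehornoy, and Farinati--Guccione--Guccione), so there is no in-text argument to compare yours against. Your proof is correct and complete, and it is essentially the standard argument from those sources: the key move of routing the universal property through the group homomorphism $\phi\colon\rmF(X)\to\Aut(R)$, $x\mapsto L_{f(x)}$, is exactly what makes well-definedness automatic, and the identity $a\circ L_r\circ a^{-1}=L_{a(r)}$ is the right mechanism for seeing why the conjugate $gxg^{-1}$ must appear in the first coordinate. The uniqueness step is also handled correctly: you note that rack homomorphisms intertwine the inverse operation $\rhd^{-1}$ as well, which is needed to reach elements $(g,y)$ with negative exponents in $g$ from the generators $(e,x)$. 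No gaps.
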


\begin{remark}
Every quandle is a rack, so there is a morphism~$\FR(X)\to\FQ(X)$
of racks that is the identity on generators. It is surjective, and it sends~$(g,x)$ to~$gxg^{-1}$. No two generators are conjugate, and the centralizer of a generator in a free group consists of the powers of that generator. Therefore, two pairs~$(g,x)$ and~$(h,y)$ map to the same element if and only if~$x=y$ and~$g^{-1}h$ is a power of~$x$. 
\end{remark}


\subsection{Pointed racks and quandles}\label{sec:pointed}

Let~$\Sets_\star$ denote the category of pointed sets. The forgetful functor~$\Sets_\star\to\Sets$ has a left adjoint; it sends a set~$X$ to the set~$X_+$ (with base-point~$+$). We are looking for suitable analogs for the categories of racks and quandles.

Elements in a rack can have two desirable properties:

\begin{definition}\label{def:fix}
An element~$f$ in a rack~$R$ is {\it fixed} if~$x\rhd f=f$ for all~$x$ in~$R$.
An element~$e$ in a rack~$R$ is {\it fixing} if~$e\rhd y=y$ for all~$y$ in~$R$.
\end{definition}

The following concept has been discussed in~\cite[Sec.~1.4]{FRS:2}, without the terminology from Definition~\ref{def:fix}.

\begin{definition}
A {\em unit} in a rack~$R$ is an element~$u$ that is both fixed and fixing.
A {\em unital} rack~$(R,u)$ is a rack~$R$ together with a chosen unit~$u$.
\end{definition}

\begin{examples}\label{ex:trivial}
To have a unit is a structure, not a property: a rack can have many units. For instance, if~$R$ is the trivial rack with~$x\rhd y=y$ for all~$x$ and~$y$, then all elements can be chosen as units.
\end{examples}

If~$(R,r)$ is a unital rack, or even if~$r$ is only fixed or fixing, then we have~$r\rhd r=r$, so that the inclusion~\hbox{$\{r\}\to R$} is a morphism of racks from the terminal rack~$\{r\}$. This turns out to be the more interesting structure:

\begin{definition}\label{def:pointed_rack}
A {\em pointed} rack~$(R,p)$ is a rack~$R$ together with an element~$p$ such that~\hbox{$p\rhd p=p$}.
\end{definition}

For instance, any fixed element can be used to point a rack, and so can any fixing element. In other words, having a unit is the strongest requirement considered here, whereas being pointed is the weakest. The existence of fixed or fixing elements is somewhere in between.
\[
\xymatrix@C=-10pt@R=10pt{
&\text{unit}\ar@{=>}[dl]\ar@{=>}[dr]&\\
\text{fixed}\ar@{=>}[dr]&&\text{fixing}\ar@{=>}[dl]\\
&\text{pointed}&
}
\]

\begin{example}
Not every rack can be pointed. Given a permutation~$f\colon R\to R$, we get a permutation rack structure on~$R$ where~$x\rhd y=f(y)$ for all~$x$ and~$y$. If~$f$ is fixed-point-free then no element can serve as a base-point.
\end{example}

In a quandle, {\em every} element~$x$ fulfills~$x\rhd x=x$, so that every element gives rise to a morphism from the terminal rack (which is a quandle). Therefore, the situation is more transparent for quandles: an element~$q$ of a quandle~$Q$ automatically satisfies~$q\rhd q=q$, so that the following definition is reasonable after Definition~\ref{def:pointed_rack}.

\begin{definition}\label{def:pointed_quandle}
A {\em pointed} quandle~$(Q,q)$ is a quandle~$Q$ together with an element~$q$ in~$Q$.
\end{definition}


Let~$\Quandles_\star$ denote the category of pointed quandles. We write
\[
\FQ_\star\colon\Sets_\star\longrightarrow\Quandles_\star
\]
for the left adjoint of the forgetful functor. This adjoint exists for abstract reasons. The following result shows how it can be computed.

\begin{proposition}\label{prop:free_pointed_quandle}
Let~$(X,p)$ be a pointed set. Then
\[
\FQ_\star(X,p)=(\FQ(X),p)
\]
is a free pointed quandle on~$(X,p)$.
\end{proposition}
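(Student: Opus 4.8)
The plan is to deduce the pointed adjunction directly from the unpointed free quandle adjunction underlying Proposition~\ref{prop:FQ}, exploiting the fact that Definition~\ref{def:pointed_quandle} imposes no relation on the chosen element. First I would recall the unit $\eta_X\colon X\to\FQ(X)$ of the free quandle adjunction, which sends each generator to itself; in the model of Proposition~\ref{prop:FQ} it sends the base-point generator~$p\in X$ to the corresponding element~$p\in\FQ(X)$. Thus~$\eta_X$ is itself a morphism of pointed sets $(X,p)\to(\FQ(X),p)$, and it will serve as the unit of the pointed adjunction.

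Next I would verify the universal property. Given any pointed quandle $(Q,q)$ and any morphism of pointed sets $f\colon(X,p)\to U(Q,q)$, so that the underlying map $f\colon X\to Q$ satisfies $f(p)=q$, the free quandle adjunction produces a unique quandle morphism $\bar f\colon\FQ(X)\to Q$ with $\bar f\circ\eta_X=f$. It then remains only to check that $\bar f$ preserves base-points, which is immediate: $\bar f(p)=\bar f(\eta_X(p))=f(p)=q$. Hence $\bar f$ is a morphism of pointed quandles $(\FQ(X),p)\to(Q,q)$ restricting to~$f$ along~$\eta_X$, and its uniqueness as such follows from its uniqueness as a quandle morphism. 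This establishes the natural bijection
\[
\Hom_{\Quandles_\star}\bigl((\FQ(X),p),(Q,q)\bigr)\;\cong\;\Hom_{\Sets_\star}\bigl((X,p),U(Q,q)\bigr),
\]
which is exactly the assertion that $(\FQ(X),p)$ is free on $(X,p)$.

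I expect no serious obstacle here; the entire content is that, for quandles, pointing is a genuinely free operation because every element~$q$ of a quandle automatically satisfies $q\rhd q=q$, so the pointed morphisms are carved out of the unpointed ones by a single condition on one generator rather than by any interaction with the operation~$\rhd$. The only point requiring care is the bookkeeping in the first step: confirming, via the explicit model of Proposition~\ref{prop:FQ}, that the distinguished element of $(\FQ(X),p)$ really is the image of~$p$ under the universal map, so that the unit~$\eta_X$ is base-point preserving on the nose. This is what lets the reduction to the unpointed adjunction go through verbatim, and it is precisely the feature that fails for racks, where $p\rhd p=p$ is an extra constraint and the free pointed rack of Proposition~\ref{prop:free_pointed_rack} is correspondingly more delicate.
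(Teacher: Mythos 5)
Your proposal is correct and is essentially the paper's own argument: both reduce the pointed universal property to the unpointed one by observing that a pointed-quandle morphism out of $(\FQ(X),p)$ is just a quandle morphism out of $\FQ(X)$ sending $p$ to $q$, which by the free quandle adjunction corresponds to a pointed map of sets. Your extra care about the unit $\eta_X$ being base-point preserving is a harmless elaboration of the same bijection.
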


\begin{proof}
We verify that~$(\FQ(X),p)$ has the required universal property: morphisms of pointed quandles from~$(\FQ(X),p)$ to a given pointed quandle~$(Q,q)$ are morphisms of quandles from~$\FQ(X)$ to~$Q$ that send~$p$ to~$q$. These, in turn, are in natural bijection with maps from~$X$ to~$Q$ that send~$p$ to~$q$, or maps of pointed sets from~$(X,p)$ to~$(Q,q)$, as claimed.
\end{proof}


Let~$\Racks_\star$ denote the category of pointed racks. We will need the left adjoint
\[
\FR_\star\colon\Sets_\star\longrightarrow\Racks_\star
\]
of the forgetful functor to~$\Sets_\star$. It exists for abstract reasons. The following result shows how it can be computed.

\begin{proposition}\label{prop:free_pointed_rack}
Let~$(X,p)$ be a pointed set. Define
\[
\FR_\star(X,p)=\FR(X)/\!\sim,
\]
where the equivalence relation on the free rack~$\FR(X)=\rmF(X)\times X$ is given by identifying a pair~$(g,x)$ with a pair~$(h,y)$ if and only if both~$x=p=y$ and~$g^{-1}h$ is a power of~$p$. This gives a model for the free pointed rack on the pointed set~$(x,p)$. The base-point is the class~$[\,e,p\,]$ of the pair~$(e,p)$.
\end{proposition}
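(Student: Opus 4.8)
The plan is to establish the universal property directly, leveraging the explicit model of the free (unpointed) rack from Proposition~\ref{prop:FR}. Fix a pointed rack~$(R,r)$, so that~$r\rhd r=r$, together with a map of pointed sets~$f\colon(X,p)\to(R,r)$, that is, a function~$f\colon X\to R$ with~$f(p)=r$. By freeness of~$\FR(X)$, the function~$f$ extends to a unique rack morphism~$\hat f\colon\FR(X)\to R$ with~$\hat f(e,x)=f(x)$; concretely, writing~$g=z_1^{\epsilon_1}\cdots z_k^{\epsilon_k}$ as a word in the generators, one has~$\hat f(g,x)=L_{f(z_1)}^{\epsilon_1}\cdots L_{f(z_k)}^{\epsilon_k}\bigl(f(x)\bigr)$, where~$L_a$ denotes the left-multiplication automorphism~$s\mapsto a\rhd s$. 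The goal is to show that~$\hat f$ factors uniquely through the quotient~$\FR(X)/\!\sim$ as a morphism of pointed racks, and that every pointed morphism out of the quotient arises this way.

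First I would record that~$\sim$ is a rack congruence, so that~$\FR(X)/\!\sim$ is genuinely a rack and~$[\,e,p\,]$ is a legitimate base-point. Since the nontrivial identifications only involve pairs whose second coordinate is~$p$, compatibility with~$\rhd$ reduces to a handful of short computations using~$(g,x)\rhd(h,y)=(gxg^{-1}h,y)$: changing the left factor within its class leaves the product unchanged on the nose (because~$p^n$ commutes with~$p$, so that~$gp^n\cdot p\cdot(gp^n)^{-1}=gpg^{-1}$), while changing the right factor alters the product only by a right power of~$p$, hence keeps it within a single class. That~$[\,e,p\,]\rhd[\,e,p\,]=[\,e,p\,]$ follows from~$(e,p)\rhd(e,p)=(p,p)\sim(e,p)$.

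The crux, and the step I expect to be the main obstacle, is to verify that~$\hat f$ is constant on~$\sim$-classes, i.e.\ that~$\hat f(gp^n,p)=\hat f(g,p)$ for all~$n\in\bbZ$. The key algebraic identity is~$(g,p)\rhd(g,p)=(gp,p)$, so that passing from~$(g,p)$ to~$(gp,p)$ applies the canonical automorphism to the image; more generally~$(gp^n,p)$ is reached from~$(g,p)$ by iterating (or inverting) this operation. Thus it suffices to show that each~$\hat f(g,p)$ is a fixed point of the canonical automorphism of~$R$. Writing~$\hat f(g,p)=\phi(r)$ with~$\phi=L_{f(z_1)}^{\epsilon_1}\cdots L_{f(z_k)}^{\epsilon_k}$ a rack automorphism of~$R$, this is immediate: since~$\phi$ preserves the operation and~$r\rhd r=r$ by pointedness, we get~$\phi(r)\rhd\phi(r)=\phi(r\rhd r)=\phi(r)$. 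Hence~$\hat f$ descends to a rack morphism~$\tilde f\colon\FR(X)/\!\sim\,\to R$, which is pointed because~$\tilde f\,[\,e,p\,]=f(p)=r$.

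Finally I would close the adjunction. The assignment~$f\mapsto\tilde f$ is injective since~$\tilde f$ is determined by its values on the generators~$[\,e,x\,]$, and it is surjective because any pointed rack morphism~$\FR(X)/\!\sim\,\to R$, precomposed with the quotient map~$\FR(X)\to\FR(X)/\!\sim$, is by freeness the extension~$\hat f$ of its restriction~$f\colon X\to R$ to generators, with~$f(p)=r$ forced by pointedness. This exhibits a natural bijection between pointed rack morphisms out of~$(\FR(X)/\!\sim,[\,e,p\,])$ and pointed set maps out of~$(X,p)$, establishing the claimed universal property.
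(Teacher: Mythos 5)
Your proposal is correct and follows essentially the same route as the paper: check that~$\sim$ is a rack congruence using the explicit formula from Proposition~\ref{prop:FR}, then verify the universal property by showing the unique extension~$\hat f\colon\FR(X)\to R$ descends to the quotient precisely when the target is pointed. Your observation that~$(g,p)\rhd(g,p)=(gp,p)$, so that descent amounts to~$\hat f(g,p)$ being fixed by the canonical automorphism (which holds since~$\hat f(g,p)=\phi(r)$ for a rack automorphism~$\phi$ and~$r\rhd r=r$), is a clean justification of the factorization step that the paper states without detail.
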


\begin{proof}
We first note that this gives a well-defined rack structure on~$\FR(X)/\!\sim$. An alternative statement of the equivalence relation is that~$(g,p) \sim (gp^k, p)$ for all~$g \in F(X)$ and~$k \in \bbZ$,  and we observe that the binary operation of Proposition~\ref{prop:FR} respects this.

We verify that the quotient rack satisfies the universal property. By the universal property of the free rack~$\FR(X)$, maps~$(X,p)\to(R,q)$ from a pointed set~$(X,p)$ into a pointed rack~$(R,q)$ are in canonical bijection with morphisms~$\FR(X)\to R$ of racks such that~$(e,p)$ is mapped to~$q$. This part uses nothing about~$q$, and also~$(e,p)$ is not a base-point for~$\FR(X)$, because~\hbox{$(e,p)\rhd(e,p)=(p,p)$}. If we pass to the quotient rack as indicated in the proposition, then~$[\,e,p\,]$ becomes a base-point, and the morphism~$\FR(X)\to R$ factors uniquely through the quotient if~$q$ is a base-point for~$R$.
\end{proof}

\begin{remark}\label{rem:set-theoretic}
There is a bijection between the quotient~$\FR_\star(X,p)$ of~$\rmF(X)\times X$ described in the proposition and the subset
\[
\Big(\rmF(X)\times(X\setminus\{p\})\Big)\cup\Big(\{\,gpg^{-1}\,|\,g\in\rmF(X)\,\}\times\{p\}\Big)
\]
of~$\rmF(X)\times X$. To see that, note that two elements are identified only if their second components are both equal to~$p$. Therefore, the quotient is the disjoint union of~\hbox{$\rmF(X)\times(X\setminus\{p\})$} and the quotient~$(\rmF(X)/\!\sim)\times\{p\}$, with the identification as in the proposition:~$g\sim h$ if and only if~$g^{-1}h$ is a power of~$p$. This is the case if and only if we have~$gpg^{-1}=hph^{-1}$. We see that~$\rmF(X)/\!\sim$ can be identified with the set of conjugates of~$p$, as claimed.
\end{remark}

\begin{remark}
There is an arguably more systematic~(but less elementary) way of proving the proposition that is worth mentioning. First, note that formal reasons imply that there is a natural isomorphism~\hbox{$\FR_\star(X,p)\cong\FR(X\setminus\{p\})\oplus\star$} of racks, where~$\oplus$ refers to the categorical sum~(co-product) in the category of racks.~(This can be checked on right adjoints, for instance.) Second, it remains to identify the construction in our result with the categorical sum. This can be done using Kr\"uger's general description~\cite[Satz~2.3]{Krueger} of the categorical sum~$R_1\oplus R_2$ of two racks~$R_j$. 
\end{remark}


We shall need a variant of the preceding Proposition~\ref{prop:free_pointed_rack} for pointed racks whose base-point is fixed as in Definition~\ref{def:fix}. The starting point is the observation that, for a fixed element~$p$ in a rack~$X$, the left-multiplication with~$p$ is a{\it~central} inner automorphism:
\[
x\rhd(p\rhd y)=(x\rhd p)\rhd(x\rhd y)=p\rhd(x\rhd y)
\]
for all~$x$ and~$y$ in~$X$. This means that the~$\rmF(X)$--action on~$X$ factors through the canonical epimorphism
\begin{equation}\label{eq:central_epi}
\rmF(X)=\rmF(X\setminus\{p\})\oplus\rmF(\{p\})\longrightarrow\rmF(X\setminus\{p\})\times\rmF(\{p\})\cong\rmF(X\setminus\{p\})\times\bbZ.
\end{equation}
The kernel of~\eqref{eq:central_epi} is freely generated by the commutators~$[\,g,p^n\,]$ for all non-identity elements~\hbox{$g\in\rmF(X\setminus\{p\})$} and all~\hbox{$n\not=0$}.

\begin{proposition}\label{prop:free_fixed_rack}
Let~$(X,p)$ be a pointed set. Define
\[
\FR_{\mathrm{fixed}}(X,p) = \FR(X) / \sim,
\]
where the equivalence relation on the free rack~$\FR(X) = \rmF(X) \times X$ is given by 
identifying a pair~$(g,x)$ with another pair~$(h,y)$ if~$x=y=p$ or if~$x=y\not=p$ and~$g^{-1}h$ is in the kernel of the canonical surjection~\eqref{eq:central_epi}. This gives a model for the free rack with a chosen element~$p$ that is fixed.
\end{proposition}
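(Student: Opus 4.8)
The plan is to follow the template of the proof of Proposition~\ref{prop:free_pointed_rack}: first verify that~$\sim$ is a congruence, so that~$\FR_{\mathrm{fixed}}(X,p)$ inherits a rack structure from~$\FR(X)=\rmF(X)\times X$; then observe that the class~$[\,e,p\,]$ is fixed; and finally check the universal property of a free rack with a chosen fixed element, reducing it to the universal property of the free rack~$\FR(X)$ supplied by Proposition~\ref{prop:FR}.

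Write~$\pi$ for the surjection~\eqref{eq:central_epi}, so that, on the fibers over a generator~$x\neq p$, the relation identifies~$(g,x)$ with~$(h,x)$ exactly when~$\pi(g)=\pi(h)$, while over~$p$ all pairs are identified. Since~$\sim$ only relates pairs with equal second coordinate, checking compatibility with the operation~$(g_1,x_1)\rhd(g_2,x_2)=(g_1x_1g_1^{-1}g_2,x_2)$ of Proposition~\ref{prop:FR} amounts to a computation in the first coordinate. The key point is that~$\pi(p)$ is central in~$\rmF(X\setminus\{p\})\times\bbZ$: applying~$\pi$ to the first coordinate of the product, the conjugate~$\pi(g)\pi(x_1)\pi(g)^{-1}$ collapses to~$\pi(p)$ when~$x_1=p$ and depends on~$g$ only through~$\pi(g)$ when~$x_1\neq p$, so that—together with the equality of the~$\pi$-images of the trailing factors—the first coordinate respects~$\sim$ on the fiber over~$x_2$ in every case. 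This shows~$\rhd$ descends to the quotient. The base-point~$[\,e,p\,]$ is then fixed, since~$(g,x)\rhd(e,p)=(gxg^{-1},p)\sim(e,p)$, all pairs with second coordinate~$p$ being identified.

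For the universal property, let~$(R,q)$ be a rack with a chosen fixed element~$q$, and let~$f\colon(X,p)\to(R,q)$ be a map of pointed sets. By Proposition~\ref{prop:FR} it extends uniquely to a rack morphism~$\FR(X)\to R$ sending~$(g,x)$ to~$\rho(g)\bigl(f(x)\bigr)$, where~$\rho\colon\rmF(X)\to\Aut(R)$ is the homomorphism determined by~$x\mapsto\lambda_{f(x)}$ and~$\lambda_r\colon y\mapsto r\rhd y$ denotes left-multiplication. I would then show that this morphism respects~$\sim$, so that it descends to~$\FR_{\mathrm{fixed}}(X,p)$. Over~$p$ this asks that~$\rho(g)(q)=q$ for all~$g$, which holds because~$q$ is fixed and hence preserved by every left-multiplication, so by every composite~$\rho(g)$. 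Over a generator~$x\neq p$ it asks that~$\rho(k)$ fix~$f(x)$ whenever~$k$ lies in the kernel of~\eqref{eq:central_epi}, and since that kernel is freely generated by the commutators~$[\,g,p^n\,]$, it is enough to prove~$\rho([\,g,p^n\,])=\id$.

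This last identity is the only real work, and it is precisely where fixedness of the base-point is essential. The centrality computation displayed just before the proposition shows that for a fixed element~$q$ the left-multiplication~$\lambda_q$ commutes with every~$\lambda_r$; equivalently~$\rho(p)=\lambda_q$ is central in the image of~$\rho$. Hence~$\rho(g)$ commutes with~$\rho(p)^n$ for every~$g\in\rmF(X\setminus\{p\})$, giving~$\rho([\,g,p^n\,])=[\rho(g),\rho(p)^n]=\id$ and the desired factorization. Conversely, any rack morphism~$\FR_{\mathrm{fixed}}(X,p)\to R$ carrying~$[\,e,p\,]$ to~$q$ restricts on the generators to such a pointed map~$f$, and the two assignments are mutually inverse; this yields the natural bijection expressing the universal property. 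The structural point to keep in view is that fixedness of~$q$ is used twice---once to collapse the~$p$-fiber and once, through centrality, to force the~$\rmF(X)$-action to factor through~\eqref{eq:central_epi}---which is exactly why the equivalence relation has its two clauses.
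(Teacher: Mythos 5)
Your proof is correct and follows essentially the same route as the paper: check that the relation is a congruence, then reduce the universal property to that of $\FR(X)$ from Proposition~\ref{prop:FR}, using that left-multiplication by a fixed element is central so that the induced $\rmF(X)$--action kills the commutators generating the kernel of~\eqref{eq:central_epi}. You spell out in more detail (the congruence check and the identity $\rho([\,g,p^n\,])=\id$) what the paper's proof leaves implicit, but the underlying argument is the same.
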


\begin{proof}
We proceed as in the proof of Proposition~\ref{prop:free_pointed_rack}. The formula of Proposition~\ref{prop:FR} again shows that the rack operation respects the equivalence relation. By the universal property of the free rack~$\FR(X)$, any map~$(X,p)\to(R,q)$ from a pointed set~$(X,p)$ into a pointed rack~$(R,q)$ determines a unique morphism~$\FR(X)\to R$ of racks such that~$(e,p)$ is mapped to~$q$. Since the element~$q$ is fixed, the elements~$(g,p)$ are all mapped to~$q$, regardless of~$g$, and we have to identify~$(g,p)$ with~$(h,p)$ for all~$g$ and~$h$. Then these elements act identically on each~$(w,x)$, and since~\hbox{$(g,p)\rhd(w,x)=(gpg^{-1}w,x)$}, this means that we have to identify~$(gpg^{-1}w,x)$ with~$(hph^{-1}w,x)$ for the other~$x\not=p$, too. This, however, is but the same as saying that~$g$ and~$h$ define the same element in the quotient~\eqref{eq:central_epi}.
\end{proof}

\begin{remark}\label{rem:set-theoretic_2}
The proof has shown that we obtain a set-theoretic decomposition
\[
\FR_{\mathrm{fixed}}(X,p)\cong
\Big(\rmF(X\setminus\{p\})\times\rmF(\{p\})\Big)\times\Big(X\setminus\{p\}\Big)
\cup\{e\}\times\{p\}
\]
similar to the one in Remark~\ref{rem:set-theoretic}.
\end{remark}


\subsection{Abelian racks and quandles}\label{sec:abelianization}

An {\it abelian rack} (or more precisely: an abelian group object in the category of racks) is a rack~$R$ that is also an abelian group such that the operation~$\rhd\colon R\times R\to R$ is a homomorphism of abelian groups. 

For abstract reasons, the category of abelian racks is equivalent to the category of modules over an associative ring with unit: the endomorphism ring of the abelianization of the free model of rank~$1$. In the case at hand, it turns out that the category of abelian racks is equivalent to the category of modules over the ring
\[
\Racks^\ab\cong\bbZ[\rmA^{\pm},\rmE]/(\rmE^2-\rmE(1-\rmA)).
\]
More concretely, a module~$M$ over that ring becomes an abelian rack via~\hbox{$x\rhd y = \rmE x + \rmA y$}, so that~$\rmA y = 0\rhd y$ and~$\rmE x = x\rhd 0$, and every abelian rack is of this form.

For quandles, the ring we get is the ring
\begin{equation}\label{eq:Laurent}
\Quandles^\ab\cong\bbZ[\rmA^{\pm1}]
\end{equation}
of Laurent polynomials, with~$\rmE=(1-\rmA)$, and~\hbox{$x\rhd y = (1-\rmA)x + \rmA y$} is the formula that translates modules over that ring into abelian quandles and back. See~\cite[Sec.~2]{Szymik:2} for a detailed exposition and historical references.

We offer the following explanation for the relations in the ring~$\Racks^\ab$: this ring is characterized in categorical terms by a diagram that involves, arguably, only canonical morphisms of rings.

\parbox{\linewidth}{\begin{proposition}\label{prop:pullback_of_rings}
The ring~$\Racks^\ab\cong\bbZ[\rmA^{\pm},\rmE]/(\rmE^2-\rmE(1-\rmA))$ is a pullback of the diagram
\[
\xymatrix@C=3em{
\bbZ[\rmA^{\pm}]\ar[r]^-{\rmA\,\mapsto1}&\bbZ&
\bbZ[\rmA^{\pm}]\ar[l]_-{\rmA\,\mapsto1}
}
\]
of rings.
\end{proposition}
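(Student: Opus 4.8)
The plan is to produce an explicit ring isomorphism onto the pullback and to verify its bijectivity by a free-module computation. Recall first that the pullback of a diagram $B\to D\leftarrow C$ of rings is computed as the fiber product, that is, the subring
\[
P=\{\,(f,g)\in\bbZ[\rmA^{\pm}]\times\bbZ[\rmA^{\pm}]\mid f(1)=g(1)\,\}
\]
of the product, with the two structure maps given by the projections and the evaluation $\rmA\mapsto1$ as the common composite. The goal is then to identify~$\Racks^\ab$ with~$P$.

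First I would define a ring homomorphism~$\varphi\colon\Racks^\ab\to\bbZ[\rmA^{\pm}]\times\bbZ[\rmA^{\pm}]$ on generators by~$\rmA\mapsto(\rmA,\rmA)$ and~$\rmE\mapsto(1-\rmA,0)$. This is well defined because the two images satisfy the defining relation componentwise: in either coordinate the relevant element~$e$ (namely~$1-\rmA$ or~$0$) obeys~$e^2=e(1-\rmA)$, so that~$\varphi(\rmE)^2=\varphi(\rmE)\,\varphi(1-\rmA)$; moreover~$\rmA$ maps to the unit~$(\rmA,\rmA)$. Evaluating at~$\rmA=1$ sends~$(\rmA,\rmA)$ to~$(1,1)$ and~$(1-\rmA,0)$ to~$(0,0)$, so both images lie in~$P$ and~$\varphi$ lands in the pullback. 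The two composites with the projections are~$\rmE\mapsto1-\rmA$ and~$\rmE\mapsto0$, the two canonical maps~$\Racks^\ab\to\Quandles^\ab=\bbZ[\rmA^{\pm}]$, which is what makes the square commute with the asserted structure maps.

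It then remains to check that~$\varphi\colon\Racks^\ab\to P$ is bijective, and here I would compare~$\bbZ[\rmA^{\pm}]$--module structures, acting through~$\rmA$ on the source and through the diagonal~$(\rmA,\rmA)$ on~$P$, with respect to which~$\varphi$ is linear since~$\varphi(\rmA)$ is the diagonal. On the one hand, since~$\rmE^2-\rmE(1-\rmA)$ is monic of degree two in~$\rmE$, the ring~$\Racks^\ab$ is free over~$\bbZ[\rmA^{\pm}]$ with basis~$\{1,\rmE\}$. On the other hand, I claim~$P$ is free over~$\bbZ[\rmA^{\pm}]$ with basis~$\{(1,1),(1-\rmA,0)\}$: given~$(f,g)\in P$ one has~$(f,g)=g\cdot(1,1)+h\cdot(1-\rmA,0)$ precisely when~$f-g=h(1-\rmA)$. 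Since~$\varphi$ carries the first basis bijectively onto the second, it is an isomorphism of modules, hence of rings.

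The main obstacle is the freeness of~$P$, which reduces to the divisibility statement that an element~$f-g\in\bbZ[\rmA^{\pm}]$ vanishing at~$\rmA=1$ is divisible by~$1-\rmA$, with quotient unique because~$\bbZ[\rmA^{\pm}]$ is an integral domain. This is elementary: writing~$f-g=\sum_i c_i\rmA^i$ with~$\sum_i c_i=0$, one rewrites it as~$\sum_i c_i(\rmA^i-1)$ and observes that each~$\rmA^i-1$ is a multiple of~$\rmA-1$ in the Laurent ring. Everything else is a routine verification, so once this divisibility is in hand the isomorphism~$\Racks^\ab\cong P$ follows at once.
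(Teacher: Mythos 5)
Your proof is correct and follows essentially the same route as the paper's: the map into the pullback is given by the two specializations $\rmE\mapsto 1-\rmA$ and $\rmE\mapsto 0$, and bijectivity rests on the same two facts, namely that $\Racks^\ab$ is free over $\bbZ[\rmA^{\pm}]$ on $\{1,\rmE\}$ and that a Laurent polynomial vanishing at $\rmA=1$ is divisible by $1-\rmA$. Packaging the surjectivity/injectivity check as a comparison of free-module bases is only a cosmetic difference, though your explicit verification of the divisibility for negative exponents is a nice touch the paper leaves implicit.
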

}

\begin{proof}
The ring~\hbox{$\bbZ[\rmA^{\pm},\rmE]/(\rmE^2-\rmE(1-\rmA))$} has two canonical maps to~$\bbZ[\rmA^{\pm}]$, namely by using~\hbox{$\rmE\,\mapsto(1-\rmA)$} and~\hbox{$\rmE\,\mapsto 0$}, respectively. The morphism~$\rmA\,\mapsto1$ from~$\bbZ[\rmA^{\pm}]$ to~$\bbZ$ coequalizes both of these arrows. Therefore, we get a morphism from the ring~\hbox{$\bbZ[\rmA^{\pm},\rmE]/(\rmE^2-\rmE(1-\rmA))$} into the pullback. The pullback consists of all pairs of Laurent polynomials~$p(\rmA)$,~$q(\rmA)$ such that~\hbox{$p(1)=q(1)$}. Given such a pair, the difference~\hbox{$p(\rmA)-q(\rmA)$} vanishes at~$1$, and so this difference is divisible by~$1-\rmA$. The element
\[
p(\rmA)+\frac{p(\rmA)-q(\rmA)}{1-\rmA}\rmE
\]
reduces to~$p(\rmA)$ when~$\rmE=0$ and to~$q(\rmA)$ when~$\rmE=1-\rmA$. This shows that the map into the pullback is surjective. Injectivity is also clear: if~$f(\rmA)+g(\rmA)\rmE$ is zero when~$\rmE=0$ and when~$\rmE=1-\rmA$, then~$f(\rmA)=0$ and~\hbox{$f(\rmA)+g(\rmA)(1-\rmA)=0$}, and these imply~$g(\rmA)=0$ as well.
\end{proof}

We need to understand the changes required when passing from racks and quandles to pointed racks and pointed quandles, respectively. Fortunately, there are none:

\begin{proposition}\label{prop:pointedAlexander}
The category of abelian group objects in pointed racks is canonically equivalent to the category of abelian racks, and the category of abelian group objects in pointed quandles is canonically equivalent to the category of abelian quandles. 
\end{proposition}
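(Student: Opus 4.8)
The plan is to show that adding a base-point to the theories of racks and quandles does not change the notion of an abelian group object, by comparing the two theories at the level of abelian group objects directly. The key observation is that in any abelian group object, the underlying additive structure provides a canonical distinguished element, namely the zero element~$0$, and I would argue that this element is automatically a legitimate base-point in the relevant sense. Concretely, in an abelian rack~$M$ with~$x\rhd y=\rmE x+\rmA y$, the element~$0$ satisfies~$0\rhd0=\rmE\cdot0+\rmA\cdot0=0$, so it is a base-point in the sense of Definition~\ref{def:pointed_rack}; the same computation works in an abelian quandle. Thus every abelian group object in racks (respectively quandles) carries a preferred base-point, giving a functor in one direction.

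For the comparison, I would set up two functors between the category of abelian group objects in pointed racks and the category of abelian racks and check that they are mutually inverse. In one direction, an abelian group object in pointed racks has an underlying abelian rack (forget the base-point); in the other direction, an abelian rack is equipped with its zero element as the base-point. The content to verify is that for an abelian group object in pointed racks the chosen base-point \emph{must} coincide with the additive zero. This is where the abelian group structure does the work: the base-point of a pointed rack is the image of the map from the terminal object, and in an abelian group object the structure maps (addition, the unit, and the rack operation) are all homomorphisms, so the distinguished point is forced to be the identity element of the group. I would phrase this as: the base-point is a morphism from the terminal pointed rack, and compatibility with the abelian group structure (in particular with the zero section) pins it down to~$0$.

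The cleanest route is probably to argue that the two forgetful functors $\Racks_\star\to\Racks$ and $\Quandles_\star\to\Quandles$ induce equivalences on abelian group objects because both are, on the nose, reflected by the abelianization ring. Abelian group objects in a pointed theory~$T_\star$ are modules over~$T_\star^\ab$, and abelian group objects in~$T$ are modules over~$T^\ab$; it suffices to show the natural ring map~$T^\ab\to T_\star^\ab$ (coming from the forgetful functor) is an isomorphism. Since a module over either ring is an abelian group with a prescribed~$\rhd$ operation, and the base-point contributes no new unary or nullary operation beyond the zero already present additively, the generators and relations defining~$T^\ab$ and~$T_\star^\ab$ agree. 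I would make this precise by computing~$T_\star^\ab$ as the endomorphism ring of the abelianized free pointed model of rank one and matching it with the corresponding computation for~$T^\ab$ recalled in Section~\ref{sec:abelianization}.

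The main obstacle I anticipate is bookkeeping around the free \emph{pointed} models: I must confirm that the abelianization of the free pointed rack (respectively quandle) on one non-base-point generator has the same underlying module as the abelianization of the ordinary free model of rank one, so that the endomorphism rings coincide. Here Propositions~\ref{prop:free_pointed_rack} and~\ref{prop:free_pointed_quandle} are the relevant inputs: the base-point of the free pointed model becomes the additive zero after abelianization, so it contributes nothing to the module of rank one beyond what is already present. Once this identification of free models is in hand, the equivalence of module categories, and hence of the categories of abelian group objects, follows formally, and the statement for both racks and quandles is obtained by the same argument.
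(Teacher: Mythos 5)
Your proposal is correct and its core is exactly the paper's own argument: in an abelian group object in pointed racks the nullary structure map $\star\to R$ selecting the base-point must be a group homomorphism, which forces $p=0$, so the base-point carries no extra data and the forgetful functor is an equivalence (and likewise for quandles). The detour in your last two paragraphs through endomorphism rings of abelianized free pointed models is a valid alternative but is unnecessary once the direct observation $p=0$ is in hand.
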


\begin{proof}
An abelian group object in the category of pointed racks is a pointed rack~$(R,p)$ that is also an abelian group such that the two operations
\begin{align*}
R\times R\overset{\rhd}{\longrightarrow}&R\\
\star\overset{p}{\longrightarrow}&R
\end{align*}
are homomorphisms. We see that we necessarily have~$p=0$, the zero element in the underlying abelian group of~$R$, and this implies the result for racks. Replacing~`racks' with~`quandles' throughout, one obtains a proof for quandles.
\end{proof}

\begin{corollary}\label{cor:Qstar}
There is an isomorphism
\[
\Quandles_\star^\ab\cong\bbZ[\rmA^{\pm1}]
\]
of rings.
\end{corollary}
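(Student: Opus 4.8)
The plan is to chain together the two facts already in place. By the conventions fixed in the Notation, the ring $\Quandles_\star^\ab$ is characterized as the one whose modules are the abelian group objects in pointed quandles, and concretely it is the endomorphism ring of the abelianization of the free pointed quandle of rank one. Proposition~\ref{prop:pointedAlexander} identifies the category of abelian group objects in pointed quandles with the category of abelian quandles, and~\eqref{eq:Laurent} identifies the latter with the modules over $\Quandles^\ab \cong \bbZ[\rmA^{\pm1}]$. So the candidate for $\Quandles_\star^\ab$ is $\bbZ[\rmA^{\pm1}]$, and the task is to promote this identification of categories into a genuine isomorphism of rings.

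Next I would pin down the free abelian model of rank one on both sides. A pointed set of rank one is $\{p,x\}$ pointed at $p$, and by Proposition~\ref{prop:free_pointed_quandle} the free pointed quandle on it is $(\FQ(\{p,x\}),p)$. Abelianizing in the category of pointed quandles and passing through Proposition~\ref{prop:pointedAlexander}, whose proof records the base-point as the zero element, the image is the free abelian quandle on the single non-base-point generator $x$. Since the free quandle on one generator is already a single point (its free group being $\bbZ$, so that the generator has no nontrivial conjugates, by Proposition~\ref{prop:FQ}), its free abelianization is the rank-one free $\bbZ[\rmA^{\pm1}]$-module, namely $\bbZ[\rmA^{\pm1}]$ itself, whose endomorphism ring as an abelian quandle is $\mathrm{End}(\bbZ[\rmA^{\pm1}])\cong\bbZ[\rmA^{\pm1}]$.

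The step requiring care — and the main obstacle — is that an equivalence of module categories only yields a Morita equivalence of rings, not an isomorphism, so the bare statement of Proposition~\ref{prop:pointedAlexander} is a priori insufficient to determine $\Quandles_\star^\ab$ up to isomorphism. The point to verify is that the equivalence of Proposition~\ref{prop:pointedAlexander} is compatible with the forgetful functors to abelian groups: an abelian group object in pointed quandles and the corresponding abelian quandle have literally the same underlying abelian group, the base-point merely being recorded as $0$. Because these forgetful functors are represented by the respective free rank-one objects, and those objects correspond under the equivalence, the ring $\Quandles_\star^\ab$ — defined as the endomorphism ring of such a representing object — is carried isomorphically onto $\mathrm{End}(\bbZ[\rmA^{\pm1}])\cong\bbZ[\rmA^{\pm1}]$. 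This upgrades the equivalence of categories to the asserted isomorphism of rings.
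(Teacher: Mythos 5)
Your proof is correct and takes essentially the same route as the paper, whose entire proof is to cite the isomorphism~\eqref{eq:Laurent} together with Proposition~\ref{prop:pointedAlexander}. The additional care you take in upgrading the equivalence of categories to an isomorphism of rings --- checking that the equivalence respects the forgetful functors, so that the free rank-one objects (and hence their endomorphism rings) correspond --- is a legitimate elaboration of what the paper compresses into the word ``canonically'' in Proposition~\ref{prop:pointedAlexander}.
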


\begin{proof}
This follows immediately from the isomorphism~\eqref{eq:Laurent} and Proposition~\ref{prop:pointedAlexander}.
\end{proof} 

\begin{remark}
An abelian rack~$R$ will be always be pointed using the element~$0$. This is possible because~$0\rhd 0=0$. 

The element~$0$ is fixing if and only if~$\rmA=\id$ on~$R$, which implies~\hbox{$\rmE^2=0$}; these are the {\it differential} racks~(see~\cite[Ex.~5.5]{Szymik:2} for this and the following terminology). We get
\begin{equation}\label{eq:abelian_fixing}
\Racks^\ab_{\text{fixing}}\cong\bbZ[\rmE]/(\rmE^2),
\end{equation}
where~$\Racks_{\text{fixing}}$ is the category of pairs~$(R,f)$, where~$R$ is a rack and~$f\in R$ is an element that is fixing. 

The element~$0$ of an abelian rack is fixed if and only if we have~$\rmE=0$, because the defining equation says~\hbox{$\rmE(r)+\rmA(0)=0$} for all~$r$. These are the {\it automorphic} racks. We get
\begin{equation}\label{eq:abelian_fixed}
\Racks^\ab_{\mathrm{fixed}}\cong\bbZ[\rmA^{\pm},\rmE]/(\rmE)\cong\bbZ[\rmA^{\pm}],
\end{equation}
where~$\Racks_{\mathrm{fixed}}$ is the category of pairs~$(R,f)$, where~$R$ is a rack and~$f\in R$ is an element that is fixed. 

The element~$0$ is a unit if and only if~$\rmE=0$ and~$\rmA=1$ on~$R$; in this case, the rack~$R$ is {\it trivial}, because we have~$x\rhd y=y$. We get
\begin{equation}\label{eq:abelian_unit}
\Racks^\ab_{\text{unit}}\cong\bbZ[\rmA^{\pm},\rmE]/(\rmA-1,\rmE)\cong\bbZ,
\end{equation}
where~$\Racks_{\text{unit}}$ is the category of pairs~$(R,u)$, where~$R$ is a rack and~$u\in R$ is a unit. 
\end{remark}

\begin{remark}\label{rem:pullback_of_rings}
Proposition~\ref{prop:pullback_of_rings} can now be rephrased as expressing that the ring~$\Racks^\ab$ is a pullback of the diagram
\[
  \Quandles_\star^\ab \longrightarrow \Sets_\star^\ab \longleftarrow \Racks^\ab_{\mathrm{fixed}}
\]
of rings. Both morphisms are induced by the forgetful morphisms of theories that forgets the binary operation, keeping the chosen elements.
\end{remark}


\subsection{Quillen homology}

Quillen~\cite{Quillen:HomotopicalAlgebra} has defined homology theories in a generality that comprises the most common algebraic categories, such as the categories of models for an algebraic theory~$T$ in the sense of Lawvere~\cite{Lawvere}. His approach is simplicial: he establishes a model category structure on the category of simplicial objects, so that standard techniques from homotopy theory apply. Then he defines the homology objects as the values of the derived functors of the abelianization functor. Given an object~$X$, we can choose a free (or, more generally, a cofibrant) simplicial resolution~$F\simeq X$ of it. For example, the free-forgetful adjunction defines a cotriple~(or comonad) which produces canonical free resolutions. When chosen any such~$F$, then~$F^\ab$ is a simplicial abelian group object, or equivalently, a simplicial module for the ring~$T^\ab$. Its homotopy groups are the homology groups of the associated chain complex, and these are the Quillen homology groups of~$X$ for the theory~$T$. Once again, they can be thought of as~$T^\ab$--modules.

For instance, if the theory~$T$ is the theory of groups, then the Quillen homology groups of a group~$G$ in the sense just described agree~(up to a shift in degrees) with the ordinary homology groups of the group~$G$ in the sense of homological algebra. 

If the theory~$T$ is the theory of racks or quandles, Quillen homology and cohomology is discussed in detail in~\cite{Szymik:2, Szymik:3}, where the Quillen cohomology groups were shown to be isomorphic~(up to a shift in degrees) with cohomology groups defined earlier using less universally applicable methods~\cite{CJKLM,FRS:1,FRS:3,FRS:4}. This point of view has already led to new computations~\cite{Lebed--Szymik} of rack homology. 

When possible, it is better to work relative to a chosen base object~$X$ in order to produce finer invariants. Whereas the abelianization of a knot quandle~$\rmQ_K$ recovers the Alexander invariants of~$K$, and only these, the relative version, the Alexander--Beck module, is stronger: it detects the unknot~\cite{Szymik:2}.


\section{The Milnor theorems}\label{sec:Milnor}

In this section, we identify the homotopy type of the free rack and of the free quandle on any given space, and we will also deal with the pointed cases.

As Milnor does in his paper~\cite{Milnor}, we will work simplicially, so that {\it spaces} are simplicial sets. Topological realization produces topological spaces from simplicial sets and, from a homotopy-theoretic perspective, the choice of model makes no difference. From a technical point of view, the combinatorial approach has the advantage that the {\it simplicial realization}~(or~{\it diagonal}) functor from simplicial spaces~(bisimplicial sets) to spaces is homotopically better behaved. We will write~$\rmS^1$ for a Kan simplicial circle that has a unique vertex.


\subsection{Prolongations and the Fibre Square Theorem}\label{sec:prolongations}

Let~$F$ be a functor from the category of sets to itself. Its {\em prolongation} (also denoted by~$F$) is the functor from the category of spaces to itself that is defined by applying~$F$ level-wise, so that~$(FX)_n=F(X_n)$. When we write a space~$X$ formally in the form~\hbox{$X=\|[n]\mapsto X_n\|$}, that formula becomes
\[
F(X)=\|[n]\longmapsto F(X_n)\|.
\]
If~$X$ was discrete, then the two meanings of~$F(X)$ agree, justifying the notation~$F$ both for the functor and its prolongation.

We will have occasion to use the Fibre Square Theorem of Bousfield and Friedlander~\cite[Thm.~B.4]{Bousfield+Friedlander}~(see also~\cite[Thm.~IV.4.9]{Goerss+Jardine}): the simplicial realization of a diagram
\[
\xymatrix{
V\ar[r]\ar[d]&X\ar[d]\\
W\ar[r]&Y
}
\]
of simplicial spaces is a homotopy pullback if
\begin{enumerate}[noitemsep,topsep=0pt,leftmargin=5em]
\item[(FST 1)] it is a level-wise homotopy pullback of spaces,
\item[(FST 2)] the simplicial spaces~$X$ and~$Y$ satisfy the~$\pi_*$--Kan condition, and
\item[(FST 3)] the map~$\pi_0^\text{vert}X\to\pi_0^\text{vert}Y$ is a fibration of spaces.
\end{enumerate}
As for conditions (FST 2) and (FST 3), it suffices for the purposes of this paper to know that they are automatically satisfied if~$X$ and~$Y$ are level-wise connected.


\subsection{Free simplicial quandles} 

The free quandle functor~$\FQ$ prolongs (see Section~\ref{sec:prolongations}) to a functor from the category of spaces to itself. Here is the analog of Milnor's theorem for this functor. 

\begin{theorem}\label{thm:FQtype}
For every space~$X$, the simplicial quandle~$\FQ(X)$ sits in a homotopy fibration sequence
\[
\FQ(X)\longrightarrow X\times\rmS^1\longrightarrow X\times\rmS^1\!/X
\]
of spaces. To form the quotient, we think of~$X$ as sitting inside~$X\times\rmS^1$ using the unique vertex of~$\rmS^1$.
\end{theorem}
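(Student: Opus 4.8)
The plan is to identify the homotopy type of $\FQ(X)$ by first understanding its set-level structure from Proposition~\ref{prop:FQ} and then promoting that understanding simplicially. Recall that $\FQ(X)$ is the set of conjugates of the generators inside the free group $\rmF(X)$. Set-theoretically, this means $\FQ(X) \cong \bigsqcup_{x \in X} \rmF(X)/C(x)$, where $C(x)$ is the centralizer of the generator $x$; since centralizers of generators in a free group are exactly the cyclic subgroups they generate, we have $\FQ(X) \cong X \times (\rmF(X)/\bbZ)$ in a suitable sense. The idea is to express $\FQ(X)$ as (the quotient appearing in) a quotient $\rmF(X)/\!\sim$ and to compare with Milnor's model $\rmF(X) \simeq \Omega\Sigma X_+$.

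The key step is to realize the claimed fibration sequence as the simplicial realization of a level-wise fibration sequence, using the Fibre Square Theorem from Section~\ref{sec:prolongations}. First I would set up, for each simplicial level $n$, a short exact/fibration-type sequence of sets (discrete spaces) relating $\FQ(X_n)$, $X_n \times \rmS^1_n$, and $X_n \times \rmS^1_n/X_n$. Here the middle term $X \times \rmS^1$ should be thought of as recording a generator together with a loop that conjugates it, and the quotient $X \times \rmS^1/X$ collapses the basepoint-vertex copy of $X$. The plan is then to identify the homotopy fiber of the map $X \times \rmS^1 \to X \times \rmS^1/X$ level-wise, exhibiting it as $\FQ$ applied level-wise. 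Because $\rmS^1$ has a unique vertex, the cofiber/quotient $X \times \rmS^1/X$ is level-wise connected (or can be arranged to satisfy the hypotheses), so conditions (FST 2) and (FST 3) are automatic by the remark following the theorem, and (FST 1) reduces to a level-wise fiber computation.

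The main obstacle I expect is matching the combinatorial conjugation data in $\FQ(X)$ with the loop/suspension data carried by $X \times \rmS^1$, i.e.\ correctly identifying the homotopy fiber of the collapse map level-wise with $\FQ(X_n)$ and checking naturality in $[n]$. Concretely, the point is that the quotient map $\rmF(X) \to \FQ(X)$ (sending $g \mapsto gxg^{-1}$, as in the Remark after Proposition~\ref{prop:FR}) has fibers that are torsors under the central $\bbZ$ generated by $x$, and this $\bbZ$ is precisely what the $\rmS^1$ factor (with its single vertex) contributes homotopically. I would make this precise by building a map of fibration sequences comparing $\FR(X) \to \rmF(X) \to \rmF(X)$-type data (via $\FR(X) \simeq \Omega\Sigma X_+ \times X$ from Theorem~\ref{thm:Milnor}) with the sequence at hand, reducing the verification of (FST 1) to the elementary observation that conjugacy classes of generators are detected by second coordinate and by the coset modulo the cyclic centralizer.

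Finally, once the level-wise homotopy pullback is established, applying the Fibre Square Theorem yields that the realized square is a homotopy pullback, and since the base of the collapse $X \times \rmS^1/X$ is connected with the quotient construction making the realization of $X$ contractible into it along the unique vertex, the resulting homotopy pullback degenerates to the asserted homotopy fibration sequence $\FQ(X) \to X \times \rmS^1 \to X \times \rmS^1/X$.
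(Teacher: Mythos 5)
Your proposal follows essentially the same route as the paper: reduce to the discrete case, where the homotopy fiber of $\bigsqcup_{x\in X}\rmS^1\to\bigvee_{x\in X}\rmS^1$ over the $x$--component is the coset space $\rmF(X)/\langle x\rangle$, identified with the conjugacy class of $x$ because the centralizer of a generator is the cyclic subgroup it generates, and then apply the Fibre Square Theorem level-wise, with level-wise connectivity of the base disposing of (FST~2) and (FST~3). The only inessential difference is your proposed detour through $\FR(X)\simeq\Omega\Sigma X_+\times X$, which the paper does not need; it works directly with the covering-space description of the fiber and then identifies the realizations of the rows using that $?\times\rmS^1$ and the quotient commute with simplicial realization.
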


\begin{proof}
First, we check that the homotopy fiber has the correct homotopy type if the space~$X$ is discrete. In that case, the map~$X\times\rmS^1\to X\times\rmS^1\!/X$ is the quotient map
\[
\bigsqcup_{x\in X}\rmS^1\longrightarrow\bigvee_{x\in X}\rmS^1
\]
that identifies the vertices. The homotopy fiber decomposes as the disjoint union~(indexed by the elements~\hbox{$x\in X$}) of the homotopy fibers of the inclusions of the different circles into the wedge. The inclusion is induced by the inclusion~$\langle x\rangle\leqslant\rmF(X)$ of groups, and on the level of classifying spaces, we get a covering up to homotopy with homotopy fiber the set~$\rmF(X)/\langle x\rangle$ of cosets. The subgroup~$\langle x\rangle$ generated by the element~$x$ is the centralizer of~$x$ in~$\rmF(X)$, so that the set~$\rmF(X)/\langle x\rangle$ is in bijection with the set of conjugates of~$x$ in the group~$\rmF(X)$ in a way that preserves the actions. Together, this shows that the homotopy fiber is equivalent to the disjoint union (over~$x\in X$) of the conjugates of~$x$ in the free group~$\rmF(X)$. Proposition~\ref{prop:FQ} identifies this with~$\FQ(X)$, as claimed. We also note that this identification is, up to homotopy equivalence, natural in~$X$: the above argument shows that the homotopy pullback is homotopy discrete, and the natural map~\hbox{$Y \to \pi_0(Y)$} gives a natural homotopy equivalence between the homotopy pullback and the set~$\FQ(X)$.

If~$X$ is not discrete, then we can use the Fibre Square Theorem. By the first part of the proof, we have homotopy pullback squares
\[
\xymatrix{
\FQ(X_n)\ar[r]\ar[d]&\star\ar[d]\\
X_n\times\rmS^1\ar[r]&X_n\times\rmS^1\!/X_n,
}
\]
and the spaces on the right hand side are clearly connected. By the Fibre Square Theorem, the situation realizes to a homotopy pullback
\[
\xymatrix{
\|[n]\mapsto\FQ(X_n)\|\ar[r]\ar[d]&\|[n]\mapsto\star\ar[d]\|\\
\|[n]\mapsto X_n\times\rmS^1\|\ar[r]&\|[n]\mapsto X_n\times\rmS^1\!/X_n\|.
}
\]
For the upper row, we have~$\FQ(X)=\|[n]\mapsto\FQ(X_n)\|$ by definition, and trivially we also have~\hbox{$\star=\|[n]\mapsto\star\|$}. For the lower row, we get
\[
\|[n]\mapsto X_n\times\rmS^1\|\cong\|[n]\mapsto X_n\|\times\rmS^1=X\times\rmS^1,
\]
because the functor~$?\mapsto?\times\rmS^1$ is a left adjoint that, therefore, commutes with all colimits such as the simplicial realization. Similarly 
\[
\|[n]\mapsto X_n\times\rmS^1\!/X_n\|\cong X\times\rmS^1\!/X,
\]
finishing the proof.
\end{proof}

\begin{remark}
The natural map
\begin{equation}\label{eq:quandle_unit}
X\longrightarrow\FQ(X)\simeq\hofib(X\times\rmS^1\to X\times\rmS^1\!/X),
\end{equation}
from the space~$X$ to the free quandle generated by it, can be described as follows: it sends a point~$x$ in~$X$ to the point in the homotopy fiber that is given by the point~$(x,\star)$ in~$X\times\rmS^1$ together with the constant path at~$[\,x,\star\,]$ in the quotient.
\end{remark}

\begin{remark}
The quotient space~$X\times\rmS^1\!/X$ can also be described, up to isomorphism, as~$\Sigma X_+$, the~(reduced) suspension of~$X$ with a disjoint base-point. In any event, this space comes with a map to~$\rmS^1$, and this map admits a retraction if~$X$ is not empty.
\end{remark}

\begin{remark}
There is a canonical map from the free quandle on a space~$X$ to the free group on~$X$. With the models that we have now, this map is induced by the commutative diagram
\[
\xymatrix{
X\times\rmS^1\ar[r]\ar[d]&\Lambda\Sigma X_+\ar[d]\\
X\times\rmS^1\!/X\ar@{=}[r]&\Sigma X_+
}
\]
by passing to vertical homotopy fibers: the map on the right is the evaluation map at the base-point from the free loop space, and the map on the top sends a point~$(x,z_0)$ on the cylinder to the loop~$z\mapsto[\,x,z\cdot z_0\,]$.
\end{remark}

\begin{remark}
It is instructive to make the quandle structure on the set of components of the homotopy fiber explicit. A point in the homotopy fiber is a point in~$X\times\rmS^1$ together with a path in the quotient~$X\times\rmS^1\!/X$ from that point to the base-point. (Note that this is not the same as a path in~$X\times\rmS^1$ that ends in~$X$!) The set~$\pi_0\FQ(X)$ of homotopy classes of such paths has an action of the fundamental group of~$X\times\rmS^1\!/X$ at the base-point by post-composition of loops. There is a map
\[
f\colon\pi_0\FQ(X)\longrightarrow\pi_1(X\times\rmS^1\!/X)
\]
that sends the class of a path~$\xi$ to the composition~$\xi\lambda_{\xi(0)}\xi^{-1}$, where~$\lambda_{\xi(0)}$ is the~(class of the) canonical loop around~$X\times\rmS^1$ that starts and ends at the starting point~$\xi(0)$ of~$\xi$. Because~\hbox{$(\gamma\cdot\xi)(0)=\xi(0)$} for all elements~$\gamma$ in the fundamental group, it follows that~$f$ is equivariant with respect to the given action of the fundamental group on the left hand side and the conjugation action on the right hand side. In other words, we have equipped~$\pi_0\FQ(X)$ with the structure of a crossed~$\pi_1$--set. As a result, setting~\hbox{$\xi\rhd\eta=f(\xi)\cdot\eta$} defines a rack structure on it. It remains for us to check that this a quandle structure, which means that~$f(\xi)$ lies in the stabilizer of~$\xi$, which means that the element
\[
\xi\rhd\xi=f(\xi)\cdot\xi=\xi\lambda_{\xi(0)}\xi^{-1}\xi
\]
is homotopic to~$\xi$ relative to the endpoint~$\xi(1)$. Indeed, the first part~\hbox{$\lambda_{\xi(0)}\xi^{-1}\xi$} can be shrunk to the constant path at~$\xi(0)$.
\end{remark}


\subsection{Free simplicial racks} 

Here is the analog of Milnor's theorem for the free rack functor.

\begin{theorem}\label{thm:FRtype}
For every space~$X$, the simplicial rack~$\FR(X)$ comes with a natural homotopy equivalence
\[
\FR(X)\simeq\Omega\Sigma X_+\times X.
\]
\end{theorem}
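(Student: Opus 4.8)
The plan is to avoid the Fibre Square Theorem entirely and instead exploit the feature that distinguishes racks from quandles here: by Proposition~\ref{prop:FR}, the underlying \emph{set} of a free rack is already an honest cartesian product~$\rmF(X)\times X$, with no quotient or coincidence of elements to worry about. My first step would be to promote this set-level model to an identification of functors. For a map of sets~$f\colon X\to Y$, the induced rack homomorphism~$\FR(f)$ sends each generator~$(e,x)$ to~$(e,f(x))$; and since every element~$(g,x)$ is obtained from a generator by letting~$g\in\rmF(X)$ act through the rack operation and its inverse, an induction on the word length of~$g$ using the formula~$(g,x)\rhd(h,y)=(gxg^{-1}h,y)$ and the homomorphism property shows that~$\FR(f)(g,x)=(\rmF(f)(g),f(x))$. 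Thus the underlying-set functor of~$\FR$ is naturally isomorphic to the product functor~$X\mapsto\rmF(X)\times X$, where~$\rmF$ denotes the underlying set of the free group.

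Next I would prolong. Because products of spaces are formed level-wise, the prolongation of a product of set-valued functors is the product of their prolongations; applying this to the identification just obtained, the underlying space of the simplicial rack is
\[
\FR(X)\cong\|[n]\mapsto\rmF(X_n)\times X_n\|\cong\rmF(X)\times X,
\]
the cartesian product of simplicial sets, where~$\rmF(X)$ is now Milnor's free simplicial group on~$X$. The isomorphism is natural in~$X$, the face and degeneracy maps on both sides being the evident products.

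Finally I would invoke Milnor's theorem in the form recalled in the introduction: there is a natural homotopy equivalence~$\rmF(X)\simeq\Omega\Sigma X_+$. Since geometric realization preserves finite products, and a product of weak equivalences of spaces is again a weak equivalence, multiplying this equivalence by the identity of~$X$ produces a natural homotopy equivalence
\[
\FR(X)\cong\rmF(X)\times X\simeq\Omega\Sigma X_+\times X
\]
of underlying spaces, which is the assertion.

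The only point that requires care --- and hence the main obstacle, such as it is --- is the functoriality claim in the first step: one must verify that the set-level isomorphism~$\FR(X)\cong\rmF(X)\times X$ is compatible with all maps~$f\colon X\to Y$, so that it survives prolongation as an isomorphism of \emph{simplicial} sets carrying the product face and degeneracy maps. Once that compatibility is secured, no genuine homotopy theory intervenes: unlike the proof of Theorem~\ref{thm:FQtype}, the rack case never leaves the world of strict cartesian products, so the hypotheses of the Fibre Square Theorem need not be checked at all.
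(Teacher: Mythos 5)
Your proposal is correct, and its key identification --- the natural isomorphism of set-valued functors $\FR(X)\cong\rmF(X)\times X$ combined with Milnor's theorem --- is exactly the first step of the paper's proof. Where you genuinely diverge is in the passage from discrete sets to general simplicial sets. The paper handles this ``along the same lines as Theorem~\ref{thm:FQtype}'', i.e.\ by exhibiting the level-wise homotopy fibration $\FR(X_n)\to X_n\to\Sigma X_{n,+}$ and realizing it with the Bousfield--Friedlander Fibre Square Theorem; for that route one only needs the discrete case of Milnor's theorem (a wedge of circles is aspherical with free fundamental group), and the rack and quandle cases are treated uniformly. You instead observe that the bijection of Proposition~\ref{prop:FR} is natural in $X$ (your induction on word length is the right verification, or one can simply check that $(\rmF(f),f)$ is a rack homomorphism agreeing with $\FR(f)$ on generators and invoke uniqueness), so that it prolongs to an isomorphism of simplicial sets $\FR(X)\cong\rmF(X)\times X$, after which the full simplicial form of Milnor's theorem $\rmF(X)\simeq\Omega\Sigma X_+$ finishes the argument with no fibration-realization machinery at all. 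This is a legitimate and arguably cleaner deduction; the trade-off is that you import Milnor's theorem for arbitrary simplicial sets as a black box (which the paper does cite, so you are entitled to it), whereas the paper's argument is self-contained modulo the Fibre Square Theorem and keeps the rack proof structurally parallel to the quandle proof, where no such product decomposition is available.
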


\begin{proof}
First, for discrete spaces~$X$, the identification up to homotopy of the left hand side goes via the natural bijection~\hbox{$\FR(X)\cong\rmF(X)\times X$}, and Milnor's theorem~\hbox{$\rmF(X)\simeq\Omega\Sigma X_+$} for the free group functor. The second part of the argument follows the same lines as that of Theorem~\ref{thm:FQtype} for quandles. 
\end{proof}

\begin{remark}
Another way to phrase this result, which shows the analogy to the quandle case, is to say that there is homotopy fibration sequence
\[
\FR(X)\longrightarrow X\longrightarrow\Sigma X_+.
\]
The map on the right is the composition~$X\cong X\times\{\star\}\to X\times\rmS^1\to X\times\rmS^1\!/X\cong\Sigma X_+$, and this composition is automatically null-homotopic: the composition is a (homotopy) cofibration sequence.
\end{remark}


\subsection{The pointed situations}

When~$(X,p)$ is a pointed space, the inclusion~$X\cong X\times\star\to X\times\rmS^1$ factors through the subspace~\hbox{$X\vee\rmS^1$}, the wedge sum along~$p$. We can look at the inclusion~\hbox{$X\vee\rmS^1\to X\times\rmS^1$}, and wonder about the homotopy fiber of the composition~$X\vee\rmS^1\to X\times\rmS^1\to X\times\rmS^1\!/X$. This leads to the analog of Milnor's theorem for the free pointed rack functor.

\begin{theorem}\label{thm:free_pointed_rack}
The free pointed rack~$\FR_\star(X,p)$ on a pointed space~$(X,p)$ is naturally equivalent to the homotopy fiber of the map
\[
X\vee\rmS^1\longrightarrow X\times\rmS^1\!/X.
\]
\end{theorem}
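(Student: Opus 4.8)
The plan is to follow the same two-step strategy used for Theorems~\ref{thm:FQtype} and~\ref{thm:FRtype}: first establish the identification when $(X,p)$ is a discrete pointed set, and then bootstrap to arbitrary pointed spaces via the Fibre Square Theorem applied level-wise. For the discrete case, I would compute the homotopy fiber of $X\vee\rmS^1\to X\times\rmS^1\!/X$ and match it against the explicit model $\FR_\star(X,p)=\FR(X)/\!\sim$ from Proposition~\ref{prop:free_pointed_rack}, using the set-theoretic decomposition of Remark~\ref{rem:set-theoretic}. The key observation is that $X\times\rmS^1\!/X\cong\Sigma X_+$ is simply connected, so homotopy fibers over it are controlled by the fundamental group of the base and by $\pi_1$ of the total space $X\vee\rmS^1$; the latter is a free group, namely $\rmF(X)$ generated by the circle components sitting at each point of $X$ together with the distinguished circle at $p$.

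The heart of the discrete computation is to track how the base-point identification in $\FR_\star(X,p)$ corresponds to quotienting the ``free rack'' fiber by the circle at $p$. Concretely, for the unpointed rack Theorem~\ref{thm:FRtype} one takes the homotopy fiber of $X\to\Sigma X_+$, which is $\Omega\Sigma X_+\times X\simeq\rmF(X)\times X$; here, replacing $X$ by $X\vee\rmS^1$ adds precisely one extra circle, the circle at $p$, into the total space. I would argue that this extra circle has the effect of collapsing the $\rmF(X)$-torsor sitting over the point $p$ down to the set of conjugates of $p$, exactly reproducing the quotient $(\rmF(X)/\!\sim)\times\{p\}$ from Remark~\ref{rem:set-theoretic}, while leaving the components over $X\setminus\{p\}$ as full copies of $\rmF(X)$. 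Thus the homotopy fiber is set-theoretically $\big(\rmF(X)\times(X\setminus\{p\})\big)\cup\big(\{gpg^{-1}\}\times\{p\}\big)$, matching $\FR_\star(X,p)$ on the nose, and one must check that this bijection is natural in $(X,p)$ up to homotopy, using that the homotopy fiber is homotopy-discrete in the discrete case.

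For the general case, I would set up the level-wise homotopy pullback squares
\[
\xymatrix{
\FR_\star(X_n,p_n)\ar[r]\ar[d]&\star\ar[d]\\
X_n\vee\rmS^1\ar[r]&X_n\times\rmS^1\!/X_n
}
\]
coming from the discrete result, verify that the bottom-right spaces $X_n\times\rmS^1\!/X_n\cong\Sigma (X_n)_+$ are connected so that conditions (FST~2) and (FST~3) hold automatically, and then invoke the Fibre Square Theorem to conclude that the simplicial realization is a homotopy pullback. Since the functors $?\mapsto{?}\vee\rmS^1$ and $?\mapsto{?}\times\rmS^1\!/{?}$ commute with the simplicial realization (the former because the wedge along the base-point is a pushout and realization preserves colimits, the latter as in the proof of Theorem~\ref{thm:FQtype}), the realized diagram identifies $\FR_\star(X,p)$ with $\hofib(X\vee\rmS^1\to X\times\rmS^1\!/X)$ as desired.

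\textbf{Main obstacle.} The step I expect to be most delicate is the discrete-case bookkeeping of \emph{which} $\rmF(X)$-torsor gets collapsed and how the circle at $p$ acts. One has to see clearly that adjoining the circle at $p$ to form the wedge $X\vee\rmS^1$ modifies the covering-space geometry over the base only in the fiber above $p$, and that the resulting identification $g\sim gp^k$ is precisely the power-of-$p$ relation defining $\FR_\star$. Getting the equivariance and naturality of this bijection correct—rather than merely an abstract cardinality match—is where care is required; the cleanest route may be to describe the fiber directly via paths and loops (as in the final remark of the quandle section) and read off the rack operation from the $\pi_1$-action, then compare with the formula $(g,x)\rhd(h,y)=(gxg^{-1}h,y)$ descended to the quotient.
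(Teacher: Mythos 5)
Your proposal follows essentially the same route as the paper's (very terse) proof: handle the discrete case by matching the homotopy fiber of $X\vee\rmS^1\to X\times\rmS^1\!/X$ against the model of Proposition~\ref{prop:free_pointed_rack} via the decomposition of Remark~\ref{rem:set-theoretic} --- full copies of $\rmF(X)$ over the points of $X\setminus\{p\}$ and the coset space $\rmF(X)/\langle p\rangle$, i.e.\ the conjugates of $p$, over the circle at $p$ --- and then pass to general $X$ by the level-wise Fibre Square Theorem argument from Theorem~\ref{thm:FQtype}. One caution: two assertions in your setup paragraph are false and should be deleted, namely that $X\times\rmS^1\!/X\cong\Sigma X_+$ is simply connected (for discrete $X$ it is a wedge of circles with $\pi_1\cong\rmF(X)$) and that $\pi_1(X\vee\rmS^1)\cong\rmF(X)$ (for discrete $X$ the only circle in $X\vee\rmS^1$ is the one at $p$, so the $p$--component has $\pi_1\cong\bbZ$ and the other components are points); fortunately your actual covering-space computation in the second paragraph does not rely on either claim and is correct as written.
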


\begin{proof}
If~$X$ is discrete, then the homotopy fiber is the disjoint union of the conjugacy class of the generator~$p$ in~$\rmF(X)$ with~$\rmF(X)\times (X\setminus p)$. It comes with a distinguished element~$p$. 
This matches the description of the free pointed rack that we have given in Proposition~\ref{prop:free_pointed_rack} and Remark~\ref{rem:set-theoretic}. For general spaces~$X$, we conclude as in the proof of Theorem~\ref{thm:FQtype}.
\end{proof}

\begin{theorem}\label{thm:free_fixed_rack}
The free fixed rack~$\FR_{\mathrm{fixed}}(X,p)$ on a pointed space~$(X,p)$ is naturally equivalent to the homotopy fiber of the map
\[
(\rmS^1 \times \Sigma X) \vee X \longrightarrow \rmS^1 \times \Sigma X
\]
given by projection onto the first wedge factor.
\end{theorem}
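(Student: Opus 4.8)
The plan is to imitate the proof of Theorem~\ref{thm:free_pointed_rack}, establishing the equivalence first on discrete spaces and then bootstrapping to general~$X$ via the Fibre Square Theorem. The principal task is therefore to understand the discrete case, where everything must match the set-theoretic decomposition of~$\FR_{\mathrm{fixed}}(X,p)$ recorded in Remark~\ref{rem:set-theoretic_2}, namely a copy of~$\rmF(X\setminus\{p\})\times\rmF(\{p\})$ for each generator~$x\neq p$, together with a single point~$[\,e,p\,]$ coming from the base-point.

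First I would analyze the map~$(\rmS^1\times\Sigma X)\vee X\to\rmS^1\times\Sigma X$ for discrete~$X$. Here~$X$ is a disjoint union of points, one of which is~$p$; the wedge is formed along~$p$. On fundamental groups, the target~$\rmS^1\times\Sigma X$ has~$\pi_1\cong\bbZ$ generated by the~$\rmS^1$ factor~(since~$\Sigma X$ is a wedge of circles but suspension kills~$\pi_1$ of a discrete space up to the reduced suspension being a wedge---so one must be careful: $\Sigma X$ for discrete $X$ is a wedge of~$|X|-1$ circles, and~$\pi_1(\rmS^1\times\Sigma X)\cong\bbZ\times\rmF(X\setminus\{p\})$). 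The source~$(\rmS^1\times\Sigma X)\vee X$ adds, for each point~$x\neq p$ in the wedged-on copy of~$X$, a new~$0$--cell, hence a new generating loop after choosing paths, so its~$\pi_1$ is the free product of~$\pi_1(\rmS^1\times\Sigma X)$ with a free group on~$X\setminus\{p\}$. The projection kills these new free generators.

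The key computation is then the homotopy fiber of this projection. I would identify the fiber by passing to universal covers or, equivalently, by recognizing the map as~$\pi_1$--surjective with the fiber controlled by the kernel and the added cells: over the base point of~$\rmS^1\times\Sigma X$ the fiber should break up, by covering-space theory, into one contractible piece per coset, reproducing a disjoint copy of~$\pi_1(\rmS^1\times\Sigma X)\cong\rmF(\{p\})\times\rmF(X\setminus\{p\})$ for each generator~$x\neq p$ coming from the wedged-on cells, plus the single component fixed by the base-point~$p$. This is exactly the decomposition of Remark~\ref{rem:set-theoretic_2}, and I would check that the rack operation induced on~$\pi_0$ of the fiber (via the crossed-module/conjugation structure as in the quandle case) matches the formula of Proposition~\ref{prop:FR} descended through the fixing identification~\eqref{eq:central_epi}; the role of the product~$\rmS^1\times\Sigma X$ rather than a wedge is precisely what forces the~$\rmF(\{p\})$ factor to act \emph{centrally}, which is the defining feature of the fixed base-point.

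For non-discrete~$X$ I would conclude exactly as in Theorem~\ref{thm:FQtype}: the squares are level-wise homotopy pullbacks with connected base spaces, so the~$\pi_*$--Kan and fibration conditions (FST~2) and (FST~3) hold automatically, and the Fibre Square Theorem promotes the level-wise statement to a statement about the simplicial realizations, using that the functors~$?\times\rmS^1$, suspension, and wedge all commute with realization as left adjoints. The main obstacle I anticipate is the bookkeeping in the discrete case: correctly matching the central~$\rmF(\{p\})$--action arising from the product factor~$\rmS^1\times\Sigma X$ with the kernel of the surjection~\eqref{eq:central_epi}, and verifying that the point coming from~$p$ itself collapses to a single component (rather than a copy of~$\bbZ$) precisely because~$p$ is made a fixed element.
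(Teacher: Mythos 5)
Your overall strategy---identify the homotopy fiber for discrete~$X$ by decomposing over the path components of the source, match the result against Remark~\ref{rem:set-theoretic_2}, and then promote to general~$X$ with the Fibre Square Theorem---is exactly the paper's proof, and your final description of the fiber (one copy of~$\bbZ\times\rmF(X\setminus\{p\})$ for each~$x\neq p$, together with a single point over~$p$) is correct.

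One intermediate claim is wrong, however, and you should excise it. For discrete~$X$ the wedged-on points~$x\neq p$ are \emph{isolated} points of~$(\rmS^1\times\Sigma X)\vee X$; they do not become ``new generating loops after choosing paths,'' and the fundamental group of the source is \emph{not} the free product~$\pi_1(\rmS^1\times\Sigma X)\ast\rmF(X\setminus\{p\})$. The source is simply disconnected, and its fundamental group at the base-point is~$\pi_1(\rmS^1\times\Sigma X)\cong\bbZ\times\rmF(X\setminus\{p\})$, unchanged. This is not a harmless slip: if the source really were connected with that free product as~$\pi_1$ and the projection were~$\pi_1$--surjective, the homotopy fiber would be connected, contradicting the (correct) answer you write down in the next paragraph. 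The correct mechanism, which your second paragraph in effect uses, is that the restriction of the projection to the component~$\rmS^1\times\Sigma X$ is the identity, with contractible homotopy fiber giving the single point~$[\,e,p\,]$, while the restriction to each isolated point~$x\neq p$ is the inclusion of a point into the aspherical space~$\rmS^1\times\Sigma X$, whose homotopy fiber is~$\Omega(\rmS^1\times\Sigma X)\simeq\bbZ\times\rmF(X\setminus\{p\})$, homotopy discrete. With that correction the argument coincides with the paper's; your additional care in checking the induced rack structure on~$\pi_0$ and in attributing the centrality of the~$\rmF(\{p\})$--factor to the product (rather than wedge) in~$\rmS^1\times\Sigma X$ is a welcome elaboration that the paper leaves implicit, and your application of the Fibre Square Theorem is unproblematic since the relevant right-hand column of the level-wise squares ($\star\to\rmS^1\times\Sigma X_n$) consists of connected spaces.
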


\begin{proof}
Let us first assume that~$X$ is discrete. In that case, the homotopy fiber decomposes as a disjoint union, indexed by the path components of~$(\rmS^1 \times \Sigma X) \vee X$. 
The homotopy fiber over~$p$ is contractible.
The homotopy fiber over an element~$q \neq p$ is
\[
\Omega (\rmS^1 \times \Sigma X)\simeq\Omega\rmS^1 \times \Omega\Sigma X\simeq\bbZ \times\rmF(X \setminus \{p\}),
\]
the quotient of~$\rmF(X)$ by the subgroup generated by the commutators with~$p$. This matches the description of the free fixed rack that we have given in~\eqref{eq:central_epi}, Proposition~\ref{prop:free_fixed_rack} and Remark~\ref{rem:set-theoretic_2}. For general spaces~$X$, we conclude as in the proof of Theorem~\ref{thm:FQtype}.
\end{proof}

For quandles, the situation is easier. From Proposition~\ref{prop:free_pointed_quandle} it is clear that the free pointed quandle~$\FQ_\star(X,p)$ has the same underlying space as the free quandle~$\FQ(X)$. It only remains to see which element becomes the base-point. That must be the image of~$p$ under the map~\eqref{eq:quandle_unit} from~$X\to\FQ(X)$. We get:

\begin{theorem}\label{thm:free_pointed_quandle}
The free pointed quandle~$\FQ_\star(X,p)$ on a pointed space~$(X,p)$ is given, up to natural equivalence, by the homotopy fiber of the map~$X\times\rmS^1\longrightarrow X\times\rmS^1\!/X$, pointed at the image of~$p$ under the map~\eqref{eq:quandle_unit}: the point~$(p,\star)$ in~$X\times\rmS^1$ together with the constant path at~$[\,p,\star\,]$ in the quotient.
\end{theorem}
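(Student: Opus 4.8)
The plan is to deduce this from Theorem~\ref{thm:FQtype} together with the set-level computation of the free pointed quandle, so that the base-point becomes the only piece of new data that needs tracking. First I would observe that the free pointed quandle functor~$\FQ_\star$ prolongs level-wise to pointed spaces, exactly as~$\FQ$ does, and that its underlying simplicial quandle is~$\FQ(X)$. This is the simplicial incarnation of Proposition~\ref{prop:free_pointed_quandle}: at each simplicial level~$n$ one has~$\FQ_\star(X_n,p_n)=(\FQ(X_n),p_n)$, where~$p_n$ is the degenerate image of the base-point vertex. Since the identification of Proposition~\ref{prop:free_pointed_quandle} is natural in the pointed set~$(X_n,p_n)$, it is compatible with all face and degeneracy maps and therefore assembles into an identification of simplicial pointed quandles. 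Forgetting the base-point, the underlying space is literally~$\FQ(X)$.

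Next I would invoke Theorem~\ref{thm:FQtype}, which supplies a natural homotopy equivalence~$\FQ(X)\simeq\hofib(X\times\rmS^1\to X\times\rmS^1\!/X)$. Because the identification above is natural in~$(X,p)$ and the equivalence of Theorem~\ref{thm:FQtype} is natural in~$X$, the two combine into an equivalence that is natural in the pointed space~$(X,p)$. At this stage only the location of the base-point remains to be pinned down.

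The base-point of~$\FQ_\star(X,p)$ is, by construction, the image of~$p$ under the unit map~$X\to\FQ(X)$ of the free-forgetful adjunction. Under the equivalence of Theorem~\ref{thm:FQtype} this unit map is precisely the map~\eqref{eq:quandle_unit}, whose explicit description was recorded immediately after that theorem: a vertex~$x$ of~$X$ goes to the point~$(x,\star)$ of~$X\times\rmS^1$ together with the constant path at~$[\,x,\star\,]$ in the quotient. Evaluating at the base-point vertex~$p$ yields exactly the claimed point of the homotopy fiber. The argument is thus entirely formal; the only point demanding care is the compatibility of the base-point with the natural equivalence, and this is supplied precisely by the description of~\eqref{eq:quandle_unit}, so no genuine obstacle arises.
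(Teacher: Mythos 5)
Your proposal is correct and follows essentially the same route as the paper: reduce to the unpointed case via the set-level identification of Proposition~\ref{prop:free_pointed_quandle}, apply Theorem~\ref{thm:FQtype} for the homotopy type, and locate the base-point as the image of~$p$ under the unit map~\eqref{eq:quandle_unit}. The extra care you take with levelwise naturality is implicit in the paper's brief argument but not a different idea.
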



\section{The stable homotopy theory of racks and quandles}\label{sec:stable}

The stable homotopy theory associated with any pointed Lawvere theory~$T$ is determined by a ring spectrum~$T^\st$. In this section, we identify these ring spectra for the theories of pointed racks and pointed quandles.


\subsection{Stable homotopical algebra}

Similarly to the situation for abelianization and Quillen homology, the general theory of stabilization and stable homotopy is valid in contexts that comprise any pointed Lawvere theory~$T$. References are Segal~\cite{Segal}, Anderson~\cite{Anderson:1,Anderson:2,Anderson:3,Anderson:4}, Lydakis~\cite{Lydakis}, Schwede~\cite{Schwede:Cambridge},~\cite{Schwede:Topology}, and the book~\cite{DGM}.

We will use the finite pointed sets~$n_+=\{0,1,\dots,n\}$, pointed at~$0$, as our standard models; note that~$0_+=\star$. Recall that a{\it~$\Gamma$--space} is a functor~$X$ from the category of those finite pointed sets to the category of spaces that satisfies~$X(\star)=\star$. A~$\Gamma$--space is{\it~special} if the Segal maps~\hbox{$X(n_+)\to X(1_+)^n$} are equivalences. This induces an abelian monoid structure on~$\pi_0X(1_+)$. If this abelian monoid is a group, then~$X$ is called{\it~very special}. There is a homotopy theory of~$\Gamma$--spaces that models all connective spectra~\cite{Bousfield+Friedlander}. The category of~$\Gamma$--spaces admits a symmetric monoidal smash product~\cite{Lydakis} that allows us to do algebra with{\it~$\Gamma$--rings}~(see~\cite{Schwede:Cambridge}): monoids in~$\Gamma$--spaces.

As in Section~\ref{sec:prolongations}, every~$\Gamma$--space extends to a~{\it simplicial functor}~$F$: an enriched, pointed functor from the category of finite pointed spaces to the category of pointed spaces that satisfied~\hbox{$F(\star)=\star$}. The enrichment gives us natural maps
\[
\alpha_F(X,Y,Z)\colon X\wedge F(Y)\wedge Z\longrightarrow F(X\wedge Y\wedge Z),
\]
sometimes called the{\it~assembly maps} of~$F$. For instance, we can take~$X=\rmS^0$ and~$Z$ to be the circle~$\rmS^1$. Then assembly gives us a natural map~$\Sigma F(Y)\to F(\Sigma Y)$, and the adjoint is a natural transformation~\hbox{$F\to\Omega F\Sigma$}. We can iterate the construction to obtain the{\it~stabilization}~$F^\st$ of~$F$ as the colimit~$\colim_k\Omega^kF\Sigma^k$. 

The restriction of any simplicial functor~$F$ to the spheres gives a{\it~symmetric spectrum}~(see~\cite{HSS}): the spaces~$F(\rmS^n)$, for integers~$n\geqslant 0$, all come with canonical~$\Sigma_n$--actions induced by the canonical~$\Sigma_n$--actions on the spheres~\hbox{$\rmS^n=(\rmS^1)^{\wedge n}$}, and the~$\Sigma_m \times \Sigma_n$-equivariant assembly maps~\hbox{$F(\rmS^m)\wedge\rmS^n\to F(\rmS^{m+n})$} endow the sequence of~$\Sigma_n$--spaces~$F(\rmS^n)$ with the structure of a symmetric spectrum. We will use the same notation for a functor~$F$ and its associated spectrum~$F$. There is a homotopy theory of symmetric spectra that models all stable homotopy types of spectra. The category of symmetric spectra admits a symmetric monoidal smash product that allows us to do algebra with{\it~symmetric ring spectra}: monoids in symmetric spectra.


If~$G$ is a monad, then so is~$\Omega G\Sigma$, and the natural transformation~$G\to\Omega G\Sigma$ is a morphism of monads. If~$G$ commutes with enough colimits (for example, if~$G$ is a left adjoint), then the colimit is also a monad, and the natural transformation~\hbox{$G\to G^\st$} to the stablization~$G^\st$ is a morphism of monads. The spectrum constructed above is a symmetric ring spectrum. The unit is given by the unit~$\rmS^n\to G(\rmS^n)$ of the monad, and the multiplication is given by the composite
\[
G(\rmS^m)\wedge G(\rmS^n)\longrightarrow
G(G(\rmS^m\wedge\rmS^n))\longrightarrow
G(\rmS^m\wedge\rmS^n),
\]
where the last map is the multiplication of the monad, and the first map is the composite~$G(\alpha_H(\rmS^m,\rmS^n,\rmS^0))\circ\alpha_G(\rmS^0,\rmS^m,H(\rmS^n))$.

\begin{remark}
Some care has to be taken in the choice of the multiplication. Our choice uses the composition
\[
\xymatrix@C=7em{
G(\rmS^m)\wedge H(\rmS^n)\ar[r]^-{\alpha_G(\rmS^0,\rmS^m,H(\rmS^n))}
& G(\rmS^m\wedge H(\rmS^n))\ar[r]^-{G(\alpha_H(\rmS^m,\rmS^n,\rmS^0))}
& G(H(\rmS^m\wedge\rmS^n)),
}
\]
which is defined for all simplicial functors~$G$ and~$H$, in the special case~$G=H$. There is also the composition
\[
\xymatrix@C=7em{
G(\rmS^m)\wedge H(\rmS^n)\ar[r]^-{\alpha_H(G(\rmS^m),\rmS^n,\rmS^0)} & 
H(G(\rmS^m)\wedge\rmS^n)\ar[r]^-{H(\alpha_G(\rmS^0,\rmS^m,\rmS^n))}
& H(G(\rmS^m\wedge\rmS^n))
}
\]
that switches the order of the functors. If~$G=H$, this map differs from the one above by a twist and it leads to the opposite multiplication on the symmetric ring spectrum.
\end{remark}

\begin{remark}\label{rem:comm}
We are not aware of natural assumptions on a functor~$G$ where the stabilization~$G^\st$ is a ring spectrum that is commutative. These spectra come to us as only associative, even if they look commutative in many of the examples. For instance, for our examples, we prove this below in Proposition~\ref{prop:comm}. It is shown in~\cite{Lawson} that the commutative ring spectra that come from commutative~$\Gamma$--rings are rather special: they have trivial Dyer--Lashof operations of positive degree on their zeroth homology group. For instance, the free~$\rmE_\infty$--ring spectrum on a~$0$--cell cannot be modeled as a commutative~$\Gamma$--ring.
\end{remark}


\subsection{Stable homotopy of algebraic theories}

Schwede~\cite[Thm.~4.4]{Schwede:Topology} has shown that a pointed Lawvere theory~$T$ determines functorially a ring spectrum~$T^\st$ such that the stable homotopy theory of~$T$--models (i.e.~the homotopy theory of spectra in~$T$--models) is equivalent to the homotopy theory of module spectra over the ring spectrum~$T^\st$. He constructs~$T^\st$ as the~$\Gamma$--ring for the monad~$\rmF_T$ that comes with the Lawvere theory~$T$, sending a pointed space to the underlying space of the free~$T$--model on it.

\begin{remark}\label{rem:stable_homotopy_operations}
There is a~$1$--connected map~$T^\st\to\rmH T^\ab$ of ring spectra, where the target is the Eilenberg--Mac Lane spectrum of the discrete ring~$T^\ab$. In particular, there is an isomorphism
\begin{equation}\label{eq:pi0iso}
\pi_0T^\st\cong T^\ab,
\end{equation}
see~\cite[Thm.~5.2]{Schwede:Topology}. As for the meaning of the higher homotopy groups, it is clear that the graded homotopy ring~$\pi_*T^\st$ of the ring spectrum~$T^\st$ is isomorphic to the ring of stable homotopy operations of~$T$--algebras (see~\cite[4.11]{Schwede:Topology}): given a class in the homotopy~$\pi_nX$ of a simplicial~$T$--model~$X$, we can represent it as a map~$\rmF_T(\rmS^n)\to X$ of simplicial~$T$--models and then precompose with any class in~\hbox{$[\,\rmF_T(\rmS^{n+d}),\rmF_T(\rmS^n)\,]_T\cong\pi_{n+d}\rmF_T(\rmS^n)$}to get a new class in~$\pi_{n+d}X$, and stabilization leads to the result.
\end{remark}

\begin{remark}\label{rem:Quillen_homology}
Schwede also explains the relation to Quillen homology: there is a natural weak equivalence
\begin{equation}\label{eq:homology}
\rmH T^\ab\wedge_{T^\st}^{\mathbb{L}}\Sigma_T^\infty C\overset{\simeq}{\longrightarrow}\rmH C^\ab
\end{equation}
of spectra for any cofibrant~$T$--model~$C$. If~$C$ is a cofibrant resolution of a~$T$--model~$X$, then the right hand side computes the Quillen homology of~$X$. So, we get Hurewicz spectral sequences
\[
\rmH_{p+q}X\Longleftarrow\Tor_p^{\pi_*T^\st}(T^\ab,\pi_*\Sigma_T^\infty X)_q,
\]
that compute the Quillen homology of~$X$ from the stable homotopy~$\pi_*\Sigma_T^\infty X$ of it. In particular, this leads to Hurewicz and Whitehead theorems~\cite[Cor.~5.3, 5.4]{Schwede:Topology}: a map of simply-connected~$T$--models which induces an isomorphism in Quillen homology is a weak equivalence. We refer to~\cite[5.5]{Schwede:Topology} for universal coefficient and Atiyah--Hirzebruch spectral sequences that compute the Quillen homology with coefficients and the generalized homology of simplicial~$T$--models, respectively.
\end{remark}


\subsection{Stabilization for quandles}\label{sec:quandle_stab}

From Corollary~\ref{cor:Qstar}, we have isomorphism~$\Quandles_\star^\ab\cong\bbZ[\rmA^{\pm1}]$ of rings. The Laurent polynomial ring~$\bbZ[\rmA^{\pm1}]$ is the integral group ring of the infinite cyclic group. We will write~$\bbS[\rmA^{\pm1}]$ for the spherical group ring of the infinite cyclic group. Spherical monoid rings in general are discussed in~\cite[1.11]{Schwede:Topology}, for instance. The ring spectrum~$\bbS[\rmA^{\pm1}]$ has a universal property: homotopy classes of maps of associative ring spectra~$\bbS[\rmA^{\pm1}] \to R$ are in bijective correspondence with units of the~(discrete) ring~$\pi_0R$.

\begin{theorem}\label{thm:quandlespectrum}
There is an equivalence 
\[
\Quandles_\star^\st\simeq\bbS[\rmA^{\pm1}]
\]
of associative ring spectra.
\end{theorem}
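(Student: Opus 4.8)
The plan is to identify the ring spectrum $\Quandles_\star^\st$ by computing the stabilization of the monad $\FQ_\star$ and recognizing the result as the spherical group ring $\bbS[\rmA^{\pm1}]$. By Schwede's construction (as recalled in the excerpt), $\Quandles_\star^\st$ is the symmetric ring spectrum whose $n$-th space is $\FQ_\star(\rmS^n)$, with structure maps assembled from the monad structure. So the heart of the argument is to determine the homotopy type of the spaces $\FQ_\star(\rmS^n)$ and the behavior of the assembly maps.

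First I would invoke Theorem~\ref{thm:free_pointed_quandle}, which identifies $\FQ_\star(X,p)$ with the homotopy fiber of $X\times\rmS^1\to X\times\rmS^1\!/X$, pointed appropriately. Specializing to spheres $X=\rmS^n$, I would use Theorem~\ref{thm:FQtype}: the underlying space of $\FQ(\rmS^n)$ is the homotopy fiber of $\rmS^n\times\rmS^1\to \rmS^n\times\rmS^1\!/\rmS^n\cong\Sigma\rmS^n_+$. Since $\rmS^n$ is connected for $n\geq1$, the quotient $\Sigma\rmS^n_+$ is simply connected, and a short analysis of the long exact sequence of the fibration—together with the retraction $\Sigma\rmS^n_+\to\rmS^1$ noted in the remarks—should show that the homotopy fiber splits, up to homotopy, as a copy of $\rmS^1$ (arising from the $\rmS^1$ factor) wedged or producted with a contractible-up-to-connectivity piece. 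The upshot I expect is that $\FQ_\star(\rmS^n)$ is, stably, a free object on the single $0$-cell recording the generator $\rmA$, i.e.\ that the Segal/assembly structure reproduces exactly the spherical group ring of $\bbZ$.

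Concretely, I would argue that the stabilization $\colim_k\Omega^k\FQ_\star(\Sigma^k(-))$ applied to $\rmS^0$ (or read off from the spectrum $n\mapsto\FQ_\star(\rmS^n)$) has homotopy groups given by $\pi_0=\bbZ[\rmA^{\pm1}]$ in degree zero, matching Corollary~\ref{cor:Qstar} via the isomorphism~\eqref{eq:pi0iso}, and free $\bbS$-module behavior in higher degrees. The cleanest route is to exhibit a map of ring spectra $\bbS[\rmA^{\pm1}]\to\Quandles_\star^\st$ using the universal property of $\bbS[\rmA^{\pm1}]$ stated just above the theorem: the class $\rmA\in\pi_0\Quandles_\star^\st\cong\bbZ[\rmA^{\pm1}]$ is a unit, so it determines such a map, and then I would check this map is an equivalence by verifying it is an isomorphism on all homotopy groups.

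The main obstacle, I expect, is controlling the higher homotopy of $\FQ_\star(\rmS^n)$ and proving that the spectrum $\Quandles_\star^\st$ has no homotopy beyond what $\bbS[\rmA^{\pm1}]$ contributes—that is, showing the candidate equivalence is surjective \emph{and} injective on $\pi_*$ rather than merely on $\pi_0$. The homotopy fiber in Theorem~\ref{thm:FQtype} could a priori carry extra unstable homotopy coming from the interaction of $\rmS^n$ and $\rmS^1$, and one must confirm these contributions die in the colimit defining stabilization (or organize into the free $\bbS$-summands generated by powers of $\rmA$). I would handle this by exploiting the connectivity of the spaces involved to apply a Milnor-type splitting of the homotopy fiber stably, reducing everything to the $\rmS^1$-factor that geometrically represents the invertible generator $\rmA$; the retraction to $\rmS^1$ should be the key tool that forces the extra pieces to be stably trivial. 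Once the homotopy groups match in every degree and the ring structure is compatible (which follows from naturality of the monad multiplication under the assembly maps), the equivalence of associative ring spectra follows.
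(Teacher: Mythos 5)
Your overall strategy---compute $\FQ_\star(\rmS^n)$ from Theorem~\ref{thm:free_pointed_quandle}, stabilize, and then combine the universal property of $\bbS[\rmA^{\pm1}]$ with the isomorphism $\pi_0\Quandles_\star^\st\cong\bbZ[\rmA^{\pm1}]$---is exactly the paper's. But there is a concrete error at the central step. The claim that $\rmS^n\times\rmS^1/\rmS^n\cong\Sigma\rmS^n_+$ is simply connected is false: this space is $\rmS^{n+1}\vee\rmS^1$, with fundamental group $\bbZ$. This is not a side issue; it is where the answer comes from. The paper handles it with Lemma~\ref{lem:universal_covers}: for $n\geqslant2$ the map $\rmS^n\times\rmS^1\to\rmS^{n+1}\vee\rmS^1$ is an isomorphism on fundamental groups, so one may pass to universal covers, where the total space becomes $\rmS^n\times\mathbb{R}\simeq\rmS^n$ and the base becomes an infinite wedge $\bigvee_\bbZ\rmS^{n+1}$, one sphere for each integer. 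The resulting fiber sequence $\FQ_\star(\rmS^n)\to\rmS^n\to\bigvee_\bbZ\rmS^{n+1}$ has null right-hand map, hence splits, and stabilizing yields $\FQ_\star^\st(\rmS^0)\simeq\rmQ(\rmS^0)\times\rmQ\bigl(\bigvee_\bbZ\rmS^0\bigr)$, an infinite wedge of $0$-spheres indexed essentially by the powers of $\rmA$. Your description of the fiber as ``a copy of $\rmS^1$ \dots{} with a contractible-up-to-connectivity piece'' is therefore not correct: the $\rmS^1$ maps isomorphically on $\pi_1$ to the base and does not survive into the fiber; what survives is the $\bbZ$-indexed family coming from the deck transformations, and this is precisely the additive basis $\{\rmA^k\}$ of $\bbZ[\rmA^{\pm1}]$. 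An argument run with a simply connected base would produce a fiber far too small to account for the full Laurent polynomial ring on $\pi_0$.

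Relatedly, your closing step ``verify the map is an isomorphism on all homotopy groups'' cannot be carried out directly, since those groups involve the stable homotopy groups of spheres. The paper's resolution is structural: once $\Quandles_\star^\st$ is known to be additively a wedge of sphere spectra, any map of connective spectra from $\bbS[\rmA^{\pm1}]$ (also a wedge of sphere spectra) inducing an isomorphism on $\pi_0=\rmH_0$ is an $\rmH\bbZ$-homology isomorphism and hence an equivalence. So the ingredient missing from your write-up is exactly the wedge-of-spheres identification, which in turn depends on the universal-cover step you skipped.
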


We already know from Corollary~\ref{cor:Qstar} and~\eqref{eq:pi0iso} that there is an isomorphism
\[
\pi_0\Quandles_\star^\st\cong
\Quandles_\star^\ab\cong
\bbZ[\rmA^{\pm1}]
\]
of rings. This will be used in the proof. Another ingredient is the following.

\begin{lemma}\label{lem:universal_covers}
Let~$p\colon E\to B$ be a map between connected spaces, and let~$\widetilde p\colon\widetilde E\to\widetilde B$ be the induced map between their universal covers. If~$p$ induces an isomorphism between fundamental groups, then the homotopy fibers of~$p$ and~$\widetilde p$ are canonically equivalent.
\end{lemma}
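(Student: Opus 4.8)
The plan is to exhibit the commutative square
\[
\xymatrix{
\widetilde E\ar[r]^-{\widetilde p}\ar[d]&\widetilde B\ar[d]\\
E\ar[r]^-{p}&B
}
\]
whose vertical arrows are the universal covering projections~$q_E$ and~$q_B$, as a homotopy pullback, and then to read off the asserted equivalence from its horizontal homotopy fibers. To begin, I would fix a base-point~$e_0\in E$, put~$b_0=p(e_0)$, and choose lifts~$\widetilde e_0,\widetilde b_0$ with~$\widetilde p(\widetilde e_0)=\widetilde b_0$; the map~$\widetilde p$ exists and is determined by this choice because~$\widetilde E$ is simply connected, independently of the hypothesis on~$\pi_1$.

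The hypothesis enters when comparing the vertical fibers. A universal covering projection is a fibration whose fiber~$q_B^{-1}(b_0)$ over~$b_0$ is a torsor under~$\pi_1(B,b_0)$, and similarly for~$E$. The restriction of~$\widetilde p$ to the fibers is the map~$q_E^{-1}(e_0)\to q_B^{-1}(b_0)$ induced by~$p$, which under the chosen base-point lifts is exactly~$p_*\colon\pi_1(E,e_0)\to\pi_1(B,b_0)$. By hypothesis this is an isomorphism, so the induced map on the (discrete) vertical fibers is a bijection, hence a weak equivalence. Since~$E$ and~$B$ are connected, it suffices to verify this comparison over the single base-point~$b_0$, and the standard criterion---a map of fibrations inducing an equivalence on fibers is a homotopy pullback square~\cite{Goerss+Jardine}---shows that the square above is a homotopy pullback.

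It remains to pass from the square to the horizontal homotopy fibers. Transposing the square (which preserves homotopy pullbacks) and pasting it with the defining square of~$\hofib(\widetilde p)$ gives
\[
\xymatrix{
\hofib(\widetilde p)\ar[r]\ar[d]&\widetilde E\ar[r]^-{q_E}\ar[d]_-{\widetilde p}&E\ar[d]^-{p}\\
\{\widetilde b_0\}\ar[r]&\widetilde B\ar[r]^-{q_B}&B
}
\]
in which the right-hand square is the homotopy pullback just established and the left-hand square is the homotopy fiber of~$\widetilde p$ over~$\widetilde b_0$. By the pasting lemma for homotopy pullbacks the outer rectangle is again a homotopy pullback; since its lower composite sends~$\widetilde b_0$ to~$b_0$, it identifies~$\hofib(\widetilde p)$ with the homotopy fiber of~$p$ over~$b_0$. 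The identifying map is the one induced by the covering projection~$q_E$, which is what makes the equivalence canonical and natural in~$p$.

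I expect the main obstacle to be the second step: correctly recognizing the square as a homotopy pullback via the fiber comparison. Everything hinges on the observation that the covering maps contribute only discrete fibers, so that the~$\pi_1$-isomorphism is precisely the condition making the map of these fibers a weak equivalence; once this is in place, the passage to horizontal fibers is the formal pasting argument above.
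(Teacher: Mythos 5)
Your proof is correct and follows essentially the same route as the paper's: both exhibit the square of covering projections, use the $\pi_1$--isomorphism to see that the map of (discrete) vertical fibers is an equivalence, conclude that the square is a homotopy pullback, and then pass to the horizontal homotopy fibers. The extra detail you supply (identifying the vertical fibers with $\pi_1$--torsors and spelling out the pasting of pullback squares) is exactly what the paper leaves implicit.
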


\begin{proof}[Proof of Lemma~\ref{lem:universal_covers}]
Consider the commutative diagram
\[
\xymatrix{
\widetilde E\ar[r]^{\widetilde p}\ar[d] &\widetilde B\ar[d]\\
E\ar[r]_p & B.
}
\]
The vertical maps are coverings, and by assumption, the induced map between their~(homotopy) fibers is a equivalence. It follows that the diagram is a homotopy pullback square. Then the canonical map between the horizontal homotopy fibers is an equivalence as well.
\end{proof}

\begin{proof}[Proof of Theorem~\ref{thm:quandlespectrum}] 
To work out the homotopy type of~$\FQ_\star(\rmS^n)$, recall from Theorem~\ref{thm:free_pointed_quandle} that there is a homotopy fiber sequence
\[
\FQ_\star(\rmS^n)\longrightarrow\rmS^n\times\rmS^1\longrightarrow\rmS^n\times\rmS^1\!/\rmS^n\simeq\rmS^{n+1}\vee\rmS^1.
\]
For~$n\geqslant 2$, the map on the right hand side induces an isomorphism on fundamental groups. It follows from Lemma~\ref{lem:universal_covers} that, after passing to universal covers, there is a homotopy fiber sequence
\[
\FQ_\star(\rmS^n)\longrightarrow\rmS^n\longrightarrow\bigvee_\bbZ\rmS^{n+1}.
\]
Since the map on the right hand side is necessarily null, we have a split fibration sequence
\[
\Omega\bigvee_\bbZ\rmS^{n+1}\longrightarrow\FQ_\star(\rmS^n)\longrightarrow\rmS^n,
\]
and therefore
\[
\Omega^{n+1}\bigvee_\bbZ\rmS^{n+1}\longrightarrow\Omega^n\FQ_\star(\rmS^n)\longrightarrow\Omega^n\rmS^n.
\]
Note that the map from~$\bigvee_J\rmS^k$ to~$\prod_J\rmS^k$ is~$(2k-1)$--connected. Therefore,
passing to the colimit, we get a split fibration sequence
\begin{equation}\label{eq:pi0}
\rmQ\left(\bigvee_\bbZ\rmS^0\right)\longrightarrow\FQ_\star^\st(\rmS^0)\longrightarrow\rmQ(\rmS^0),
\end{equation}
of infinite loop spaces, with~$\rmQ X=\colim_k\Omega^k\Sigma^k X$ as usual. It follows that~$\FQ_\star^\st$ is an infinite wedge of sphere spectra. 

Using the isomorphism~$\pi_0\Quandles_\star^\st\cong\bbZ[\rmA^{\pm1}]$ of rings, we see that there is a canonical map~\hbox{$\bbS[\rmA^{\pm1}]\to\FQ_\star^\st$} of ring spectra that is an isomorphism on~$\pi_0$. Both sides are wedges of sphere spectra, so this morphism is automatically an equivalence.
\end{proof}

\begin{corollary}\label{cor:base_change_quandles}
There is an equivalence
\[
\rmH\bbZ\wedge\Quandles_\star^\st\simeq\rmH\Quandles_\star^\ab
\]
of ring spectra.
\end{corollary}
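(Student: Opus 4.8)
The plan is to deduce this from the explicit identification $\Quandles_\star^\st\simeq\bbS[\rmA^{\pm1}]$ of Theorem~\ref{thm:quandlespectrum} together with the standard behaviour of the Eilenberg--Mac Lane functor under base change along the unit map $\bbS\to\rmH\bbZ$. Recall from Section~\ref{sec:quandle_stab} that the spherical group ring $\bbS[\rmA^{\pm1}]$ is, as a spectrum, the suspension spectrum $\Sigma^\infty_+\bbZ$ of the discrete infinite cyclic group, with multiplication induced by the group law; in particular it is the wedge $\bigvee_\bbZ\bbS$ of one $0$--sphere for each power of $\rmA$.

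First I would smash with $\rmH\bbZ$. Since smashing commutes with wedges, $\rmH\bbZ\wedge\bbS[\rmA^{\pm1}]\simeq\bigvee_\bbZ\rmH\bbZ$, whose homotopy groups compute the reduced integral homology of the discrete pointed set $\bbZ_+$, namely the free abelian group $\bbZ[\rmA^{\pm1}]$ concentrated in degree $0$. An $\rmH\bbZ$--algebra whose homotopy is concentrated in degree $0$ is equivalent, as a ring spectrum, to the Eilenberg--Mac Lane spectrum of its $\pi_0$; hence $\rmH\bbZ\wedge\bbS[\rmA^{\pm1}]\simeq\rmH(\bbZ[\rmA^{\pm1}])$. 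Finally, Corollary~\ref{cor:Qstar} identifies $\bbZ[\rmA^{\pm1}]$ with $\Quandles_\star^\ab$, giving the asserted equivalence $\rmH\bbZ\wedge\Quandles_\star^\st\simeq\rmH\Quandles_\star^\ab$.

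The only point requiring genuine care — and what I would regard as the substance of the argument — is that the equivalence should be one of \emph{ring} spectra, not merely of spectra. For this I would use that $\rmH\bbZ\wedge(-)$ is a symmetric monoidal functor (base change along the map of commutative ring spectra $\bbS\to\rmH\bbZ$), so it carries the associative ring structure of $\bbS[\rmA^{\pm1}]$ to a ring structure on $\rmH\bbZ\wedge\bbS[\rmA^{\pm1}]$. On $\pi_0$ this induced multiplication is precisely the convolution product on $\bbZ[\rmA^{\pm1}]=\bbZ[\bbZ]$ coming from addition in $\bbZ$, that is, the group-ring multiplication sending $\rmA^i\cdot\rmA^j$ to $\rmA^{i+j}$, which is exactly the ring structure on $\rmH\Quandles_\star^\ab$ under Corollary~\ref{cor:Qstar}. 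Since both source and target have homotopy concentrated in degree $0$, matching the ring structures on $\pi_0$ suffices to match them as ring spectra, completing the proof.
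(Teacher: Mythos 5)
Your proposal is correct and follows essentially the same route as the paper: both arguments reduce to the observation that $\Quandles_\star^\st$ is additively a wedge of $0$--spheres, so that $\rmH\bbZ\wedge\Quandles_\star^\st$ is an $\rmH\bbZ$--algebra with homotopy concentrated in degree $0$ and hence the Eilenberg--Mac\,Lane ring spectrum of its $\pi_0\cong\bbZ[\rmA^{\pm1}]\cong\Quandles_\star^\ab$. The only cosmetic difference is that you route through the full multiplicative identification $\Quandles_\star^\st\simeq\bbS[\rmA^{\pm1}]$ and verify the ring structure on $\pi_0$ explicitly, whereas the paper phrases the same argument for any ring spectrum that is additively a wedge of spheres and leaves that verification implicit.
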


\begin{proof}
For any ring spectrum~$W$ that is additively a wedge of spheres, the smash product~$\rmH\bbZ\wedge W$ is additively a wedge of~$\rmH\bbZ$'s. In fact, it is~$\rmH\pi_0W$. The equivalence now follows from the computation of~$\pi_0\Quandles_\star^\st$, once again.
\end{proof}

\begin{remark}
It is worth pointing out that the property~$\rmH\bbZ\wedge T^\st\simeq\rmH T^\ab$ of Corollary~\ref{cor:base_change_quandles} is{\it~not} satisfied by{\it~all} theories~$T$: for example, it fails for the theory of abelian groups. Only the weaker property~$T^\ab\cong\pi_0T^\st$ holds in full generality, as we remarked in~\eqref{eq:pi0iso}.
\end{remark}

\begin{remark}
Theorem~\ref{thm:quandlespectrum} implies immediately that the (stable) homotopy groups of the spectrum~$\Quandles_\star^\st$ are given as 
\[
\pi_n\Quandles_\star^\st\cong
\pi_n\bbS[\rmA^{\pm1}]\cong
\pi_n\bbS\otimes_\bbZ\bbZ[\rmA^{\pm1}].
\]
This describes the ring of stable homotopy operations in pointed racks as in Remark~\ref{rem:stable_homotopy_operations}.
\end{remark}


\subsection{Stabilization for racks}

We can now turn to the stabilization of the free pointed rack functor~$\FR_\star$ and the determination of the associated ring spectrum~$\Racks_\star^\st$. We already know from~\eqref{eq:pi0iso} that there exists an isomorphism~\hbox{$\pi_0\Racks_\star^\st\cong\bbZ[\rmA^{\pm},\rmE]/(\rmE^2-\rmE(1-\rmA))$} of rings.

\begin{proposition}\label{prop:rackspectrum}
The spectrum~$\Racks_\star^\st$ has the homotopy type of a wedge of~$0$--spheres.
\end{proposition}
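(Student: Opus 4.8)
The plan is to follow the proof of Theorem~\ref{thm:quandlespectrum} nearly verbatim, substituting the free pointed rack for the free pointed quandle and tracking the single structural difference that appears. The starting point is Theorem~\ref{thm:free_pointed_rack}, which presents~$\FR_\star(\rmS^n)$ as the homotopy fiber of the map~$\rmS^n\vee\rmS^1\to\rmS^n\times\rmS^1\!/\rmS^n$. As in the quandle case the target is~$\Sigma\rmS^n_+\simeq\rmS^{n+1}\vee\rmS^1$, so for~$n\geqslant2$ there is a homotopy fiber sequence
\[
\FR_\star(\rmS^n)\longrightarrow\rmS^n\vee\rmS^1\longrightarrow\rmS^{n+1}\vee\rmS^1.
\]
This rack map factors as the inclusion~$\rmS^n\vee\rmS^1\hookrightarrow\rmS^n\times\rmS^1$ followed by the quandle map~$\rmS^n\times\rmS^1\to\rmS^{n+1}\vee\rmS^1$. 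For~$n\geqslant2$ each factor is an isomorphism on fundamental groups---the inclusion because the wedge circle generates~$\pi_1(\rmS^n\times\rmS^1)\cong\bbZ$, and the quandle map by the computation in the proof of Theorem~\ref{thm:quandlespectrum}---so the composite is as well, and Lemma~\ref{lem:universal_covers} permits passage to universal covers without altering the homotopy fiber.

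The one genuine deviation from the quandle argument occurs at this point. The universal cover of the \emph{product}~$\rmS^n\times\rmS^1$ is~$\rmS^n\times\bbZ$-many translates, homotopy equivalent to~$\rmS^n$, whereas the universal cover of the \emph{wedge}~$\rmS^n\vee\rmS^1$ unrolls the circle into a contractible line carrying a copy of the simply connected sphere~$\rmS^n$ over each integer, so it is homotopy equivalent to~$\bigvee_\bbZ\rmS^n$; the cover of the base remains~$\bigvee_\bbZ\rmS^{n+1}$. Since the original map collapses the~$\rmS^n$--summand of the wedge to the basepoint, the induced map on covers carries each summand~$\rmS^n$ to a point, and as~$\bigvee_\bbZ\rmS^{n+1}$ is~$n$--connected this map is null. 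The homotopy fiber therefore splits, yielding a split homotopy fiber sequence
\[
\Omega\bigvee_\bbZ\rmS^{n+1}\longrightarrow\FR_\star(\rmS^n)\longrightarrow\bigvee_\bbZ\rmS^n.
\]

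From here the stabilization runs exactly as for quandles. Applying~$\Omega^n$ gives
\[
\Omega^{n+1}\bigvee_\bbZ\rmS^{n+1}\longrightarrow\Omega^n\FR_\star(\rmS^n)\longrightarrow\Omega^n\bigvee_\bbZ\rmS^n,
\]
and, using that the maps~$\bigvee_J\rmS^k\to\prod_J\rmS^k$ are~$(2k-1)$--connected, the colimit over~$n$ produces a split fibration sequence of infinite loop spaces
\[
\rmQ\left(\bigvee_\bbZ\rmS^0\right)\longrightarrow\Racks_\star^\st\longrightarrow\rmQ\left(\bigvee_\bbZ\rmS^0\right).
\]
Both the fiber and the base are wedges of sphere spectra and the sequence splits, so~$\Racks_\star^\st$ is their product, and hence again a wedge of~$0$--spheres. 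The only step that demands real care---and the sole substantive difference from the quandle computation, where the total space of the looped fibration was a single sphere---is the identification of the universal cover of~$\rmS^n\vee\rmS^1$ as a countable wedge of~$n$--spheres; granted this, the nullity of the map on covers and the resulting splitting follow immediately from connectivity.
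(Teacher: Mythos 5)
Your proposal is correct and follows the paper's own argument essentially verbatim: the same fiber sequence from Theorem~\ref{thm:free_pointed_rack}, the same passage to universal covers via Lemma~\ref{lem:universal_covers} identifying the total space as $\bigvee_\bbZ\rmS^n$ over the base $\bigvee_\bbZ\rmS^{n+1}$, the same nullity-and-splitting step, and the same stabilization to $\rmQ\bigl(\bigvee_\bbZ\rmS^0\bigr)\times\rmQ\bigl(\bigvee_\bbZ\rmS^0\bigr)$. The only difference is that you spell out the $\pi_1$--isomorphism and the identification of the universal cover of $\rmS^n\vee\rmS^1$, which the paper leaves implicit.
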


\begin{proof}
To work out the homotopy type of the space~$\FR_\star(\rmS^n)$, recall from Theorem~\ref{thm:free_pointed_rack} that there is a homotopy fiber sequence
\[
\FR_\star(\rmS^n)\longrightarrow\rmS^n\vee\rmS^1\longrightarrow\rmS^n\times\rmS^1\!/\rmS^n\simeq\rmS^{n+1}\vee\rmS^1.
\]
For~$n\geqslant 2$, the map on the right hand side induces an isomorphism on fundamental groups. It follows from Lemma~\ref{lem:universal_covers} that, after passing to universal covers, there is a homotopy fiber sequence
\[
\FR_\star(\rmS^n)\longrightarrow\bigvee_\bbZ\rmS^n\longrightarrow\bigvee_\bbZ\rmS^{n+1}.
\]
Since the map on the right hand side is necessarily null, this shows
\[
\FR_\star(\rmS^n)\simeq
\bigvee_\bbZ\rmS^n\times\Omega\left(\bigvee_\bbZ\rmS^{n+1}\right),
\]
and therefore
\[
\Omega^n\FR_\star(\rmS^n)\simeq
\Omega^n\left(\bigvee_\bbZ\rmS^n\right)\times\Omega^{n+1}\left(\bigvee_\bbZ\rmS^{n+1}\right).
\]
Passing to the colimit, we get
\[
\FR_\star^\st(\rmS^0)
\simeq\rmQ\left(\bigvee_\bbZ\rmS^0\right)\times\rmQ\left(\bigvee_\bbZ\rmS^0\right),
\]
as in the proof of Theorem~\ref{thm:quandlespectrum}.
\end{proof}

We argue as in the proof of~\ref{cor:base_change_racks} to get the following consequence.

\begin{corollary}\label{cor:base_change_racks}
There is an equivalence
\[
\rmH\bbZ\wedge\Racks_\star^\st\simeq\rmH\Racks_\star^\ab
\]
of ring spectra.
\end{corollary}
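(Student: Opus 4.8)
The plan is to follow the exact template already established in the proof of Corollary~\ref{cor:base_change_quandles}, since Proposition~\ref{prop:rackspectrum} has just supplied the one structural fact that made that argument work: the spectrum~$\Racks_\star^\st$ is additively a wedge of~$0$--spheres. The corollary is really a formal consequence of that wedge-of-spheres structure together with the computation of~$\pi_0$, so almost no new work is required.

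First I would recall the general principle used before: for any ring spectrum~$W$ whose underlying spectrum is a wedge of~$0$--spheres, smashing with the integral Eilenberg--Mac Lane spectrum~$\rmH\bbZ$ yields a wedge of copies of~$\rmH\bbZ$, and in fact~\hbox{$\rmH\bbZ\wedge W\simeq\rmH\pi_0W$}. This is because~$\rmH\bbZ\wedge\bbS\simeq\rmH\bbZ$ and the smash product commutes with the relevant coproducts, so the homotopy groups are concentrated in degree~$0$ where they compute~$\pi_0W$. The key point is that Proposition~\ref{prop:rackspectrum} guarantees~$\Racks_\star^\st$ is such a~$W$.

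Next I would apply this with~\hbox{$W=\Racks_\star^\st$}, concluding that~\hbox{$\rmH\bbZ\wedge\Racks_\star^\st\simeq\rmH\pi_0\Racks_\star^\st$}. Finally I would invoke the isomorphism~\hbox{$\pi_0\Racks_\star^\st\cong\Racks^\ab$} recorded at the start of this subsection, which comes from~\eqref{eq:pi0iso} together with Proposition~\ref{prop:pointedAlexander} identifying the abelianization of the pointed theory with that of the unpointed theory. Substituting this into the previous equivalence gives~\hbox{$\rmH\bbZ\wedge\Racks_\star^\st\simeq\rmH\Racks_\star^\ab$}, as claimed.

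There is no genuine obstacle here: the entire content is the wedge-of-spheres computation of Proposition~\ref{prop:rackspectrum}, and once that is granted the corollary is purely formal. The only point demanding mild care is that the equivalence should be one of \emph{ring} spectra, not merely of spectra, so I would note that the multiplicative structure is respected because the map~$\bbS\to\rmH\bbZ$ is a map of (commutative) ring spectra and smashing is monoidal; the induced comparison on~$\pi_0$ is the ring map appearing in Remark~\ref{rem:stable_homotopy_operations}. I would also flag the evident typo in the sentence preceding the statement, which refers circularly to Corollary~\ref{cor:base_change_racks} itself; the intended reference is to the proof of Corollary~\ref{cor:base_change_quandles}.
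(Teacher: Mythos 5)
Your proposal is correct and matches the paper's own argument, which simply says to argue as in the proof of Corollary~\ref{cor:base_change_quandles}: since Proposition~\ref{prop:rackspectrum} shows~$\Racks_\star^\st$ is additively a wedge of~$0$--spheres, one gets~$\rmH\bbZ\wedge\Racks_\star^\st\simeq\rmH\pi_0\Racks_\star^\st\simeq\rmH\Racks_\star^\ab$ exactly as you describe. You are also right that the reference in the sentence preceding the corollary is a circular typo for Corollary~\ref{cor:base_change_quandles}.
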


In contrast to the case of~$\Quandles_\star^\st$ in Section~\ref{sec:quandle_stab}, the preceding Proposition~\ref{prop:rackspectrum} does not immediately allow us to identify~$\Racks_\star^\st$ as an associative~{\it ring} spectrum. We will eventually be able to achieve that identification, but our approach relies on the stabilization of the free automorphic rack functor~$\FR_{\mathrm{fixed}}$ and the associated ring spectrum~$\Racks_{\mathrm{fixed}}^\st$. We already know from~\eqref{eq:abelian_fixed} that there is an isomorphism~\hbox{$\pi_0\Racks_\star^\st\cong\bbZ[\rmA^{\pm}]$} of rings.

\begin{theorem}\label{thm:fixedrackspectrum}
There is an equivalence 
\[
\Racks_{\mathrm{fixed}}^\st\simeq\bbS[\rmA^{\pm1}]
\]
of associative ring spectra.
\end{theorem}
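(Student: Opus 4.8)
The plan is to mirror the structure of the proofs of Theorem~\ref{thm:quandlespectrum} and Proposition~\ref{prop:rackspectrum}, working out the homotopy type of the spaces $\FR_{\mathrm{fixed}}(\rmS^n)$ and then stabilizing. The starting point is Theorem~\ref{thm:free_fixed_rack}, which identifies $\FR_{\mathrm{fixed}}(\rmS^n)$ as the homotopy fiber of the projection map $(\rmS^1\times\Sigma\rmS^n)\vee\rmS^n\longrightarrow\rmS^1\times\Sigma\rmS^n$, where $\Sigma\rmS^n\simeq\rmS^{n+1}$. The first step is to analyze this fiber for $n$ large. For $n\geqslant 2$ the base $\rmS^1\times\rmS^{n+1}$ has fundamental group $\bbZ$, and the inclusion of the first wedge factor induces an isomorphism on $\pi_1$, so Lemma~\ref{lem:universal_covers} applies: after passing to universal covers the fiber is unchanged, and we are left with the homotopy fiber of a map $\widetilde{(\rmS^1\times\rmS^{n+1})\vee\rmS^n}\to\rmS^{n+1}$ of simply-connected spaces. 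Since the base is a simply-connected sphere $\rmS^{n+1}$, the projection admits a section up to homotopy and the fiber splits.

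Once the space-level splitting is established, the second step is stabilization exactly as before: apply $\Omega^n$, pass to the colimit over $n$, and use that the natural map $\bigvee_J\rmS^k\to\prod_J\rmS^k$ is $(2k-1)$--connected so that products and wedges agree stably. This will exhibit $\FR_{\mathrm{fixed}}^\st(\rmS^0)$ as an infinite wedge of sphere spectra, giving the additive statement. The third and crucial step is to upgrade this to a \emph{ring} equivalence with $\bbS[\rmA^{\pm1}]$. Here I would invoke the universal property of the spherical Laurent ring recalled just before Theorem~\ref{thm:quandlespectrum}: homotopy classes of associative ring spectrum maps $\bbS[\rmA^{\pm1}]\to R$ correspond to units of $\pi_0 R$. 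Since $\pi_0\Racks_{\mathrm{fixed}}^\st\cong\Racks^\ab_{\mathrm{fixed}}\cong\bbZ[\rmA^{\pm}]$ by~\eqref{eq:abelian_fixed} and~\eqref{eq:pi0iso}, the unit $\rmA$ determines a canonical ring map $\bbS[\rmA^{\pm1}]\to\Racks_{\mathrm{fixed}}^\st$. By construction this map is an isomorphism on $\pi_0$.

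To conclude that the canonical map is an equivalence, the final step is to check it is an equivalence on all homotopy groups. Both the source $\bbS[\rmA^{\pm1}]\simeq\bigvee_\bbZ\bbS$ and the target are additively wedges of $0$--spheres indexed by $\bbZ$, so each $\pi_*$ is a free $\pi_*\bbS$--module on a $\bbZ$-indexed basis. The map is a $\pi_0$--isomorphism and a $\pi_*\bbS$--module map, and tracking how $\rmA$ acts shows it carries the basis of the source bijectively to that of the target; hence it is an isomorphism in every degree and therefore an equivalence. I expect the main obstacle to be this last identification at the ring level: the additive computation is routine given the earlier proofs, but one must verify that the wedge summands of $\Racks_{\mathrm{fixed}}^\st$ are genuinely indexed by the powers of $\rmA$ compatibly with multiplication, so that the $\pi_0$--isomorphism propagates to all higher homotopy. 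The cleanest way to secure this is to note that both ring spectra are obtained by the spherical group-ring construction on the infinite cyclic group, matching the fact that $\FR_{\mathrm{fixed}}(\rmS^n)$ stably sees the group completion $\bbZ\times\rmF(\emptyset)=\bbZ$ from the discrete computation in Theorem~\ref{thm:free_fixed_rack}, so the ring structure is forced to be that of $\bbS[\rmA^{\pm1}]$.
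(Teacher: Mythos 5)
Your proposal follows essentially the same route as the paper's proof: start from Theorem~\ref{thm:free_fixed_rack}, pass to universal covers via Lemma~\ref{lem:universal_covers}, identify the fiber stably with $\bigvee_\bbZ\rmS^n$, and conclude via the universal property of $\bbS[\rmA^{\pm1}]$ together with the fact that a $\pi_0$--isomorphism between connective spectra that are wedges of $0$--spheres is automatically an equivalence (which also disposes of the worry in your last paragraph). The one step you state more loosely is the identification of the fiber of $\rmS^{n+1}\vee\bigl(\bigvee_\bbZ\rmS^n\bigr)\to\rmS^{n+1}$ --- a section does not by itself split a fibration at the space level --- and the paper makes this precise by factoring the map through the $2n$--connected inclusion of the wedge into the product $\rmS^{n+1}\times\bigl(\bigvee_\bbZ\rmS^n\bigr)$, whose projection has fiber exactly $\bigvee_\bbZ\rmS^n$; the connectivity estimate you already invoke in your second step is precisely what completes this.
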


\begin{proof}
  To work out the homotopy type of the space~$\FR_{\mathrm{fixed}}(\rmS^n)$, recall from Theorem~\ref{thm:free_fixed_rack} that there is a homotopy fiber sequence
  \[
    \FR_{\mathrm{fixed}}(\rmS^n) \longrightarrow (\rmS^1 \times \rmS^{n+1}) \vee \rmS^n \longrightarrow \rmS^1 \times \rmS^{n+1}.
  \]
For~$n\geqslant 2$, the map on the right hand side induces an isomorphism on fundamental groups. It follows from Lemma~\ref{lem:universal_covers} that, after passing to universal covers, there is a homotopy fiber sequence
\[
\FR_{\mathrm{fixed}}(\rmS^n)\longrightarrow\rmS^{n+1} \vee \left(\bigvee_\bbZ\rmS^n\right) \longrightarrow \rmS^{n+1}.
\]
The right-hand map factors through the~$2n$--connected map
\[
\rmS^{n+1} \vee \left(\bigvee_\bbZ \rmS^n\right) \longrightarrow \rmS^{n+1} \times \left(\bigvee_\bbZ \rmS^n\right),
\] 
and so there is an approximately~$2n$--connected map
\[
  \FR_{\mathrm{fixed}}(\rmS^n) \longrightarrow \bigvee_\bbZ\rmS^n
\]
to the fiber of the projection from the product. Therefore, applying~$\Omega^n$, we get an approximately~$n$--connected map
\[
\Omega^n\FR_{\mathrm{fixed}}(\rmS^n) \longrightarrow \Omega^n\left(\bigvee_\bbZ\rmS^n\right).
\]
Passing to the colimit, we get
\[
\FR_{\mathrm{fixed}}^\st(\rmS^0)
\simeq\rmQ\left(\bigvee_\bbZ\rmS^0\right).
\]
The resulting ring spectrum is therefore a wedge of~$0$--spheres, and the underlying bottom homotopy ring is~\hbox{$\FR_{\mathrm{fixed}}^\ab\cong \bbZ[A^\pm]$}, and so we can conclude the same way as in the proof of Theorem~\ref{thm:quandlespectrum}.
\end{proof}


\begin{theorem}\label{thm:pullback_of_ring_spectra}
  There is a homotopy pullback diagram
  \[
    \xymatrix{
      \Racks^\st_\star \ar[r] \ar[d] &
      \Quandles^\st_\star \ar[d] \\
      \Racks^\st_{\mathrm{fixed}} \ar[r] &
      \Sets^\st_\star
    }
  \]
   of associative ring spectra.
\end{theorem}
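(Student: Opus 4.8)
The plan is to realize the pullback square of ring spectra as the stabilization of the corresponding pullback of theories established in Remark~\ref{rem:pullback_of_rings}, and then to verify the homotopy-pullback condition by a connectivity count using the free-model computations already in hand.

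First, I would establish the square at the level of ring spectra using functoriality. The stabilization construction $T\mapsto T^\st$ is functorial in morphisms of pointed Lawvere theories, and forgetting the binary operation (while retaining the chosen element) gives morphisms of theories
\[
\Sets_\star \longrightarrow \Quandles_\star,\qquad \Sets_\star\longrightarrow\Racks_{\mathrm{fixed}},\qquad \Sets_\star\longrightarrow\Racks_\star
\]
and compatible maps $\Quandles_\star,\Racks_{\mathrm{fixed}}\to\Sets_\star$ in the other direction; these are exactly the forgetful maps appearing in Remark~\ref{rem:pullback_of_rings}. Applying $(-)^\st$ produces a commuting square of associative ring spectra whose corners are $\Racks_\star^\st$, $\Quandles_\star^\st$, $\Racks_{\mathrm{fixed}}^\st$, and $\Sets_\star^\st$, with both maps out of the Laurent corners induced by $\rmA\mapsto 1$ as claimed. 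Since all four functors $\rmF_T$ are left adjoints commuting with the relevant colimits, each $T^\st$ is a symmetric ring spectrum and the induced maps are maps of ring spectra, so the square is one of ring spectra on the nose.

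Next I would check the homotopy-pullback property, and the cleanest way is to verify it space-levelwise on the free models $\rmF_T(\rmS^n)$ for large $n$ and then stabilize. Here I would use the explicit fiber-sequence descriptions from Theorems~\ref{thm:free_pointed_rack}, \ref{thm:free_fixed_rack}, and~\ref{thm:free_pointed_quandle}. After passing to universal covers (via Lemma~\ref{lem:universal_covers}, valid for $n\ge 2$ since all the relevant maps are isomorphisms on $\pi_1$), the proofs of Theorem~\ref{thm:quandlespectrum}, Proposition~\ref{prop:rackspectrum}, and Theorem~\ref{thm:fixedrackspectrum} identify these spaces, up to high connectivity, with explicit wedges: $\FR_\star(\rmS^n)$ splits as $\bigvee_\bbZ\rmS^n\times\Omega(\bigvee_\bbZ\rmS^{n+1})$, while $\FQ_\star(\rmS^n)$ and $\FR_{\mathrm{fixed}}(\rmS^n)$ each contribute one of the two factors (one $\rmQ(\bigvee_\bbZ\rmS^0)$ after stabilizing). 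The combinatorial content of Proposition~\ref{prop:pullback_of_rings}, rephrased in Remark~\ref{rem:pullback_of_rings}, is precisely that on $\pi_0$ the rack ring $\bbZ[\rmA^{\pm},\rmE]/(\rmE^2-\rmE(1-\rmA))$ is the pullback of $\bbZ[\rmA^{\pm}]\to\bbZ\leftarrow\bbZ[\rmA^{\pm}]$; I would promote this to a statement that the spectrum-level square induces a weak equivalence from $\Racks_\star^\st$ to the homotopy pullback by showing it is an isomorphism on all homotopy groups, comparing the two "extra" wedge factors of $\Racks_\star^\st$ with those of $\Quandles_\star^\st$ and $\Racks_{\mathrm{fixed}}^\st$ over the terminal $\Sets_\star^\st\simeq\bbS$.

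The main obstacle I anticipate is bookkeeping the \emph{ring} structure through the pullback, rather than merely the underlying spectra. Since all four spectra are wedges of $0$--spheres by the preceding results, the underlying-spectrum pullback statement reduces to a $\pi_*$--level check that the square of graded homotopy rings is a pullback; the nontrivial point is that both maps to $\Sets^\st_\star\simeq\bbS$ are $\rmA\mapsto 1$ augmentations, so the homotopy pullback of ring spectra computes as the fiber product of the two copies of $\bbS[\rmA^{\pm1}]\simeq\Racks_{\mathrm{fixed}}^\st\simeq\Quandles_\star^\st$ over $\bbS$. I would argue that the canonical comparison map $\Racks_\star^\st\to\Quandles_\star^\st\times^h_{\Sets_\star^\st}\Racks_{\mathrm{fixed}}^\st$ is a map of ring spectra that is an equivalence because it is so on $\pi_0$ (by Proposition~\ref{prop:pullback_of_rings} and the identification~\eqref{eq:pi0iso}) and both source and target are wedges of $0$--spheres with free $\pi_0$--module structure over the sphere, exactly as in the last step of the proof of Theorem~\ref{thm:quandlespectrum}. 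The delicate part is ensuring the homotopy fiber product on the right is itself again a wedge of $0$--spheres, which follows from the splittings above because the $\rmA\mapsto 1$ maps are surjective on $\pi_0$ and hence fibrations up to homotopy, so no $\Tor$ correction terms in higher degrees intrude.
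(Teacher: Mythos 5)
Your proposal is correct and follows essentially the same route as the paper: build the square by applying stabilization to the forgetful/theory morphisms, observe that all four corners are wedges of $0$--spheres (with $\Sets_\star^\st\simeq\bbS$ by Barratt--Priddy--Quillen), and reduce the homotopy-pullback claim to the $\pi_0$-level statement of Proposition~\ref{prop:pullback_of_rings}. Your closing observation that surjectivity on $\pi_0$ rules out higher correction terms is exactly the paper's point that the $\pi_0$-square is \emph{bi}Cartesian, not merely Cartesian.
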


\begin{proof}
There is a commutative diagram of categories and forgetful functors:
\[
  \xymatrix{
    \Sets\ar[r] \ar[d] & 
    \Racks_{\mathrm{fixed}} \ar[d] \\
    \Quandles_\star \ar[r] &
    \Racks_\star 
  }
\]
Here~$\Racks_{\mathrm{fixed}}$ is the category of racks with a base-point that is a fixed element~$f$~(in the sense of Definition~\ref{def:fix}). We view a pointed set as a trivial pointed rack~(as in Example~\ref{ex:trivial}); this makes it both a pointed quandle and a rack with a fixed element. Each of these forgetful functors has a left adjoint. Stabilization gives rise to a diagram of ring spectra, as in the statement of the theorem. It remains to be shown that this resulting diagram is a homotopy pullback.

The unit~\hbox{$\bbS\to\Sets_\star^\st$} is an equivalence by the Barratt--Priddy--Quillen theorem.
By Theorem~\ref{thm:quandlespectrum},
Proposition~\ref{prop:rackspectrum}, and
Theorem~\ref{thm:fixedrackspectrum}, the other ring spectra are all
homotopy equivalent to wedges of~$0$--spheres as well.
On~$\pi_0 = \rmH_0$,
we get a commutative diagram of rings:
\[
  \xymatrix{
    \bbZ[\rmA^\pm, \rmE] / (\rmE^2 - \rmE(1-\rmA)) \ar[r] \ar[d] &
    \bbZ[\rmA^\pm, \rmE] / (\rmE - 1 + \rmA) \ar[d] \\
    \bbZ[\rmA^\pm, \rmE] / \rmE \ar[r] &
    \bbZ
  }
\]
By Proposition~\ref{prop:pullback_of_rings} and Remark~\ref{rem:pullback_of_rings}, this is a biCartesian
square of abelian groups. Therefore, the map
from~$\Racks^\st_\star$
to the homotopy pullback of the diagram of ring spectra is an
equivalence.
\end{proof}

\begin{remark}
Theorem~\ref{thm:pullback_of_ring_spectra} completely determines the ring spectrum~$\Racks^\st_\star$, up to homotopy equivalence of associative ring spectra. Let us describe how this is possible. First of all, as in the preceding proof, the unit map~\hbox{$\bbS\to\Sets_\star^\st$} is an equivalence of ring spectra. Then, as mentioned at the beginning of Section~\ref{sec:quandle_stab}, the ring spectrum~$\bbS[\rmA^{\pm1}]$ has a universal property; this universal property shows that~$\Racks^\st_{\mathrm{fixed}}$ and~$\Quandles^\st_\star$ are uniquely determined by the images of~$\rmA$, and their maps to~$\Sets^\st$ are determined by the knowledge that~\hbox{$\rmA \mapsto 1$} under both maps. This determines the homotopy pullback~$\Racks^\st_\star$ up to homotopy equivalence of associative ring spectra. By contrast, the knowledge of the coefficient ring~$\pi_0\Racks^\st_\star\cong\bbZ[\rmA^{\pm1}, \rmE] / (\rmE^2 = (1-\rmA)\rmE)$ and the homotopy type~(from Proposition~\ref{prop:rackspectrum}) alone do not seem to determine the multiplicative structure on~$\Racks^\st_\star$. The coefficient ring is homologically too complicated for that. 
\end{remark}


Given any algebraic theory~$T$, the ring~$T^\ab$ does not have to be commutative, such as the theory of~$G$--sets; compare Remark~\ref{rem:comm}. This does happen, though, for the theories of racks and quandles, as reviewed in Section~\ref{sec:abelianization}. A commutative ring structure would naturally arise from a symmetric monoidal structure on the category of~$T$--models that preserves coproducts in each variable. However, there does not appear to be any such symmetric monoidal structure on the categories of racks or quandles. The same applies to the stabilizations~$T^\ab$, see Remark~\ref{rem:comm} again. However, we can find the extra structure on our examples, too, as follows:

\begin{proposition}\label{prop:comm}
The diagram of ring spectra in Theorem~\ref{thm:pullback_of_ring_spectra} can be given the structure of a homotopy pullback diagram of commutative ring spectra. In particular, the associative ring spectra~$\Quandles^\st_\star$ and~$\Racks^\st_\star$ admit compatible commutative ring spectrum structures.
\end{proposition}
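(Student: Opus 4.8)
The plan is to refine the entire cospan underlying Theorem~\ref{thm:pullback_of_ring_spectra} to a cospan of commutative ring spectra and then to form the homotopy pullback there. The key observation is that the spherical group ring~\hbox{$\bbS[\rmA^{\pm1}]=\Sigma^\infty_+\bbZ$} of the discrete \emph{abelian} group~$\bbZ$ is naturally a commutative ring spectrum, as is the sphere~\hbox{$\bbS=\Sigma^\infty_+\star$}, and that the augmentation~\hbox{$\rmA\mapsto1$} is induced by the unique group homomorphism~\hbox{$\bbZ\to\star$}, hence is the underlying map of a morphism of commutative ring spectra. I would carry this out in commutative symmetric ring spectra (equivalently, in~$\rmE_\infty$--rings) rather than in commutative~$\Gamma$--rings: by Remark~\ref{rem:comm} the latter would be obstructed, whereas~$\Sigma^\infty_+\bbZ$ is an honest commutative symmetric ring spectrum. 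This produces a cospan
\[
\bbS[\rmA^{\pm1}]\xrightarrow{\ \rmA\mapsto1\ }\bbS\xleftarrow{\ \rmA\mapsto1\ }\bbS[\rmA^{\pm1}]
\]
of commutative ring spectra.

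Next I would take the homotopy pullback~$P$ of this cospan in commutative ring spectra. The forgetful functor from commutative ring spectra to underlying spectra is right Quillen, hence preserves homotopy limits, so the underlying spectrum of~$P$ is the homotopy pullback of the underlying spectra of the cospan. The forgetful functor from associative ring spectra enjoys the same property, so~$P$, regarded as an associative ring spectrum, is precisely the homotopy pullback of the cospan taken in associative ring spectra.

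It then remains to identify this associative pullback with~$\Racks^\st_\star$. By Theorem~\ref{thm:quandlespectrum}, Theorem~\ref{thm:fixedrackspectrum}, and the Barratt--Priddy--Quillen equivalence~\hbox{$\bbS\simeq\Sets^\st_\star$}, the three corners~$\Quandles^\st_\star$,~$\Sets^\st_\star$, and~$\Racks^\st_{\mathrm{fixed}}$ forming the cospan in Theorem~\ref{thm:pullback_of_ring_spectra} are identified, as associative ring spectra, with the objects of the commutative cospan above; and, as recorded in the remark following that theorem, the universal property of~$\bbS[\rmA^{\pm1}]$ pins down both structure maps by the image of~$\rmA$, so these identifications carry the two maps~\hbox{$\rmA\mapsto1$} onto the two maps of the commutative cospan. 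Hence the two associative cospans are equivalent, their associative homotopy pullbacks agree, and Theorem~\ref{thm:pullback_of_ring_spectra} identifies the latter with~$\Racks^\st_\star$. Consequently~$P$ is a commutative refinement of~$\Racks^\st_\star$ sitting in a homotopy pullback square of commutative ring spectra whose underlying associative square is that of Theorem~\ref{thm:pullback_of_ring_spectra}; in particular~$\Quandles^\st_\star$ (the upper-right corner) and~$\Racks^\st_\star$ (the pullback) acquire compatible commutative structures.

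The main obstacle I anticipate is not the formation of the pullback but the \emph{coherent} lifting of the maps: one must ensure that the entire cospan, and not merely its objects, lifts to a diagram of commutative ring spectra, and that after forgetting it agrees up to homotopy with the~\hbox{$\rmA\mapsto1$} cospan of Theorem~\ref{thm:pullback_of_ring_spectra}. This is exactly where the universal property of the spherical group ring is indispensable: since maps of associative ring spectra out of~$\bbS[\rmA^{\pm1}]$ are determined up to homotopy by the image of the unit~$\rmA$, both forgotten maps are forced to coincide with those appearing in the theorem. A secondary point to keep in mind is the choice of model: the argument must be run in a framework where~$\Sigma^\infty_+\bbZ$ is genuinely commutative and where the relevant forgetful functors preserve homotopy limits, which rules out commutative~$\Gamma$--rings (Remark~\ref{rem:comm}) but is satisfied by commutative symmetric ring spectra or~$\rmE_\infty$--rings.
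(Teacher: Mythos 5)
Your proposal is correct and follows essentially the same route as the paper: both identify the cospan $\bbS[\rmA^{\pm1}]\to\bbS\leftarrow\bbS[\rmA^{\pm1}]$ as $\Sigma^\infty_+$ applied to a map of strictly commutative monoids, hence a cospan of commutative ring spectra, and then form the homotopy pullback there. The extra care you take with the choice of model (symmetric spectra rather than commutative $\Gamma$--rings) and with matching the underlying associative cospan to the one in Theorem~\ref{thm:pullback_of_ring_spectra} via the universal property of the spherical group ring is left implicit in the paper's proof but is exactly the right justification.
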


\begin{proof}
The map~$\bbS[\rmA^{\pm1}] \to \bbS$ of associative ring spectra, sending~$A$ to~$1$, is homotopy equivalent to the one obtained by applying the suspension functor~$\Sigma^\infty_+$ to the map~\hbox{$\bbZ \to \{1\}$} of monoids. This is a map of strictly commutative monoids, which induces on suspension spectra a map of commutative ring spectra. Therefore, the maps
\[
  \Racks^\st_{\mathrm{fixed}} \longrightarrow 
  \Sets_\star^\st \longleftarrow \Quandles^\st_\star
\]
can be realized by maps of commutative ring spectra, as well as the diagram involving the homotopy pullback~$\Racks^\st_\star$.
\end{proof}


Given any algebraic theory~$T$, the category of modules over the ring~$T^\ab$ is equivalent to the category of abelian~$T$--models. Similarly, a pointed algebraic theory~$T$, the category of~$T^\st$--modules is equivalent to the stabilization of the category of simplicial~$T$--models. It is, therefore, of interest to describe the categories of~$T^\st$--modules for the theories~$T$ of pointed racks and quandles, and we can deduce these from the results obtained so far:

\begin{remark}\label{rem:mod_cats}
Given an associative ring spectrum~$A$, an~$A$--module structure on a spectrum~$M$ is equivalent to a map of associative ring spectra~$A \to \mathrm{End}(M)$. By the universal properties of the monoid ring spectra, this immediately shows that modules over~$\Sets_\star^\st$ are spectra and 
modules over~$\Racks^\st_{\mathrm{fixed}}$ or over~$\Quandles^\st_\star$ are equivalent to spectra~$M$ with an auto-equivalence~\hbox{$f_M\colon M \to M$}. For the structure of the category of modules over the ring spectrum~$\Racks^\st_\star$, this is more involved, since we do not know a universal property. Instead, we apply~\cite[Theorem 7.2]{Lurie-DAGIX}. For modules over the ring spectrum~$\Racks^\st_\star$, as described in the pullback diagram in Theorem~\ref{thm:pullback_of_ring_spectra} with the commutative structure from Proposition~\ref{prop:comm}, this yields the following. A module over~$\Racks^\st_\star$ determines, by base extension to the other rings in the diagram, the following data:
\begin{itemize}[noitemsep,topsep=0pt]
\item spectra~$M$ and~$N$;
\item auto-equivalences~$f_M\colon M \to M$ and~$f_N\colon N \to N$; and
\item an equivalence~$\varphi\colon M/(\id-f_M) \to N/(\id-f_N)$ of spectra between the~`homotopy co-invariants:' the cofibers of the maps~$\id-f$ on~$M$ and on~$N$, respectively.
\end{itemize}
In the derived sense, this assignment is fully faithful, and it is an equivalence when restricted to connective modules.
\end{remark}


\subsection{Homology of racks and quandles as the homology of stable homotopy types}

As an application of our computations, we can now prove that the homology of a rack or quandle is isomorphic to the ordinary homology of its stable homotopy type. To make a precise statement, let~$Q$ be any pointed simplicial quandle, replaced by a cofibrant resolution, if necessary, without change of notation. Then we can compute two things. On the one hand, the Quillen homology groups of~$Q$ are given as the homotopy groups of the abelianization~$Q^\ab$ of~$Q$. It is shown in~\cite{Szymik:3} that this agrees with the usual quandle homology groups of~$Q$ when those are defined. On the other hand, there are the homology groups of the stabilization~$Q^\st$ of~$Q$. These are the homotopy groups of the smash product~$\rmH\bbZ\wedge Q^\st$. Our results now imply that these two construction lead to the same result.

\begin{theorem}\label{thm:homology_from_stable_homotopy}
There is an equivalence
\[
Q^\ab\simeq\rmH\bbZ\wedge Q^\st,
\]
of modules over~$\rmH\Quandles_\star^\ab\simeq\rmH\bbZ[\rmA^{\pm1}]$, where~$Q$ is any cofibrant pointed simplicial quandle. 
\end{theorem}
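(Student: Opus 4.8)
The plan is to feed our computation of~$\Quandles_\star^\st$ into Schwede's base-change equivalence~\eqref{eq:homology}. Applying that equivalence to the theory~$T$ of pointed quandles and to the given cofibrant model~$C=Q$ yields a natural equivalence
\[
\rmH\Quandles_\star^\ab\wedge_{\Quandles_\star^\st}^{\mathbb{L}}Q^\st\overset{\simeq}{\longrightarrow}\rmH Q^\ab
\]
of~$\rmH\Quandles_\star^\ab$--module spectra, where~$\rmH Q^\ab$ is the Eilenberg--Mac Lane realization of the simplicial module~$Q^\ab$ and hence may be identified with~$Q^\ab$ itself. By Corollary~\ref{cor:Qstar} the coefficient ring is~$\rmH\Quandles_\star^\ab\simeq\rmH\bbZ[\rmA^{\pm1}]$, so it remains only to identify the left-hand side with~$\rmH\bbZ\wedge Q^\st$.

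The key observation is that the ring map~$\Quandles_\star^\st\to\rmH\Quandles_\star^\ab$ implicit in this base change is nothing but~$\rmH\bbZ$--linearization. Indeed, by Theorem~\ref{thm:quandlespectrum} the spectrum~$\Quandles_\star^\st\simeq\bbS[\rmA^{\pm1}]$ is additively a wedge of spheres, so the proof of Corollary~\ref{cor:base_change_quandles} gives a canonical equivalence
\[
\rmH\bbZ\wedge\Quandles_\star^\st\simeq\rmH\pi_0\Quandles_\star^\st=\rmH\bbZ[\rmA^{\pm1}]
\]
under which the unit-smash map~$\Quandles_\star^\st\to\rmH\bbZ\wedge\Quandles_\star^\st$ becomes exactly the~$1$--connected map~$\Quandles_\star^\st\to\rmH\Quandles_\star^\ab$ of Remark~\ref{rem:stable_homotopy_operations}, via~\eqref{eq:pi0iso}. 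In other words,~$\rmH\bbZ[\rmA^{\pm1}]$ is, as a~$\Quandles_\star^\st$--algebra, the image of the sphere spectrum under base change along the unit~$\bbS\to\rmH\bbZ$.

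Granting this, the conclusion follows from transitivity of the relative smash product: I would compute
\[
\rmH\bbZ[\rmA^{\pm1}]\wedge_{\Quandles_\star^\st}^{\mathbb{L}}Q^\st\simeq(\rmH\bbZ\wedge\Quandles_\star^\st)\wedge_{\Quandles_\star^\st}^{\mathbb{L}}Q^\st\simeq\rmH\bbZ\wedge Q^\st,
\]
and combine with the first displayed equivalence to obtain~$Q^\ab\simeq\rmH\bbZ\wedge Q^\st$ as~$\rmH\bbZ[\rmA^{\pm1}]$--modules.

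The step I expect to require the most care is the middle paragraph: verifying that the abstract linearization map supplied by Schwede's theory genuinely coincides, as a map of ring spectra out of~$\Quandles_\star^\st$, with the concrete~$\rmH\bbZ$--smash identification of Corollary~\ref{cor:base_change_quandles}. This is precisely where the wedge-of-spheres structure from Theorem~\ref{thm:quandlespectrum} is essential, since it forces both maps to be detected on~$\pi_0$, where they agree with the isomorphism~$\pi_0\Quandles_\star^\st\cong\Quandles_\star^\ab$ of~\eqref{eq:pi0iso}. One must also track the module structures throughout so that the final equivalence is one of~$\rmH\bbZ[\rmA^{\pm1}]$--modules rather than merely of underlying spectra; this is automatic once the intermediate equivalences are recognized as~$\rmH\bbZ[\rmA^{\pm1}]$--linear.
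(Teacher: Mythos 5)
Your proposal is correct and follows essentially the same route as the paper: apply Schwede's equivalence~\eqref{eq:homology} to the theory of pointed quandles, identify~$\rmH\Quandles_\star^\ab$ with~$\rmH\bbZ\wedge\Quandles_\star^\st$ via Corollary~\ref{cor:base_change_quandles}, and cancel the relative smash product. Your middle paragraph merely spells out a compatibility check (that the linearization map agrees with the unit-smash map, detected on~$\pi_0$ thanks to the wedge-of-spheres structure) that the paper's proof leaves implicit.
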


\begin{proof}
We have
\[
Q^\ab\simeq\rmH\Quandles_\star^\ab\wedge_{\displaystyle\Quandles_\star^\st}Q^\st
\]
for any cofibrant pointed simplicial quandle~$Q$ because of~\eqref{eq:homology}. By our Corollary~\ref{cor:base_change_quandles}, the right hand side is equivalent to
\[
\rmH\Quandles_\star^\ab\wedge_{\displaystyle\Quandles_\star^\st}Q^\st
\simeq
\rmH\bbZ\wedge\Quandles_\star^\st\wedge_{\displaystyle\Quandles_\star^\st}Q^\st,
\]
which we can cancel to get to~$\rmH\bbZ\wedge Q^\st$, as desired.
\end{proof}

\begin{remark}
A similar result holds for racks:
\[
R^\ab\simeq\rmH\bbZ\wedge R^\st.
\]
A proof uses Corollary~\ref{cor:base_change_racks} instead of Corollary~\ref{cor:base_change_quandles}.
\end{remark}


\section{On the stable homotopy types of knot quandles}\label{sec:knots}

In this final section, we prove that, in general, the stable homotopy type of a knot quandle cannot be computed directly from a Wirtinger presentation.

We step back to explain the difficulty of the problem. A{\it~presentation} of a quandle~$Q$ is a coequalizer diagram
\begin{equation}\label{eq:quandle_presentation}
\xymatrix@1{
\FQ(R)\ar@<+.4ex>[r]\ar@<-.4ex>[r]&
\FQ(S)\ar[r]&Q,
}
\end{equation}
where~$S$ is the set of generators, and~$R$ is the set of relations. The two maps~$\FQ(R)\to\FQ(S)$ send a relation~$r$ of the form~$x_r=y_r$ to~$x_r$ and~$y_r$, respectively.
We also have the functor~$Q\mapsto Q^\gr$ that is left-adjoint to the forgetful functor from groups to quandles. Therefore, it preserves colimits, and coequalizers in particular. We thereby get a presentation
\[
\xymatrix@1{
\rmF(R)\ar@<+.4ex>[r]\ar@<-.4ex>[r]&
\rmF(S)\ar[r]&Q^\gr,
}
\]
of the associated group~$Q^\gr$ by applying the functor to the diagram~\eqref{eq:quandle_presentation}.

\begin{example}\label{ex:Wirtinger}
A knot diagram leads to a presentation of the knot quandle as follows~(see~\cite[Sec.~6]{Joyce} and~\cite[\S5]{Matveev} for detail). The set of generators is the set of arcs in the diagram. The set of relations is given by the crossings in the diagram; every crossing corresponds to a relation of the form~$x\rhd y = z$, where~$x$,~$y$, and~$z$ are the three arcs involved in the crossing. This the{\it~Wirtinger presentation} of the knot quandle. Passage to the associated groups yields the Wirtinger presentation of knot group.
\end{example}

The problem with coequalizers, and more generally colimits, is that they are, in general, not homotopy invariant. Only homotopy colimits are homotopically meaningful. We show how to compute some such homotopy coequalizers first for simplicial groups:

\begin{proposition}\label{prop:p2c}
Given a presentation  
\[
\xymatrix@1{
\rmF(R)\ar@<+.4ex>[r]\ar@<-.4ex>[r]&
\rmF(S)\ar[r]&G,
}
\]
of a discrete group~$G$, the homotopy coequalizer of the diagram
$\xymatrix@1{\rmF(R)\ar@<+.4ex>[r]\ar@<-.4ex>[r]&\rmF(S)}$ is the simplicial loop group of the presentation~$2$--complex.
\end{proposition}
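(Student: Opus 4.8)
The plan is to deduce the statement from Kan's loop group functor $G$ and its right adjoint, the classifying space functor $\overline{W}$. Recall that $G$ sends a reduced simplicial set $Y$ to a free simplicial group modeling the based loop space, that $(G,\overline{W})$ is a Quillen equivalence between reduced simplicial sets and simplicial groups, and that $G$, being left Quillen, preserves homotopy colimits. The key input is that $G$ turns wedges of circles into free groups: writing $\bigvee_X\rmS^1$ for the wedge built from the circle $\rmS^1$ with its unique vertex, this is a reduced model for $K(\rmF(X),1)$, and $G(\bigvee_X\rmS^1)$ is weakly equivalent to the discrete simplicial group $\rmF(X)$. Moreover, a map $\bigvee_R\rmS^1 \to \bigvee_S\rmS^1$ that sends the circle indexed by $r$ to a loop representing a word $w_r \in \rmF(S)$ induces on loop groups the homomorphism $r \mapsto w_r$. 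Choosing loops representing $x_r$ and $y_r$, we obtain a diagram $\bigvee_R\rmS^1 \rightrightarrows \bigvee_S\rmS^1$ whose image under $G$ is weakly equivalent to the given diagram $\rmF(R)\rightrightarrows\rmF(S)$.

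First I would identify the pointed homotopy coequalizer of $\bigvee_R\rmS^1 \rightrightarrows \bigvee_S\rmS^1$, formed in reduced (equivalently, pointed connected) simplicial sets, with the presentation $2$--complex $P$. Modeling this homotopy coequalizer as the reduced double mapping cylinder $\big(\bigvee_S\rmS^1\big)\cup_{A\vee A}\mathrm{Cyl}(A)$, where $A=\bigvee_R\rmS^1$ and $\mathrm{Cyl}(A)=(A\times\Delta^1)/(\ast\times\Delta^1)$ is the reduced cylinder, one computes the cellular structure directly. For a single relation $r$, collapsing the basepoint section of $\rmS^1\times\Delta^1$ removes the vertical $1$--cell and leaves the two boundary circles $a_0,a_1$ together with a single $2$--cell attached along $a_0 a_1^{-1}$; gluing the bottom circle $a_0$ to the loop $x_r$ and the top circle $a_1$ to the loop $y_r$ in $\bigvee_S\rmS^1$ then attaches this $2$--cell along the relator $x_r y_r^{-1}$. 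Carrying this out for every $r$ yields exactly $\bigvee_S\rmS^1$ with a $2$--cell attached along each relator, which is the presentation $2$--complex $P$.

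Finally I would apply $G$. Since $G$ preserves homotopy colimits and carries the wedge diagram to $\rmF(R)\rightrightarrows\rmF(S)$, the homotopy coequalizer of the latter in simplicial groups is
\[
\mathrm{hocoeq}\big(\rmF(R)\rightrightarrows\rmF(S)\big)\;\simeq\;G\big(\mathrm{hocoeq}(\textstyle\bigvee_R\rmS^1\rightrightarrows\bigvee_S\rmS^1)\big)\;\simeq\;G(P),
\]
the simplicial loop group of the presentation $2$--complex, as claimed.

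The step I expect to be the main obstacle is the identification in the second paragraph, and in particular the insistence on the \emph{pointed} (reduced) homotopy coequalizer. The unpointed homotopy coequalizer is a generalized mapping torus --- for the two identity maps of $\rmS^1$ it is the torus rather than $\rmS^1\vee\rmS^2$ --- and in general its fundamental group acquires extra generators $p_r$ coming from the basepoint intervals of the cylinders, with relations $p_r^{-1}x_r p_r = y_r$ in place of $x_r=y_r$. Its loop group would then have $\pi_0$ different from the coequalizer $G$ of the underlying groups, so the unpointed version gives the wrong answer. Working in reduced simplicial sets --- where $G$ is defined and where the basepoint section of each cylinder is collapsed --- is exactly what removes the cells $p_r$ and makes the $2$--cells land on the relators $x_r y_r^{-1}$. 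One must also check that this reduced homotopy colimit is the homotopy colimit preserved by $G$ along the Quillen equivalence; this is unproblematic because every simplicial set in the diagram, and the homotopy coequalizer itself, is connected.
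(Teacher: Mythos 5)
Your proposal is correct and takes essentially the same route as the paper: both pass through the Kan--Quillen equivalence between simplicial groups and reduced simplicial sets, use that classifying spaces of free groups are wedges of circles, and identify the reduced homotopy coequalizer of $\bigvee_R\rmS^1\rightrightarrows\bigvee_S\rmS^1$ with the presentation $2$--complex. Your explicit reduced double-mapping-cylinder computation and the warning about the unpointed coequalizer (the mapping-torus phenomenon) are details the paper leaves implicit, but they do not change the argument.
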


\begin{proof}
We use the Kan--Quillen equivalence between the homotopy theories of simplicial groups and reduced spaces. This equivalence is given by the classifying complex--loop group adjunction. This equivalence immediately gives that the homotopy coequalizer is given as the loop group of the homotopy coequalizer of the diagram~$\xymatrix@1{\rmB\rmF(R)\ar@<+.4ex>[r]\ar@<-.4ex>[r]&\rmB\rmF(S)}$ of reduced spaces. The classifying spaces of free groups are wedges of circles, and this diagram becomes the usual diagram 
\[
\xymatrix@1{
{\displaystyle\bigvee_R}\rmS^1\ar@<+.4ex>[r]\ar@<-.4ex>[r]&
{\displaystyle\bigvee_S}\rmS^1
}
\]
that describes the attaching maps of the~$2$--cells to the~$1$--skeleton in the presentation~$2$--complex.
\end{proof}

If~$H$ denotes the homotopy coequalizer of the diagram in Proposition~\ref{prop:p2c}, we see that the canonical map~$H\to G$ induces an isomorphism on components. This map is an equivalence if and only if the presentation~$2$--complex is aspherical, which happens for knot groups provided that one removes a redundant relation from the Wirtinger presentation. For knot quandles, in contrast, the situation is more difficult, as we now show, after recalling a few facts about the homology of knot quandles.

Let~$K$ be a knot, and let~$\rmQ_K$ be its knot quandle. We need to recall a few facts about the quandle homology~$\rmH_n(Q)$ of~$Q=\rmQ_K$, and we will do this by referring to the usual indexing of quandle homology; Quillen's~$\rmH_n(Q)$ is the usual~$\rmH_{n+1}(Q)$~\cite{Szymik:3}. The first quandle homology group~$\rmH_1(\rmQ_K)$ is easily seen to be infinite cyclic. Eisermann~\cite[Thm.~52]{Eisermann} proved that~$\rmH_2(\rmQ_K)$ is infinite cyclic whenever~$K$ is knotted. If~$K$ is knotted, Nosaka~\cite[Thm.~A.1]{Nosaka:2020} showed that the third quandle homology~$\rmH_3(\rmQ_K)$ is infinite cyclic as well.

\begin{theorem}\label{thm:presentations}
Let~$K$ be a non-trivial knot, and let 
\[
\xymatrix@1{
\FQ(R)\ar@<+.4ex>[r]\ar@<-.4ex>[r]&
\FQ(S)\ar[r]&\rmQ_K
}
\]
be a presentation of its knot quandle~$\rmQ_K$, for instance any Wirtinger presentation from a diagram as in Example~\ref{ex:Wirtinger}. 
Then~$\rmQ_K$ is never the homotopy coequalizer of the diagram 
$\xymatrix@1{\FQ(R)\ar@<+.4ex>[r]\ar@<-.4ex>[r]&\FQ(S)}$.
\end{theorem}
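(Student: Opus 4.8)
The plan is to argue by contradiction using the homology computation machinery developed earlier. Suppose that $\rmQ_K$ were the homotopy coequalizer of the diagram $\xymatrix@1{\FQ(R)\ar@<+.4ex>[r]\ar@<-.4ex>[r]&\FQ(S)}$. Since the free quandle functor $\FQ$ is a left adjoint and therefore its prolongation is compatible with simplicial realizations, the homotopy coequalizer can be computed levelwise, and Theorem~\ref{thm:Milnor} (in the form of Theorem~\ref{thm:FQtype}) identifies the homotopy types of the free simplicial quandles $\FQ(R)$ and $\FQ(S)$ explicitly. Combined with Theorem~\ref{thm:homology_from_stable_homotopy}, which says that the Quillen (quandle) homology of a quandle agrees with the ordinary homology of its stable homotopy type, this would force the quandle homology of $\rmQ_K$ to be computable from a two-term complex built out of the homologies of the free quandles on the discrete sets $R$ and $S$.

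First I would make this precise by writing down the homotopy coequalizer as the realization of a simplicial object whose homotopy type is controlled by the fibration sequence of Theorem~\ref{thm:FQtype}. The key point is that the free quandle $\FQ(X)$ on a \emph{discrete} set $X$ is homotopy discrete---it is just a set of points---so the stable homotopy type of each term $\FQ(R)$ and $\FQ(S)$ is a wedge of $0$-spheres. Consequently, applying $\rmH\bbZ\wedge(-)^\st$ to the putative homotopy coequalizer and invoking Theorem~\ref{thm:homology_from_stable_homotopy}, the homology $\rmH_*(\rmQ_K)$ would be the homology of a complex concentrated in degrees $0$ and $1$; equivalently, $\rmH_n(\rmQ_K)$ would vanish for all $n \geq 2$ in Quillen's indexing, which corresponds to the usual quandle homology vanishing above a low degree.

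The contradiction then comes from the recalled facts about the quandle homology of a knotted $K$: Eisermann's theorem gives $\rmH_2(\rmQ_K)\cong\bbZ$, and Nosaka's theorem gives $\rmH_3(\rmQ_K)\cong\bbZ$, whenever $K$ is non-trivial. At least one of these nonzero higher homology groups sits in a degree where the two-term homotopy coequalizer model predicts vanishing homology, which is the desired contradiction. I would therefore only need to produce a \emph{single} higher-dimensional obstruction, and the existence of nontrivial $\rmH_2$ or $\rmH_3$ furnishes exactly that.

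The main obstacle I anticipate is bookkeeping the degree shift and verifying carefully that the stable homotopy type of the homotopy coequalizer really is supported in the expected low degrees. One must be careful that the homotopy coequalizer of discrete free quandles, while built from homotopy-discrete pieces, could \emph{a priori} acquire higher homotopy through the realization of the bar-type construction; the crucial input is that stabilization sends each $\FQ$ of a discrete set to a wedge of spheres (as in the proof of Theorem~\ref{thm:quandlespectrum}), so that the only surviving homology of the coequalizer comes from the $(\Delta^1)$-direction of the coequalizer diagram, keeping it concentrated below the degree where Eisermann's or Nosaka's classes live. Pinning down this support statement---and matching the two competing indexing conventions for quandle homology---is where the real work lies; once it is in place, the comparison with the nonvanishing of $\rmH_2(\rmQ_K)$ or $\rmH_3(\rmQ_K)$ closes the argument immediately.
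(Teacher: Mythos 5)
Your proposal follows essentially the same route as the paper's proof: stabilize the putative homotopy coequalizer, convert to homology via Theorem~\ref{thm:homology_from_stable_homotopy}, observe that the (stable) homology of free quandles on discrete sets is concentrated in low degrees, and contradict the known nonvanishing of higher quandle homology of a nontrivial knot. One small sharpening: in the resulting long exact sequence Eisermann's $\rmH_2(\rmQ_K)$ (usual indexing) is \emph{not} obstructed---it can appear as the kernel of the induced map on the degree-one homology of the free quandles---so the contradiction must come from Nosaka's $\rmH_3(\rmQ_K)\neq 0$, which is exactly the class the paper uses.
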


\begin{proof}
If the diagram in the statement were a homotopy coequalizer diagram, this property would be preserved by stabilization, and then the diagram
\[
\xymatrix@1{
\FQ(R)^\st\ar@<+.4ex>[r]\ar@<-.4ex>[r]&
\FQ(S)^\st\ar[r]&\rmQ_K^\st
}
\]
would also be a homotopy coequalizer. Using our Theorem~\ref{thm:homology_from_stable_homotopy}, this leads to a long exact sequence in homology, where we can replace the pair of parallel arrows by their difference:
\[
\cdots\longrightarrow
\rmH_n(\FQ(R))\longrightarrow
\rmH_n(\FQ(S))\longrightarrow
\rmH_n(\rmQ_K)\longrightarrow\cdots.
\]
The homology of free quandles is concentrated in degrees at most~$1$ in the usual indexing. Therefore, this contradicts~$\rmH_3(\rmQ_K)\not=0$.
\end{proof}

If~$K$ is the unknot, the quandle~$\rmQ_K$ is free, and it is trivial to present it, even in a homotopically meaningful way. For non-trivial knots, the problem of computing their stable homotopy type is more difficult; it cannot be read off easily from a presentation, as Theorem~\ref{thm:presentations} shows.


\section*{Acknowledgments}

The authors would like to thank the Isaac Newton Institute for Mathematical Sciences, Cambridge, for support and hospitality during the programme `Homotopy harnessing higher structures' where work on this paper was undertaken. This work was supported by~EPSRC grant no~EP/K032208/1.



\vfill

Tyler Lawson, University of Minnesota, Minneapolis, MN, USA\\
\href{mailto:tlawson@math.umn.edu}{tlawson@math.umn.edu}

Markus Szymik, NTNU, Trondheim, NORWAY\\
\href{mailto:markus.szymik@ntnu.no}{markus.szymik@ntnu.no}

\end{document}